\documentclass[11pt]{article}

\usepackage[T1]{fontenc}
\usepackage{lmodern}
\usepackage{microtype}
\usepackage{amsmath,amssymb,mathtools,amsthm}
\usepackage{geometry}
\usepackage{enumitem}
\usepackage{graphicx}
\usepackage{tikz}
\usepackage{pgfplots}
\usepackage{aliascnt}
\usepackage[colorlinks=true,linkcolor=blue!55!black,citecolor=blue!55!black,urlcolor=blue!55!black]{hyperref}
\usepackage[nameinlink,capitalise]{cleveref}
\usepackage{verbatim}

\pgfplotsset{compat=1.18}
\allowdisplaybreaks

\newtheorem{theorem}{Theorem}
\newaliascnt{corollary}{theorem}
\newtheorem{corollary}[corollary]{Corollary}
\aliascntresetthe{corollary}
\newaliascnt{lemma}{theorem}
\newtheorem{lemma}[lemma]{Lemma}
\aliascntresetthe{lemma}
\newaliascnt{proposition}{theorem}
\newtheorem{proposition}[proposition]{Proposition}
\aliascntresetthe{proposition}
\theoremstyle{definition}
\newaliascnt{definition}{theorem}

\aliascntresetthe{definition}
\theoremstyle{remark}
\newaliascnt{remark}{theorem}

\aliascntresetthe{remark}

\crefname{theorem}{Theorem}{Theorems}
\Crefname{theorem}{Theorem}{Theorems}
\crefname{corollary}{Corollary}{Corollaries}
\Crefname{corollary}{Corollary}{Corollaries}
\crefname{lemma}{Lemma}{Lemmas}
\Crefname{lemma}{Lemma}{Lemmas}
\crefname{proposition}{Proposition}{Propositions}
\Crefname{proposition}{Proposition}{Propositions}
\crefname{definition}{Definition}{Definitions}
\Crefname{definition}{Definition}{Definitions}
\crefname{remark}{Remark}{Remarks}
\Crefname{remark}{Remark}{Remarks}

\newcommand{\PG}{\mathrm{PG}}
\newcommand{\Fq}{\mathbb F_q}
\newcommand{\cL}{\mathcal L}

\newcommand{\cG}{\mathcal G}
\newcommand{\cS}{\mathcal S}

\newcommand{\qbinom}[2]{\genfrac{[}{]}{0pt}{}{#1}{#2}_{q}}
\newcommand{\incdev}[2]{\left|I(#1,#2)-q^{-d}|#1|\,|#2|\right|}

\title{A combinatorial approach to point–flat incidences over finite fields}

\author{Ben Lund\thanks{Email: \texttt{lund.ben@gmail.com}.}\quad and Tao Zhang\thanks{Email: \texttt{zhant220@163.com}.}\\
\small Institute of Mathematics and Interdisciplinary Sciences,\\[-1mm]
\small Xidian University, Xi'an 710126, China}
\date{}

\begin{document}
\maketitle

\begin{abstract}
We establish new bounds for incidences between a point set \(P\) and a family \(L\) of \(n\)-flats in \(\operatorname{PG}(n+d,q)\). For fixed dimensions, our bound on the incidence discrepancy has an explicit piecewise-linear exponent in \(\log_q|L|\), improving the classical estimate of Haemers and the Kong–Tamo bound in specified ranges of the number of flats. Matching constructions establish sharpness up to constant factors in several parameter ranges. The proof is combinatorial and avoids spectral and Fourier analytic methods.  As applications, we obtain improved estimates for rich flats and exceptional orthogonal projections, together with stronger lower bounds for Furstenberg sets in certain ranges where the fraction of prescribed directions is small.
\end{abstract}
\section{Introduction}

In this paper, we obtain an improved upper bound on the number of incidences between points and projective flats.
Our bound is relevant when the cardinalities of the point set and the family of flats are large relative to \(q\). It improves on known bounds when the codimension exceeds one and the family of flats is sufficiently small, in the ranges specified below.

We work over a finite field $\mathbb{F}_q$, where $q$ is a power of a prime, and study incidences between points and $n$-flats in $\PG(n+d,q)$.
Throughout the paper, $q$ is the asymptotic parameter, while the
dimensions $n$ and $d$ are treated as constants. We write
\begin{alignat*}{2}
 f(q) &\precsim g(q)
 &\qquad \text{if}\qquad
 &f(q)\leq (1+o(1))g(q), \\
 f(q) &\ll g(q)
 &\qquad \text{if}\qquad
 &f(q)=O(g(q)), \text{ and} \\
 f(q) &\asymp g(q)
 &\qquad \text{if}\qquad
 &f(q)\ll g(q)\text{ and }g(q)\ll f(q).
\end{alignat*}
In all cases, the implied constants may depend on $n$ and $d$, and never on $q$. If $P$
is a set of points and $\cL$ is a set of $n$-flats in
$\PG(n+d,q)$, then $I(P,\cL)$ denotes the number of incident pairs
$(x,\ell)\in P\times\cL$.

\cref{thm:vinh} is the prototypical example of an incidence bound of the type that we study here.

\begin{theorem}\label{thm:vinh}
Let $P$ be a set of points and let $\cL$ be a set of $n$-flats, both in
$\PG(n+d,q)$. Then
\begin{equation}\label{eq:vinh}
 \incdev{P}{\cL}
 \precsim q^{dn/2}|P|^{1/2}|\cL|^{1/2}.
\end{equation}
\end{theorem}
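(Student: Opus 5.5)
This is the expander mixing lemma applied to the point–flat incidence graph. Let $N$ be the $|\text{points}|\times|\text{flats}|$ incidence matrix of points versus all $n$-flats of $\PG(n+d,q)$. Write $\theta_k=\qbinom{k+1}{1}$ for the number of points of a $k$-flat. Then every flat has $r=\theta_n$ points, and every point lies on $k=\qbinom{n+d}{n}$ flats. Two distinct points lie together on $\lambda=\qbinom{n+d-1}{n-1}$ flats. Consequently
\[
NN^T=(k-\lambda)I+\lambda J.
\]
The eigenvalues of $NN^T$ are therefore $rk$, on the all-ones vector, and $k-\lambda$, with multiplicity $\theta_{n+d}-1$.

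The standard singular value argument then applies. Write $1_P=\frac{|P|}{\theta_{n+d}}\mathbf 1+u$ with $u\perp\mathbf 1$. Then
\[
I(P,\cL)=1_P^TN1_{\cL}=\frac{|P|}{\theta_{n+d}}\,r|\cL|+u^TN1_{\cL}.
\]
Since $N^Tu$ lies in the span of the nontrivial right singular vectors, Cauchy–Schwarz gives
\[
|u^TN1_{\cL}|\le \sqrt{k-\lambda}\,\|u\|\,\|1_{\cL}\|\le\sqrt{k-\lambda}\,|P|^{1/2}|\cL|^{1/2}.
\]
Here $\|u\|\le\|1_P\|$. If one wants a cleaner bound, one can also project $1_{\cL}$ onto the complement of the constant vector first, but this is not needed.

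It remains to compare constants. First, $r/\theta_{n+d}=\theta_n/\theta_{n+d}=q^{-d}(1+O(q^{-n-1}))$. Second, $k-\lambda=\qbinom{n+d}{n}-\qbinom{n+d-1}{n-1}=q^n\qbinom{n+d-1}{n}\sim q^{n}q^{n(d-1)}=q^{nd}$, so $\sqrt{k-\lambda}\precsim q^{dn/2}$. The remaining issue is that the main term in the statement is $q^{-d}|P||\cL|$ rather than $\frac{\theta_n}{\theta_{n+d}}|P||\cL|$. The two differ by at most $O(q^{-d-n-1})|P||\cL|$, and we need this error to be $o(q^{dn/2}|P|^{1/2}|\cL|^{1/2})$.

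Using $|P|\le\theta_{n+d}\ll q^{n+d}$ and $|\cL|\ll q^{(n+1)d}$, we get
\[
q^{-d-n-1}|P||\cL|\le q^{-d-n-1}(|P||\cL|)^{1/2}\,q^{(n+d)/2+(n+1)d/2}=q^{dn/2-(n+2)/2}\,(|P||\cL|)^{1/2}.
\]
This is $o(q^{dn/2}|P|^{1/2}|\cL|^{1/2})$, so the $\precsim$ statement follows.

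The main obstacle is only bookkeeping: verifying the value of $\lambda$ and the asymptotics of the Gaussian binomials, and absorbing the discrepancy between the exact density $\theta_n/\theta_{n+d}$ and $q^{-d}$ into the $(1+o(1))$ factor.
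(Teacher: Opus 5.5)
Your proof is correct, but it takes a different route from the paper: it is the spectral (expander-mixing) argument of Haemers and Vinh, which is exactly what the paper sets out to replace.

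\emph{The paper's route.} The paper applies its Thomason-style \cref{lem:graph} with the points $P$ as the left class, all $n$-flats $\cG$ as the right class, and $R=\cL$. One Cauchy--Schwarz over $\ell\in\cL$ is followed by extending the sum to all of $\cG$. The resulting second moment $\sum_{\ell\in\cG}(|\ell\cap P|-p|P|)^2$ is then expanded into codegrees of pairs of points. The off-diagonal codegree $\lambda=\qbinom{n+d-1}{n-1}$ lies below the threshold $p^2|\cG|$, so only diagonal pairs survive. This gives $\bigl|I(P,\cL)-p|P||\cL|\bigr|^2\le\qbinom{n+d}{n}|P||\cL|$. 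The replacement of $p$ by $q^{-d}$ is then handled exactly as you do it.

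\emph{How the two proofs relate.} That second moment is literally $\|N^Tu\|^2$ in your notation, since $(N^Tu)_\ell=|\ell\cap P|-p|P|$. So both proofs bound the same quantity.
\begin{itemize}
\item You evaluate it exactly as $u^TNN^Tu=(k-\lambda)\|u\|^2$ using $NN^T=(k-\lambda)I+\lambda J$. This identity is the cleanest justification of your singular-vector step. It gives the marginally better constant $k-\lambda$ in place of $k$, which is irrelevant for $\precsim$.
\item The paper expands the same quantity combinatorially and discards the negative off-diagonal terms.
\end{itemize}

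\emph{What the paper's route buys.} It needs only an upper bound on codegrees relative to $p^2|U|$, not a diagonalization. So the same lemma applies unchanged when codegrees are not constant. One case is flats as the left class, where two flats share $\theta_k$ points depending on their intersection dimension; this gives the Kong--Tamo bound and \cref{prop:first-cs}. Another is the zero-density form used in \cref{prop:second-cs}. These uses drive the main theorem. Your argument is the natural one for this particular statement, since points versus $n$-flats form a $2$-design and the spectrum is immediate.
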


\cref{thm:vinh} was first proved (in the more general setting of block designs) by Haemers; see \cite[Theorem 5.1]{Haemers1995} and
\cite[Theorem 3.1.1]{Haemers1979}.
The case $d=1$ was rediscovered by Vinh in a very influential paper \cite{vinh2011szemeredi}, and Vinh's proof was generalized by Lund and Saraf \cite{LundSaraf2016}.

The bound in \eqref{eq:vinh} is best possible as a function of
$|P||\cL|$; see, for example, the constructions in
\cite{fraser2025exceptional}. Kong and Tamo obtained a stronger estimate
for affine flats when $|\cL|$ is not too large
\cite[Theorem~1.8]{kong2025point}. The following is a projective
version of their bound; we give a proof in \cref{sec:classical-proofs}.

\begin{theorem}[Projective version of the Kong--Tamo bound]\label{thm:kong-tamo}
Let $P$ be a set of points and let $\cL$ be a set of $n$-flats, both in
$\PG(n+d,q)$. Then
\begin{equation*}
 \incdev{P}{\cL}
 \ll q^{n/2}|P|^{1/2}|\cL|^{1/2}
 \bigl(1+q^{-1}|\cL|\bigr)^{1/2}.
\end{equation*}
\end{theorem}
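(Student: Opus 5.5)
The plan is to prove the bound by a second-moment argument over \emph{all} points of $\PG(n+d,q)$, using only the trivial fact that two distinct $n$-flats meet in at most an $(n-1)$-flat. For a point $x$, write $r(x)$ for the number of flats of $\cL$ through $x$. Write $\theta_k=(q^{k+1}-1)/(q-1)\asymp q^k$ for the number of points of a $k$-flat, and $N=\theta_{n+d}$ for the total number of points. Then $I(P,\cL)=\sum_{x\in P}r(x)$. Double counting gives $\sum_{x}r(x)=|\cL|\theta_n$, so the average of $r$ is
\[
\mu=|\cL|\theta_n/N=q^{-d}|\cL|\bigl(1+O(q^{-1})\bigr).
\]
I will first bound $\bigl|\sum_{x\in P}(r(x)-\mu)\bigr|$, and only at the end replace $\mu$ by $q^{-d}|\cL|$.

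By Cauchy--Schwarz, and after enlarging the sum from $P$ to all points,
\[
\Bigl|\sum_{x\in P}(r(x)-\mu)\Bigr|\le |P|^{1/2}\Bigl(\sum_{x}(r(x)-\mu)^2\Bigr)^{1/2}.
\]
Expanding the square, and using $\sum_x r(x)=N\mu$, gives
\[
\sum_x (r(x)-\mu)^2=\sum_x r(x)^2-N\mu^2\le\sum_x r(x)^2 .
\]
The second moment counts pairs of flats together with a common point:
\[
\sum_x r(x)^2=\sum_{\ell,\ell'\in\cL}|\ell\cap\ell'| = |\cL|\theta_n+\sum_{\ell\neq\ell'}|\ell\cap\ell'|\le |\cL|\theta_n+|\cL|^2\theta_{n-1}.
\]
The last step uses that distinct $n$-flats intersect in a subspace of dimension at most $n-1$. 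Hence
\[
\sum_x r(x)^2\ll q^n|\cL|\bigl(1+q^{-1}|\cL|\bigr),
\]
and so
\[
\Bigl|\sum_{x\in P}(r(x)-\mu)\Bigr|\ll q^{n/2}|P|^{1/2}|\cL|^{1/2}\bigl(1+q^{-1}|\cL|\bigr)^{1/2}.
\]
This is exactly the desired shape. Note that discarding the subtracted term $N\mu^2$ costs nothing here, unlike in the spectral proof of \cref{thm:vinh}, where the cancellation is essential.

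The remaining step, which is the only place needing care, is replacing $\mu$ by $q^{-d}|\cL|$. The error this introduces is $|P|\,\bigl|\mu-q^{-d}|\cL|\bigr|\ll q^{-d-1}|P||\cL|$. I will show this is dominated by the term $q^{n/2}|P|^{1/2}|\cL|^{1/2}(q^{-1}|\cL|)^{1/2}=q^{(n-1)/2}|P|^{1/2}|\cL|$. The ratio of the error to this term is
\[
\frac{q^{-d-1}|P||\cL|}{q^{(n-1)/2}|P|^{1/2}|\cL|}=|P|^{1/2}q^{-(n+d+1)/2},
\]
which is $\ll q^{-1/2}\le 1$, since $|P|\le N\asymp q^{n+d}$. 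So the error term is absorbed, and the theorem follows with an implied constant depending only on $n$ and $d$.
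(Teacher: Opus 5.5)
Your proposal is correct and is essentially the paper's proof: the paper applies its Thomason-type \cref{lem:graph} with left class $\cL$, right class the full point set, and $R=P$, which is exactly your Cauchy--Schwarz second-moment computation written out, and then uses the same bound $\theta_n|\cL|+\theta_{n-1}|\cL|^2$ before replacing $p=\theta_n/\theta_{n+d}$ by $q^{-d}$. One harmless slip in your last step: the ratio is actually $|P|^{1/2}q^{-(n+2d+1)/2}$, not $|P|^{1/2}q^{-(n+d+1)/2}$, which is even smaller, so the absorption argument still goes through.
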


Comparing the two exponents shows that \cref{thm:kong-tamo} improves
\cref{thm:vinh} when $|\cL|\ll q^{dn-n+1}$. The total number of
$n$-flats in $\PG(n+d,q)$ is
$ \qbinom{n+d+1}{n+1}=(1+o(1))q^{(n+1)d}$.

Before stating our general result, we record two simpler consequences.
Matching constructions, in the precise ranges described below, show that these bounds are sharp up to constant factors.

\begin{corollary}[Standard-range bound]\label{cor:standard}
Suppose that $d\geq n$. Let $P$ be a set of points and let $\cL$ be a
set of $n$-flats in $\PG(n+d,q)$. If
\[
 q^{id}\leq|\cL|\leq q^{(i+1)d},
 \qquad
 0\leq i\leq\min\{n,d-n\},
\]
then
\begin{equation*}
 \incdev{P}{\cL}
 \ll
 q^{(id+n-i)/2}|P|^{1/2}|\cL|^{1/2}
 +q^{(n-i-1)/2}|P|^{1/2}|\cL|.
\end{equation*}
\end{corollary}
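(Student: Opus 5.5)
The plan is to derive \cref{cor:standard} directly from a second-moment computation on the point side. I expect the general theorem stated later in the paper specializes to this bound, but since it is not available here, I would argue from scratch. For a point $x$ write $r(x)$ for the number of $\ell\in\cL$ through $x$, and put $\mu=q^{-d}|\cL|$. This $\mu$ agrees up to a factor $1+o(1)$ with the average of $r$ over all $\qbinom{n+d+1}{1}$ points. By Cauchy--Schwarz,
\[
\incdev{P}{\cL}\le |P|^{1/2}\Bigl(\sum_{x}(r(x)-\mu)^2\Bigr)^{1/2}+(\text{lower-order error from replacing the exact mean by }\mu),
\]
where the sum runs over all points of $\PG(n+d,q)$. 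So it suffices to show
\[
\sum_x (r(x)-\mu)^2\ll q^{id+n-i}|\cL|+q^{n-i-1}|\cL|^2 .
\]
Expanding the square, $\sum_x r(x)^2=\sum_{\ell,\ell'\in\cL}|\ell\cap\ell'|$. The cross term $-2\mu\sum_x r(x)+\mu^2\cdot\#\{\text{points}\}$ is $-(1+o(1))q^{n-d}|\cL|^2$. Hence the left side equals $\sum_{\ell,\ell'}\bigl(|\ell\cap\ell'|-q^{n-d}\bigr)$ up to admissible error. Pairs meeting in the empty set or in a point contribute $O(|\cL|^2)$, which is absorbed by $q^{n-i-1}|\cL|^2$ when $i\le n-1$. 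The case $i=n$ needs separate care. There, pairs meeting in a point must be shown to cancel against $q^{n-d}|\cL|^2$ on average, or the bound must be relaxed using $|\cL|\ge q^{nd}$.

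Next I would split the pairs $(\ell,\ell')$ according to $j=\dim(\ell\cap\ell')$. Pairs with $j\le n-i-1$ contribute at most $q^{j}|\cL|^2\le q^{n-i-1}|\cL|^2$ each, summed over finitely many $j$. The diagonal $j=n$ contributes $q^n|\cL|$, which is at most $q^{id+n-i}|\cL|$ because $d\ge 1$. The remaining range is $n-i\le j\le n-1$. For a fixed $\ell$, the number of $n$-flats meeting $\ell$ in exactly a $j$-flat is $\asymp q^{(j+1)(n-j)}\cdot q^{(n-j)d}$. Here the first factor counts the $j$-subflats of $\ell$, and the second counts the $n$-flats through a fixed $j$-flat. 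This already gives the bound $q^{id+n-i}|\cL|$ exactly when the factor $q^{(j+1)(n-j)}$ can be avoided.

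The main obstacle is this middle range. The trivial count loses a factor $q^{(j+1)(n-j)}$ relative to the target, so a purely pairwise count is too weak there. I would therefore reorganize the sum by subflats. Writing $\cL_U$ for the set of members of $\cL$ containing a $j$-flat $U$, the count $\#\{(\ell,\ell'):\dim(\ell\cap\ell')\ge j\}$ is bounded by $\sum_U|\cL_U|^2$. I would also use the weight identity that $|\ell\cap\ell'|$ is a fixed $q$-binomial combination of the numbers of common $k$-subflats. With $|\cL_U|\le\min\{|\cL|,\asymp q^{(n-j)d}\}$ and $\sum_U|\cL_U|\asymp q^{(j+1)(n-j)}|\cL|$, the square sum is at most $\max_U|\cL_U|\cdot q^{(j+1)(n-j)}|\cL|$. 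I would then check that the inclusion--exclusion over $k$ produces cancellations among the terms $k>n-i$. The goal is that only the level $j=n-i$ survives, and at that level the relevant quantity is $q^{n-i}\cdot q^{id}$ per flat.

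I would first try to verify this cancellation numerically for $n=1$, and then for $n=2$. This is also the step where the hypotheses $d\ge n$ and $i\le d-n$ should enter: they should guarantee that $q^{(n-j)d}$ dominates $q^{(j+1)(n-j)}$ in the relevant range. If the cancellation fails, the fallback is to bound $\max_U|\cL_U|$ by $|\cL|q^{-(j+1)(n-j)}$ after a dyadic pigeonholing on $|\cL_U|$, accepting a loss of constant factors only.
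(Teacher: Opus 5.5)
Your reduction is sound and is exactly the paper's first Cauchy--Schwarz step (\cref{prop:first-cs}). It leaves you to prove $\sum_{j=n-i}^{n-1}q^jN_j(\cL)\ll q^{id+n-i}|\cL|+q^{n-i-1}|\cL|^2$, where $N_j(\cL)$ counts ordered pairs meeting in a $j$-flat. The endpoint cases need no cancellation argument, since $i=0$ is \cref{thm:kong-tamo} and $i=n$ is \cref{thm:vinh}.

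\textbf{The gap.} These middle layers are the whole content of the corollary, and your proposal gives no valid bound for them.
\begin{itemize}
\item \emph{The hoped-for cancellation cannot occur.} Since $d\geq n$, each pair with $\dim(\ell\cap\ell')=j\geq0$ contributes $\theta_j-q^{n-d}\geq0$. Re-expressing $\theta_j$ through counts of common subflats only rewrites the same nonnegative sum, so you need a genuine upper bound on $N_j(\cL)$.
\item \emph{The fallback is false.} $\max_U|\cL_U|$ cannot be bounded by the average $|\cL|q^{-(j+1)(n-j)}$. For $j=n-i$ and $\cL$ a star of $q^{id}$ flats through a fixed $(n-i)$-flat $U$, one has $|\cL_U|=|\cL|$.
\item \emph{Pigeonholing does not help.} Dyadic pigeonholing only turns the question into counting $\lambda$-rich $j$-flats, which is an incidence problem of the same difficulty. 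Indeed $\sum_U|\cL_U|^2=\sum_k\qbinom{k+1}{j+1}N_k(\cL)$ is again a combination of the pair counts you are trying to bound.
\end{itemize}
What your argument actually yields is $q^jN_j\ll q^{j+(j+1)(n-j)}|\cL|\min\{|\cL|,q^{(n-j)d}\}$, which misses the target by up to $q^{(j+1)(n-j)}$. For planes ($n=2$, $i=1$, $d\geq3$) it gives only $qN_1\ll q^{d+3}|\cL|$, which is useless for $q^d\leq|\cL|<q^{d+3}$.

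\textbf{The missing idea} is the paper's second Cauchy--Schwarz step (\cref{prop:second-cs}). It is applied not to points but to the graph on flats that records intersection dimension $h$. Write $N_h=\sum_{\ell\in\cL}d_h(\ell)$ and extend $\sum d_h(\ell)^2$ over all $n$-flats. This gives $N_h^2\leq|\cL|\sum_kN_kJ(k,h)$, where $J(k,h)\asymp q^{\kappa_{kh}}$ counts flats meeting two given flats (which meet in a $k$-flat) each in an $h$-flat (\cref{prop:common-neighbor}).

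These inequalities couple the layers and must be solved jointly. The paper does this with the supersolution $C_h=2m+h$ for $h<j$ and $C_h=B-j(h-j)$ for $h\geq j$, where $j=n-i$, $b_j=id$ and $w_j=1$. This gives $\rho+F_{\nu,\delta}(m)\leq\max\{m+id+n-i,\,2m+n-i-1\}$.

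In the plane example the system reads $N_1^2\ll|\cL|^2q^{d+2}+|\cL|q^dN_1+q^2|\cL|^3$. Hence $qN_1\ll|\cL|q^{d/2+2}+|\cL|q^{d+1}+q^2|\cL|^{3/2}$, which does lie within $q^{d+1}|\cL|+|\cL|^2$ when $d\geq3$. This self-improvement through the layers $k=h$ and $k<h$ is exactly what a direct count of pairs or subflats cannot capture.
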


\cref{cor:dual-standard} is not the projective dual of \cref{cor:standard}; rather, its proof uses projective duality.

\begin{corollary}[Dual standard-range bound]\label{cor:dual-standard}
Suppose that $n\geq d-2$. Let $P$ be a set of points and let $\cL$ be a
set of $n$-flats in $\PG(n+d,q)$. If
\[
 q^{i(n+1)}\leq|\cL|\leq q^{(i+1)(n+1)},
 \qquad
 0\leq i\leq\min\{d-1,n+2-d\},
\]
then
\begin{equation*}
 \incdev{P}{\cL}
 \ll
 q^{(i+1)n/2}|P|^{1/2}|\cL|^{1/2}
 +q^{(n-i-1)/2}|P|^{1/2}|\cL|.
\end{equation*}
\end{corollary}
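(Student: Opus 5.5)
The plan is to derive \cref{cor:dual-standard} from a second-moment (Cauchy--Schwarz) argument, applied in the dual space, so that the counting step concerns pairs of flats that span a small subspace rather than pairs that meet in a large one. The exponent bookkeeping below has not been checked, and the second step is where the argument may fail.

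\textbf{Step 1: Cauchy--Schwarz.} For a point $x$, let $r(x)$ be the number of flats of $\cL$ through $x$, and let $\mu=|\cL|\theta_n/\theta_{n+d}$ be its average over all points. Here $\theta_m$ denotes the number of points of $\PG(m,q)$. Then
\[
\incdev{P}{\cL}\precsim |P|^{1/2}\Bigl(\sum_x (r(x)-\mu)^2\Bigr)^{1/2}.
\]
The sum $\sum_x r(x)^2$ equals $\sum_{(\ell,\ell')\in\cL^2}|\ell\cap\ell'|$. If the pairs of flats meeting in a $k$-flat are grouped by $k$, the "generic'' pairs account for the subtracted term $\mu^2\theta_{n+d}$. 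What remains must be bounded by
\[
\sum_k q^k\cdot\#\{(\ell,\ell')\in\cL^2:\dim(\ell\cap\ell')\ge k\}.
\]

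\textbf{Step 2: count pairs by intersection dimension.} For each $k$, the number of such pairs is at most $\min\{|\cL|^2,\ |\cL|N_k\}$, where $N_k$ is the number of $n$-flats meeting a fixed $n$-flat in dimension at least $k$.

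Used naively in the primal space, this does not reproduce the first term $q^{(i+1)n/2}|P|^{1/2}|\cL|^{1/2}$. This is why I would pass to the dual.

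\textbf{Step 3: dualize and recount.} Under projective duality, $n$-flats of $\PG(n+d,q)$ become $(d-1)$-flats. The condition $\dim(\ell\cap\ell')\ge k$ becomes the condition that the dual flats span a subspace of dimension at most $2d-2-(k-(n-d))$.

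I would instead count pairs via the span: for each $j$, choose a $j$-flat $\Pi$ and count pairs of flats of $\cL$ contained in $\Pi$ (in the primal, pairs lying in a common small subspace). Bounding the number of flats in a given span by $\min\{|\cL|,q^{(n+1)(\cdot)}\}$ gives thresholds at the powers $|\cL|=q^{i(n+1)}$. This matches the hypothesis $q^{i(n+1)}\le|\cL|\le q^{(i+1)(n+1)}$.

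\textbf{Step 4: sum and check the exponents.} In the given range I would sum over $k$ and verify that:
\begin{enumerate}[label=(\roman*)]
\item the contribution of pairs with intersection dimension at least $n-i$ is $\ll q^{(i+1)n}|\cL|$, which gives the first term;
\item pairs meeting in dimension $n-i-1$ contribute at most $q^{n-i-1}|\cL|^2$, which gives the second term;
\item all smaller $k$ are absorbed into the generic mean.
\end{enumerate}
The constraints $n\ge d-2$ and $i\le\min\{d-1,n+2-d\}$ should be exactly what makes the geometric series in $k$ dominated by these two endpoint terms.

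\textbf{Main obstacle.} The hard step is Step 3, obtaining the sharp count of pairs of flats with large intersection. The trivial bound $|\cL|N_k$ is too weak, as noted in Step 2. The argument needs an inductive or double-counting bound through the intermediate subspaces, and I expect this to be the content of the paper's general theorem, applied here with dual parameters. If that fails, I would try iterating the Cauchy--Schwarz over a chain of intermediate subspaces.
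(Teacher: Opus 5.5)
There is a genuine gap: Step~3 is missing, and the route you sketch for it cannot work. Your Step~1 is the paper's first Cauchy--Schwarz step (\cref{prop:first-cs}), and you are right that bounding each layer by $|\cL|$ times the number of $n$-flats meeting a fixed one in dimension $k$ is too weak. But Step~3 carries the entire content of the corollary, and you defer it to an unspecified general theorem.

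To see why span counting fails, take the decisive layer $k=n-i$. There you need $N_{n-i}(\cL)\ll q^{i(n+1)}|\cL|$ when $q^{i(n+1)}\le|\cL|\le q^{i(n+1)+1}$. Two $n$-flats meeting in an $(n-i)$-flat span a unique $(n+i)$-flat $\Pi$. Writing $\cL_\Pi$ for the members of $\cL$ inside $\Pi$, this gives $N_{n-i}(\cL)\le\sum_\Pi|\cL_\Pi|^2\le\max_\Pi|\cL_\Pi|\cdot\sum_\Pi|\cL_\Pi|$. Here $|\cL_\Pi|\le\qbinom{n+i+1}{n+1}\asymp q^{i(n+1)}$. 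However, $\sum_\Pi|\cL_\Pi|=\qbinom{d}{i}|\cL|\asymp q^{i(d-i)}|\cL|$, because every $n$-flat lies in $\qbinom{d}{i}$ of the $(n+i)$-flats. The result, $q^{i(n+1)+i(d-i)}|\cL|$, is exactly the trivial bound again. It is too large by $q^{i(d-i)}\ge q^{i}$, since $1\le i\le d-1$. Capping the number of flats per span by $\min\{|\cL|,q^{(n+1)(\cdot)}\}$ does nothing here. What is needed is a global reason why a family in this size range cannot nearly fill every span through each of its members.

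The missing idea is a second Cauchy--Schwarz step, applied to the graph whose vertices are flats and whose edges join flats meeting in exactly an $h$-flat.
\begin{itemize}
\item In the dual normalization, $\nu=d-1$ and $\delta=n+1$, and your layer $k=n-i$ becomes $h=\nu-i$.
\item \Cref{prop:second-cs} gives $N_h^2\le M\sum_kN_kJ_{\nu,\delta}(k,h)$, where $J_{\nu,\delta}(k,h)$ counts common $h$-neighbors of two flats meeting in a $k$-flat. \Cref{prop:common-neighbor} gives $J_{\nu,\delta}(k,h)\ll q^{\kappa_{kh}}$.
\item For $h=\nu-i$, the self-loop kernel satisfies $J_{\nu,\delta}(h,h)\ll q^{b_h}=q^{i(n+1)}$. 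This is smaller than the degree in that graph by a factor $\asymp q^{i(d-i)}$, precisely what your span count loses. So the self-loop term alone yields $N_h\ll q^{i(n+1)}M$.
\item The other terms couple all layers $-1\le k\le\nu$. The paper controls them by showing that $T_m$ is a monotone $\tfrac12$-contraction and exhibiting the supersolution \eqref{eq:C-supersolution}, whose branches are checked by \cref{lem:coefficient-comparisons}.
\end{itemize}
The hypothesis $i\le n+2-d$, i.e.\ $j=\nu-i\ge r=2\nu-\delta$, is what forces $c_j=0$ and $w_j=1$. Then the adjacent-layer term $\tfrac12(3m+\Lambda_j)+j$ of $H_j$ is dominated, and only your two terms survive. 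Its role is not to make a geometric series in $k$ endpoint-dominated. The cases $i=0$ and $i=d-1$ follow directly from \cref{thm:kong-tamo,thm:vinh}.

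Finally, your item (iii) is inaccurate. Only layers $k\le n-d$ fall below the threshold $\theta_n^2/\theta_{n+d}$. The layers $n-d+1\le k\le n-i-1$ are not absorbed and must be bounded by $q^k|\cL|^2$, which is harmless.
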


For $i=0$, the two terms in either corollary recover the two regimes of
the Kong--Tamo estimate. If $d\geq2n$, then \cref{cor:standard}
improves \cref{thm:vinh} whenever $|\cL|\ll q^{dn}$. If
$n\geq2d-3$, then \cref{cor:dual-standard} improves
\cref{thm:vinh} whenever
$|\cL|\ll q^{(n+1)(d-1)}$.

These bounds are sharp in the following sense: for every prescribed integer
$M$ in the stated range, there exist a family $\cL$ of exactly $M$
$n$-flats and a nonempty point set $P$, whose cardinality may depend
on $M$, such that $\incdev{P}{\cL}$ has the order of the asserted
upper bound. The comparison constants depend only on $n$ and $d$.
For $d\geq2$, the constructions in
\cref{prop:lower-bound-constructions} establish sharpness throughout
\[
 \begin{aligned}
  1\leq M&\leq q^{d\min\{n,d-n+1\}}
    &&\text{for \cref{cor:standard}, when $d\geq n$},\\
  1\leq M&\leq q^{(n+1)\min\{d-1,n+3-d\}}
    &&\text{for \cref{cor:dual-standard}, when $n\geq d-2$}.
 \end{aligned}
\]
In particular, when $n\geq2d-3$ and $d\geq2$, the dual-standard
bound is sharp throughout $1\leq M\leq q^{(n+1)(d-1)}$.

There is a further extension in the standard case. Let $\cG$ be the
set of all $n$-flats in $\PG(n+d,q)$. If $d\geq2n$, then the
minimum of the bounds in \cref{cor:standard} and \cref{thm:vinh} is
sharp for every integer $1\leq M<|\cG|/2$, for all sufficiently large
$q$. The first part of \cref{prop:lower-bound-constructions} covers
$M\leq q^{nd}$, and its additional assertion covers
$q^{nd}\leq M<|\cG|/2$.

In both \cref{cor:standard} and \cref{cor:dual-standard}, the exponent in the error term is a piecewise linear function of $\log_q |\cL|$ that alternates sections with slope $1/2$ and $1$.
The general result has the same basic shape, but there may be additional short pieces, some of which have slope $3/2$.

\cref{fig:standard-comparison,fig:exceptional-comparison} illustrate these bounds in two special cases. \cref{fig:standard-comparison} shows a case in which $d>2n$, where our bounds are relatively simple and always sharp (in the sense discussed above). \cref{fig:exceptional-comparison} shows a more complicated situation, including segments with slope $3/2$ and shifted short segments.

\begin{figure}[h]
\centering
\begin{tikzpicture}
\begin{axis}[
 width=.78\textwidth,
 height=.43\textwidth,
 xmin=20,xmax=50,
 ymin=20,ymax=45,
 xtick={20,30,40,50},
 xticklabels={$2d$,$3d$,$4d$,$5d$},
 ytick={20,25,30,35,40,45},
 xlabel={$\log_q(|\cL|)$},
 ylabel={$\log_q\!\left(\dfrac{\incdev{P}{\cL}}{|P|^{1/2}}\right)$},
 grid=major,
 legend style={at={(0.98,0.02)},anchor=south east,draw=black,fill=white},
 legend cell align=left,
 clip=true
]
\addplot[blue,thick,mark=none] coordinates {(20,30) (50,45)};
\addlegendentry{Vinh}
\addplot[red,thick,mark=none] coordinates {(20,21.5) (43.5,45)};
\addlegendentry{Kong--Tamo}
\addplot[green!55!black,thick,mark=none] coordinates
 {(20,21) (21,21.5) (30,30.5) (31,31) (40,40) (50,45)};
\addlegendentry{Our bound}
\end{axis}
\end{tikzpicture}
\caption{Comparison of the incidence-error exponents for $d=10$ and
$n=4$.}
\label{fig:standard-comparison}
\end{figure}

\begin{figure}[h]
\centering
\begin{tikzpicture}
\begin{axis}[
 width=15.5cm,
 height=9.5cm,
 xmin=6,xmax=32,
 ymin=7.5,ymax=31,
 xtick={6,12,18,24,30},
 xticklabels={$d$,$2d$,$3d$,$4d$,$5d$},
 ytick={8,12,16,20,24,28,31},
 xlabel={$m=\log_q|\mathcal L|$},
 ylabel={$\log_q\!\left(
   \dfrac{\left|I(P,\mathcal L)-q^{-d}|P|\,|\mathcal L|\right|}
         {|P|^{1/2}}
   \right)$},
 grid=major,
 legend style={
   at={(0.98,0.02)},
   anchor=south east,
   draw=black,
   fill=white
 },
 legend cell align=left,
 clip=true
]
\addplot[blue,thick,mark=none]
 coordinates {(6,18) (32,31)};
\addlegendentry{Vinh}

\addplot[red,thick,mark=none]
 coordinates {(6,8) (29,31)};
\addlegendentry{Kong--Tamo}

\addplot[green!55!black,thick,mark=none]
 coordinates {
   (6,8) (7,8.5) (13,14.5) (14,15)
   (20,21) (21,21.5) (28,28.5) (30,30) (32,31)
 };
\addlegendentry{Our bound}

\addplot[orange!85!black,thick,mark=none]
 coordinates {
   (6,8) (7,8.5) (12,13.5) (13,14)
   (18,19) (19,19.5) (24,24.5) (25,25) (30,30) (32,31)
 };
\addlegendentry{Lower bound (Prop.~6)}
\end{axis}
\end{tikzpicture}
\caption{Comparison of the incidence-error exponents for $d=6$ and $n=5$.}
\label{fig:exceptional-comparison}
\end{figure}

To describe the general case precisely, for a nonempty family $\cL$ write
\[
 m=\log_q|\cL|,
 \qquad m\geq0,
\]
and introduce the normalized parameters
\begin{equation}\label{eq:normalization-intro}
 (\nu,\delta,\rho)=
 \begin{cases}
  (n,d,0),& d>n,\\
  (d-1,n+1,n-d+1),& d\leq n.
 \end{cases}
\end{equation}
Thus
\begin{equation*}
 (\nu+1)\delta=(n+1)d,
 \qquad
 \nu+\rho=n,
 \qquad
 \delta>\nu.
\end{equation*}
The second line of \eqref{eq:normalization-intro} comes from projective
duality. 
If \(d\le n\) and two \(n\)-flats intersect in a \(j\)-flat, then their dual \((d-1)\)-flats intersect in a \((j-\rho)\)-flat.

The upper bound in the general case takes the form
\begin{equation*}
 \incdev{P}{\cL}
 \ll
 |P|^{1/2}
 q^{\frac12\min\{m+nd,\,F_{\nu,\delta}(m)+\rho\}},
\end{equation*}
where $F_{\nu,\delta}$ is a continuous piecewise-linear function. Its rough
qualitative shape is summarized in \cref{tab:F-shape}. Apart from a
final slope-$3/2$ piece which never affects the final incidence bound since we take a minimum with the bound from \cref{thm:vinh}, the graph consists mainly of pieces of slope $2$, separated by short pieces of slope $1$ or $3/2$.

\begin{table}[ht]
\centering
\renewcommand{\arraystretch}{1.25}
\begin{tabular}{c|c|c}
 & breakpoints & short pieces \\ \hline
$\delta\geq2\nu$
 & regularly spaced
 & slope $1$ \\
$\nu<\delta<2\nu$
 & some shifted
 & slope $1$ or $3/2$
\end{tabular}
\caption{Qualitative shape of the exponent $F_{\nu,\delta}$.}
\label{tab:F-shape}
\end{table}

We next define $F_{\nu,\delta}$ on $[0,\infty)$, describing its
breakpoints first on $[0,(\nu+1)\delta]$. If $\nu=0$, set $F_{0,\delta}(m)=m.$
Suppose henceforth that $\nu\geq1$, and put $r=2\nu-\delta.$
For $0\leq j\leq\nu-1$, define the correction
\begin{equation*}
 c_j=
 \max_{\substack{0\leq u\leq j+1\\u\in\mathbb Z}}
 u(r-j+1-u).
\end{equation*}
Set
\begin{equation*}
 b_\nu=0,
 \qquad
 b_j=(\nu-j)\delta+c_j
 \quad(0\leq j\leq\nu-1),
\end{equation*}
and, for $1\leq j\leq\nu$, define the width
\begin{equation*}
 w_j=
 \begin{cases}
  2,& r\geq3j,\\
  1,& r<3j.
 \end{cases}
\end{equation*}
Since $r<\nu$, one always has $w_\nu=1$.

The numbers $b_j$ are the principal breakpoints of
$F_{\nu,\delta}$. For $1\leq j\leq\nu$, define
\begin{equation}\label{eq:AB-def}
 A_j=(b_j,2b_j+j),
 \qquad
 B_j=\bigl(b_j+w_j,\,2(b_j+w_j)+j-1\bigr),
\end{equation}
and set $ A_0=(b_0,2b_0).$
On $0\leq m\leq b_0$, the graph of $F_{\nu,\delta}$ is the polygonal
path joining
\begin{equation*}
 A_\nu,\ B_\nu,\ A_{\nu-1},\ B_{\nu-1},\ldots,
 A_1,\ B_1,\ A_0
\end{equation*}
in this order. The segment from $A_j$ to $B_j$ has slope
\[
 2-\frac1{w_j}
 =
 \begin{cases}
  1,& w_j=1,\\[1mm]
  \frac32,& w_j=2,
 \end{cases}
\]
while every segment from $B_j$ to $A_{j-1}$ has slope $2$.

It remains to describe the terminal part of the graph. Put
\begin{equation*}
 b_\ast=b_0+\max\{0,\delta-2\nu\}.
\end{equation*}
Then
\begin{align*}
 F_{\nu,\delta}(m)
 &=m+b_0,
 &&b_0\leq m\leq b_\ast,\\
 F_{\nu,\delta}(m)
 &=b_\ast+b_0+\frac32(m-b_\ast),
 &&b_\ast\leq m\leq(\nu+1)\delta.
\end{align*}
When $\delta\leq2\nu$, the first interval degenerates to the single point $b_0$.

We can now state the general result.

\begin{theorem}[Main theorem]
\label{thm:main}
Let $n,d\geq1$, let $P$ be a set of points, and let $\cL$ be a nonempty set of
$n$-flats in $\PG(n+d,q)$. Write $|\cL|=q^m$, define
$(\nu,\delta,\rho)$ by \eqref{eq:normalization-intro}, and define
$F_{\nu,\delta}$ as above. Then
\begin{equation}\label{eq:main-incidence}
 \incdev{P}{\cL}
 \ll
 |P|^{1/2}q^{\Omega_{n,d}(m)/2},
 \qquad
 \Omega_{n,d}(m)
 =\min\bigl\{m+nd,\,\rho+F_{\nu,\delta}(m)\bigr\}.
\end{equation}
\end{theorem}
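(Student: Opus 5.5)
The plan is to run a combinatorial second-moment argument that reduces the incidence discrepancy to counting pairs of flats by the dimension of their intersection, and then to bound those counts by a weighted optimisation over intersection dimensions.

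\textbf{Reduction to a second moment.} The first term $m+nd$ is \cref{thm:vinh}, so only $\rho+F_{\nu,\delta}(m)$ needs proof. For each point $x$ set $f(x)=|\{\ell\in\cL: x\in\ell\}|-q^{-d}|\cL|$. By Cauchy--Schwarz,
\[
\incdev{P}{\cL}=\Bigl|\sum_{x\in P}f(x)\Bigr|\le|P|^{1/2}\Bigl(\sum_{x}f(x)^2\Bigr)^{1/2},
\]
with the sum over all points of $\PG(n+d,q)$. Expanding the square, $\sum_x f(x)^2$ is, up to the main term, equal to
\[
\sum_{(\ell,\ell')\in\cL^2}\Bigl(|\ell\cap\ell'|-\tfrac{|\ell||\ell'|}{|\PG(n+d,q)|}\Bigr).
\]
A pair meeting in a $j$-flat contributes about $q^j$, and the subtracted term is about $q^{n-d}$. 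It therefore suffices to show
\[
\sum_{j\ge 0} N_j\,q^{j}\ll q^{\rho+F_{\nu,\delta}(m)},
\]
where $N_j$ is the number of ordered pairs in $\cL^2$ meeting in a $j$-flat; pairs with $j$ small are absorbed by the $q^{n-d}$ term. For $d\le n$, two $n$-flats always meet in dimension at least $n-d=\rho-1$. Passing to the dual $(d-1)$-flats in $\PG(n+d,q)$ shifts $j$ to $j-\rho$, which produces the factor $q^\rho$ and replaces $(n,d)$ by $(\nu,\delta)$. So I will assume $\rho=0$ and $\delta>\nu$.

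\textbf{Counting pairs by intersection dimension.} The trivial bounds are $N_j\le q^{2m}$ and $N_j\le q^m\cdot\#\{\ell':\dim(\ell\cap\ell')=j\}\asymp q^{m+(\nu-j)(\delta-\nu+j)}$. The latter counts $n$-flats meeting a fixed one in a $j$-flat, $(\nu-j)$ being the codimension in $\ell$ and so on. Combining them gives $\sum_j q^j\min\{q^{2m},q^{m+(\nu-j)(\delta-\nu+j)}\}$. This yields the slope-$2$ pieces, where $q^{2m+j}$ dominates, and the slope-$1$ pieces, where $m+j+(\nu-j)(\delta-\nu+j)$ dominates. I would first check that the maximum over $j$ of this expression reproduces the points $A_j=(b_j,2b_j+j)$ with $c_j=0$, and the terminal line $m+b_0$.

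\textbf{Refined counting.} The corrections $c_j$ and the slope-$3/2$ pieces need a finer count. I would bound $N_j$ for large $j$ by fixing a $j$-flat $\tau$ and counting the flats of $\cL$ containing $\tau$. The argument is a double count over intermediate flats: the flats through $\tau$ correspond to $(\nu-j-1)$-flats in a quotient $\PG(\delta-\nu+\nu-j-1,q)$. I would then apply the lower-dimensional bound to this quotient configuration, either by induction on $\nu$ or by a Cauchy--Schwarz step over $u$-dimensional extensions. The maximum over $u$ in $c_j=\max_u u(r-j+1-u)$ suggests optimising over an auxiliary intermediate dimension $u$, where $\tau$ is enlarged to a $(j+u)$-flat. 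That trade-off gives a quadratic gain $u(r-j+1-u)$ in the exponent. It also produces the width-$2$ transition when $r\ge 3j$ and the $3/2$ slope, coming from $\sum q^{3m/2}$-type terms in a Cauchy--Schwarz step.

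\textbf{Main obstacle.} The hardest part will be the piecewise-linear bookkeeping: proving that the maximum over $(j,u)$ of the exponents produced by the refined counts equals $F_{\nu,\delta}(m)$ exactly, including the shifted breakpoints $b_j$ when $\nu<\delta<2\nu$. I would verify this by checking the values at each $A_j$ and $B_j$ together with the slopes of every segment, and use concavity of the bounding exponents on each interval to conclude.
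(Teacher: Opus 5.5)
Your reduction (Cauchy--Schwarz over points down to $\sum_j q^jN_j$, then duality to reach $\delta>\nu$ with the shift $q^\rho$) matches the paper's first Cauchy--Schwarz step and is fine. The gap is in bounding $N_j$.

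First, your ``trivial'' count is wrong. The number of $\nu$-flats in $\PG(\nu+\delta,q)$ meeting a fixed $\nu$-flat in exactly a $j$-flat is $\asymp q^{(\nu-j)(\delta+j+1)}$, not $q^{(\nu-j)(\delta-\nu+j)}$. Your count would give $N_{\nu-1}\le Mq^{\delta-1}$. The star of all $\nu$-flats through a fixed $(\nu-1)$-flat has $M\asymp q^\delta$ and $N_{\nu-1}=M(M-1)$, which violates this. With the correct count, $\max_j\bigl(j+\min\{2m,\,m+(\nu-j)(\delta+j+1)\}\bigr)$ does not reproduce the vertices $A_j,B_j$, even when every $c_j=0$. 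For example, for $\nu\ge2$ and $m=\delta+1$ it gives $2\delta+\nu+1$, whereas $F_{\nu,\delta}(\delta+1)=2\delta+\nu$. So a saving over degree counting is needed on every slope-$1$ piece, not only for the corrections $c_j$ and the width-$2$ pieces.

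Your ``refined counting'' does not supply this saving. If $a_\tau$ is the number of members of $\cL$ through a $j$-flat $\tau$, then $\sum_\tau a_\tau^2=\sum_{k\ge j}N_k\qbinom{k+1}{j+1}$, so passing to the configurations through $\tau$ is circular. Inside a single quotient, pairs meeting exactly in $\tau$ become generically disjoint pairs, so no lower-dimensional bound there beats $a_\tau^2$. What is missing is global information.

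The missing idea is the paper's second Cauchy--Schwarz step, applied to the ``$h$-intersection graph'' on all $\nu$-flats. For $\ell\in\cG$ put $d_h(\ell)=\#\{\ell_1\in\widetilde{\cL}:\dim(\ell\cap\ell_1)=h\}$. Then
\[
 N_h^2\le M\sum_{\ell\in\cG}d_h(\ell)^2=M\sum_{k}N_k\,J_{\nu,\delta}(k,h),
\]
where $J_{\nu,\delta}(k,h)$ counts flats meeting each of two given flats, which themselves meet in a $k$-flat, in an $h$-flat. The gain comes from codegrees being smaller than degrees: $J(h,h)\asymp q^{b_h}$, while the degree is $J(\nu,h)\asymp q^{(\nu-h)(\delta+h+1)}$.

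\begin{itemize}
 \item The quadratic $u(r-j+1-u)$ in $c_j$ comes from $u=h-\sigma$, where $\sigma$ is the dimension of the triple intersection $\ell\cap\ell_1\cap\ell_2$. It does not come from enlarging $\tau$.
 \item The slope-$3/2$ pieces come from the adjacent layer $k=j-1$ with $N_{j-1}\le M^2$, giving $N_j\ll M^{3/2}q^{\Lambda_j/2}$. The terminal slope-$3/2$ piece comes from $k=-1$.
\end{itemize}

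These inequalities couple all layers, so you must then solve the system. The paper passes to logarithmic coordinates, where the map $T_m$ is monotone and $\tfrac12$-contractive. It bounds the fixed point by explicit supersolutions, and proves matching lower bounds within the system to identify the maximum with $F_{\nu,\delta}$. Your piecewise-linear bookkeeping only has something to act on once inequalities of this kind are in place.
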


The simple bounds in \cref{cor:standard,cor:dual-standard} correspond
to the regularly spaced portions of this polygonal curve. In
particular, when $\delta\geq2\nu$, all of the corrections $c_j$ vanish
and all of the widths $w_j$ equal one. Thus the successive short pieces
have slope $1$, the intervening pieces have slope $2$, and the
breakpoints are
\[
 b_j=(\nu-j)\delta.
\]
When $\delta>2\nu$, these pieces are followed by an additional
slope-$1$ segment of length $\delta-2\nu$, and then by the final
slope-$3/2$ segment.

The range $\nu<\delta<2\nu$ is the exceptional regime. Here
$r=2\nu-\delta>0$. The corrections $c_j$ shift some of the breakpoints
away from $(\nu-j)\delta$, while the condition $3j\leq r$ gives
$w_j=2$ and hence replaces the corresponding short slope-$1$ piece by
a slope-$3/2$ piece. Thus the extra complexity in the general formula
is governed by two effects: a displacement of certain breakpoints and
a change in slope on certain short pieces.

The full formula may be sharper than either two-term consequence even
in relatively small dimensions. For example, when $(n,d)=(3,4)$ and
$m=10$, the general bound gives
$F_{3,4}(10)=20$,
whereas the corresponding two-term estimate has exponent $21$.

\cref{thm:main} improves the \cref{thm:vinh} bound whenever $m \leq \nu \delta - j_{\mathrm V}$, where
\[j_{\mathrm V} = \max\{0 \leq j \leq \nu-1: c_j \geq j(\delta - 1) \}.\]
In the case $\delta \geq 2 \nu$, we have $j_{\mathrm V} = 0$ and hence obtain an improvement whenever $m \leq \nu \delta$. For $\nu\geq1$, the first part of \cref{prop:lower-bound-constructions} gives matching lower bounds throughout $0\leq m\leq\nu\delta$; the additional standard-case range is described above.

\subsection{Proof ideas and outline}

All of the theorems discussed above can be interpreted as statements about the pseudorandomness of the  incidence graph between points and $n$-flats in $\PG(n+d,q)$.
\cref{thm:vinh} is essentially (up to lower order terms) the claim that this graph is {\em $(q^{-d}, q^{dn/2})$-bijumbled}, while the remaining theorems give finer characterizations that are more precisely tailored to this specific graph.
Earlier proofs of \cref{thm:vinh} and \cref{thm:kong-tamo} rely on spectral graph theory \cite{Haemers1995, vinh2011szemeredi, LundSaraf2016}, or (similarly) on finite Fourier analysis \cite{fraser2025exceptional}.
By contrast, our proofs work directly with incidence and intersection counts, without using eigenvalue estimates or Fourier analysis.
Instead of computing eigenvalues or Fourier coefficients, we bound the codegrees of pairs of vertices in the incidence graph.
This is a standard approach to pseudorandom graphs, going back to the foundational work of Thomason \cite{thomason1989dense}.

The proof is based on two applications of Cauchy–Schwarz. The first bounds the squared incidence discrepancy in terms of weighted counts of pairs of flats with prescribed intersection dimensions. The second gives a finite system of quadratic inequalities for these counts, with coefficients given by common-neighbor kernels. We calculate the leading powers of \(q\) in these kernels. Projective duality reduces the analysis to the normalized range \(\delta>\nu\), where passing to logarithmic coordinates produces a monotone, contractive map. Its unique fixed point gives upper bounds for the exponents of the weighted pair counts. We then construct explicit supersolutions and prove matching lower bounds within the fixed-point system to evaluate the maximum of these coordinates and the diagonal contribution. The function \(F_{\nu,\delta}\) records this scalar maximum.

\subsection{Organization}

\Cref{sec:lowerBounds} gives the star and contained-flat constructions
and establishes the sharpness claims above.
\Cref{sec:applications} applies the incidence bound to rich flats,
exceptional orthogonal projections, and Furstenberg sets.
\Cref{sec:graph-lemma} proves the graph-theoretic variance estimate
used in both Cauchy--Schwarz steps, and
\cref{sec:classical-proofs} uses it to give new proofs of
\cref{thm:vinh,thm:kong-tamo}.
\Cref{sec:initial-bounds} derives the pair-counting inequalities and
the common-neighbor kernel asymptotics.
Finally, \cref{sec:main-proof} bounds the pair counts through a
fixed-point system, evaluates the resulting scalar exponent, and
deduces \cref{thm:main} and
\cref{cor:standard,cor:dual-standard}.

\section{Lower bounds}\label{sec:lowerBounds}
In this section, we present two simple constructions that attain the upper bounds given in \cref{cor:standard} and \cref{cor:dual-standard}.
When $d>n$, we use stars.
The star of $n$-flats that contain a fixed $(n-i)$-flat has $q^{id}$ members.
Taking several parallel copies of this star allows us to vary the size
of $\cL$.
When only a few copies are needed, the affine parts of the
cores give the high-multiplicity points; for larger collections, the
common $(n-i-1)$-flat at infinity gives the larger contribution.

When $d\leq n$, we use a dual contained-flat construction.  
The family of $n$-flats contained in a  fixed
$(n+i)$-flat has $q^{i(n+1)}$ members.  
Again, several parallel copies give the first term in the bound, while the second term is
obtained by taking an arbitrary collection of $n$-flats inside one
$(n+i+1)$-flat.  

For an integer $M$, define 
\begin{equation*}
 \mathcal E_{n,d}(M)
 =
 \max_{\substack{|\cL|=M\\
                  \varnothing\neq P\subseteq\PG(n+d,q)}}
 \frac{I(P,\cL)-q^{-d}|P|M}{|P|^{1/2}},
\end{equation*}
where the maximum is over sets $\cL$ of $n$-flats and sets $P$ of
points.  Recall that $(\nu,\delta,\rho)$ are the normalized parameters
from \eqref{eq:normalization-intro}; thus
\[
 (\nu,\delta)=
 \begin{cases}
  (n,d),& d>n,\\
  (d-1,n+1),& d\leq n.
 \end{cases}
\]

\begin{proposition}[Star and contained-flat constructions]
\label{prop:lower-bound-constructions}
Suppose that $\nu\geq1$.  Let $0\leq i\leq\nu-1$ and
\[
 q^{i\delta}\leq M\leq q^{(i+1)\delta}.
\]
Then
\begin{equation}\label{eq:construction-lower-bound}
 \mathcal E_{n,d}(M)
 \gg
 q^{(i\delta+n-i)/2}M^{1/2}
 +q^{(n-i-1)/2}M.
\end{equation}
Equivalently, if $M=q^m$, then
\begin{equation}\label{eq:construction-exponent}
 \mathcal E_{n,d}(q^m)
 \gg
 q^{\frac12\max\{m+i\delta+n-i,\,2m+n-i-1\}}.
\end{equation}
When $d>n$, the two terms are realized by star constructions.  When
$d\leq n$, they are realized by the dual contained-flat constructions.

In addition, suppose that $d>n$, and let $\cG$ be the set of all
$n$-flats in $\PG(n+d,q)$. For every integer
\[
 q^{nd}\leq M<\tfrac12|\cG|,
\]
we have, for all sufficiently large $q$,
\begin{equation}\label{eq:construction-terminal-bound}
 \mathcal E_{n,d}(M)\gg q^{nd/2}M^{1/2}.
\end{equation}
\end{proposition}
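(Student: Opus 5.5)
For $d>n$ I would use the star construction and for $d\le n$ the dual contained-flat construction. In each case I would verify the two terms of \eqref{eq:construction-lower-bound} separately, because $\max\{a,b\}\asymp a+b$. The terminal bound \eqref{eq:construction-terminal-bound} needs a separate averaging argument.

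\emph{Case $d>n$, first term.} Write $M=kq^{id}$ with $1\le k\le q^d$. Fix an $(n-i-1)$-flat $H$ inside a hyperplane $\Pi$ at infinity, and choose $k$ distinct $(n-i)$-flats $C_1,\dots,C_k\supset H$, none contained in $\Pi$. The flats through $H$ not in $\Pi$ number $\gg q^{d+i}\ge q^d$, so this is possible. Their affine parts $C_j\setminus H$ are pairwise disjoint. The star of $n$-flats through $C_j$ has $\asymp q^{id}$ members. A flat lying in two stars contains $\langle C_a,C_b\rangle$, an $(n-i+1)$-flat, and such flats are a proportion $O(q^{-d})$ of each star. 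Hence I can pick about $M/k$ members from each star so that $\cL$ has exactly $M$ distinct flats.

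Take $P=\bigcup_j(C_j\setminus H)$, so $|P|\asymp kq^{n-i}$. Each point of $P$ lies on $\gg q^{id}$ flats of $\cL$, while $q^{-d}M=kq^{id-d}$. After shrinking $k$ by a constant factor if needed, the discrepancy is therefore $\gg|P|q^{id}$. This equals
\[
 |P|^{1/2}\,(kq^{n-i})^{1/2}\,q^{id}=|P|^{1/2}\,q^{(id+n-i)/2}\,M^{1/2}.
\]

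\emph{Case $d>n$, second term.} In the same configuration take $P=H$, which is nonempty because $i\le n-1$. Every flat of $\cL$ contains $H$, so $I(P,\cL)=|P|M$. The discrepancy is then $\gg |P|M=|P|^{1/2}q^{(n-i-1)/2}M$.

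\emph{Case $d\le n$.} Here $\nu=d-1$ and $\delta=n+1$, so $i\le d-2$. For the first term, fix an $(n+i-1)$-flat $K$ and choose $k$ distinct $(n+i)$-flats $S_j\supset K$. Let $\cL$ consist of about $M/k$ of the $\asymp q^{i(n+1)}$ $n$-flats inside each $S_j$. An $n$-flat cannot lie in two of the $S_j$, since their intersection is $K$, so these families are disjoint. Take $P=\bigcup_j(S_j\setminus K)$, so $|P|\asymp kq^{n+i}$. A point of $S_j$ lies on a proportion $\asymp q^{-i}$ of the $n$-flats in $S_j$, while the expected proportion is $q^{-d}\le q^{-i-2}$. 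The discrepancy is therefore $\gg|P|q^{-i}M/k=|P|q^{in}$. This equals
\[
 |P|^{1/2}\,q^{(in+n)/2}\,M^{1/2}=|P|^{1/2}\,q^{(i\delta+n-i)/2}\,M^{1/2}.
\]
For the second term, take $M$ arbitrary $n$-flats inside one $(n+i+1)$-flat $T$ and let $P=T$. Each flat contains a proportion $\asymp q^{-(i+1)}$ of $P$, and $q^{-(i+1)}\gg q^{-d}$ because $i+1<d$. The discrepancy is $\gg|P|q^{-(i+1)}M=|P|^{1/2}q^{(n-i-1)/2}M$.

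\emph{Terminal bound.} This is the step I expect to be the main obstacle. My plan is to fix a suitable point set $P$ and study $X_\ell=|\ell\cap P|-q^{-d}|P|$ as $\ell$ ranges over $\cG$. First compute the second moment exactly from pair counts:
\[
 \textstyle\sum_{\ell\in\cG}X_\ell^2\asymp|\cG|\,q^{-d}|P|
\]
when $|P|\le q^{d}$. Since $\sum_\ell X_\ell\approx0$, the choice of $\cL$ can exploit a sign. When $M<|\cG|/2$, take $\cL$ to be the $M$ flats with the largest $X_\ell$, possibly after passing to complements.

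I would take $P$ to be a set of about $q^{(n+1)d}/M$ points, for example a subset of a suitable flat, so that a typical $X_\ell$ has size $\asymp(q^{-d}|P|)^{1/2}$. I then need to show that the top $M$ deviations have average $\gg(q^{-d}|P|)^{1/2}$. For this I would bound a fourth moment of $X_\ell$ by counting $4$-tuples of points in a common $n$-flat, and apply Paley--Zygmund. That gives discrepancy $\gg M(q^{-d}|P|)^{1/2}\asymp q^{nd/2}M^{1/2}|P|^{1/2}$, as required. The delicate part is the range of $M$ close to $|\cG|/2$, where the top-$M$ truncation must still retain a positive fraction of the deviation mass. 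This is why the statement only asks for large $q$.
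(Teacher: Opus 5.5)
Your constructions for the two terms of \eqref{eq:construction-lower-bound} are the paper's, phrased projectively: stars whose $(n-i)$-dimensional cores share an $(n-i-1)$-flat $H$ at infinity, with $P$ the union of the affine cores or $P=H$; and in the dual case, $(n+i)$-flats through a common $(n+i-1)$-flat $K$, or $n$-flats inside one $(n+i+1)$-flat. They need three small repairs.

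First, the first-term constructions only work for $k\le q$, not for $k$ up to $q^{\delta}$. For $d>n$, a star shares a proportion $O(kq^{-d})$ of its members with the other stars, and you need $1-kq^{-d}$ bounded below. For $d\le n$, there are only $\theta_{d-i-1}\asymp q^{d-i-1}$ $(n+i)$-flats through $K$. This restriction is harmless: the ratio of the first term to the second is $(q^{i\delta+1}/M)^{1/2}$, so the first term only has to be realized for $M\le q^{i\delta+1}$. You should say this explicitly.

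Second, for $i\ge1$ an $n$-flat can lie in two of the $S_j$, namely when it lies in $K$, since $\dim K=n+i-1\ge n$. Discard those flats; they are a proportion $O(q^{-n-1})$ of each family and miss $P$ anyway.

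Third, for the second term with $d>n$, simply take any $M$ of the $\qbinom{d+i+1}{i+1}\ge q^{(i+1)d}$ $n$-flats through $H$.

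The genuine gap is the terminal bound \eqref{eq:construction-terminal-bound}, where your choice $|P|\approx q^{(n+1)d}/M$ is inverted. Every point lies on $\qbinom{n+d}{n}\asymp q^{nd}$ flats, so $I(P,\cL)\ll|P|q^{nd}$. The required inequality $I(P,\cL)-q^{-d}|P|M\gg q^{nd/2}M^{1/2}|P|^{1/2}$ therefore forces $|P|\gg M/q^{nd}$. Your $P$ violates this once $M\gg q^{(n+1/2)d}$, and near $M\approx|\cG|/2$ it has only $O(1)$ points.

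The last line of your sketch hides this. The quantities $M(q^{-d}|P|)^{1/2}$ and $q^{nd/2}M^{1/2}|P|^{1/2}$ differ by the factor $(M/q^{(n+1)d})^{1/2}=|P|^{-1/2}$. So even a successful top-$M$ estimate would leave you a factor $|P|^{1/2}$ short throughout the range.

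The moment method is also ill-suited here. Since $q^{-d}|P|=q^{nd}/M\le1$, you have $X_\ell=-q^{-d}|P|$ for most $\ell$, and the fourth moment of $X_\ell$ is at least of the order of the second. Paley--Zygmund then certifies only about $q^{-d}|P|\,|\cG|$ flats with large deviation, which is fewer than $M$ in the same range.

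No averaging is needed: take $i=n$ in the star construction. In affine coordinates $(x,y)\in\Fq^n\times\Fq^d$, let $P$ consist of $R=\lceil M/q^{nd}\rceil$ distinct points $(0,b_r)$. Let $\cL$ consist of $M$ of the closures of the graphs $\{(x,Ax+b_r)\}$. Each graph meets $\{x=0\}$ in exactly one point, so $|P|=R$ and $I(P,\cL)=M$. The normalized discrepancy is then $(1-R/q^d)M/R^{1/2}\gg q^{nd/2}M^{1/2}$. The hypotheses $M<|\cG|/2$ and $|\cG|=(1+O(q^{-1}))q^{(n+1)d}$ give $R/q^d\le\frac12+O(q^{-1})$, which is exactly where large $q$ is used.
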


\begin{proof}
Set
\[
 A_i(M)=q^{(i\delta+n-i)/2}M^{1/2},
 \qquad
 B_i(M)=q^{(n-i-1)/2}M.
\]
Since
\begin{equation}\label{eq:A-over-B}
 \frac{A_i(M)}{B_i(M)}
 =\left(\frac{q^{i\delta+1}}{M}\right)^{1/2},
\end{equation}
$A_i(M)$ is the larger term for $M\leq q^{i\delta+1}$, whereas
$B_i(M)$ is the larger term for $M\geq q^{i\delta+1}$.  It therefore suffices to construct examples attaining the larger term in each of the two ranges.

\medskip
\noindent
\emph{The case $d>n$.}
Here $\nu=n$ and $\delta=d$.  Work in an affine chart of
$\PG(n+d,q)$ with coordinates
\[
 (z,x,y)\in
 \mathbb F_q^{\,n-i}\times
 \mathbb F_q^{\,i}\times
 \mathbb F_q^{\,d}.
\]
For $A\in\operatorname{Mat}_{d\times i}(\mathbb F_q)$ and
$b\in\mathbb F_q^d$, let $\ell_{A,b}$ be the projective closure of
\begin{equation*}
 \{(z,x,Ax+b):
   z\in\mathbb F_q^{\,n-i},\ x\in\mathbb F_q^i\}.
\end{equation*}
For fixed $b$, the $q^{id}$ flats $\ell_{A,b}$ form an affine cell in
the star with core equal to the projective closure of
\[
 C_b^{\circ}
 =\{(z,0,b):z\in\mathbb F_q^{\,n-i}\}.
\]

First suppose that $q^{id}\leq M\leq q^{id+1}$, and put
\[
 R=\left\lceil\frac{M}{q^{id}}\right\rceil.
\]
Thus $1\leq R\leq q$.  Choose distinct
$b_1,\ldots,b_R\in\mathbb F_q^d$, choose exactly $M$ flats from the
corresponding $R$ cells,
and set
\[
 P=\bigcup_{r=1}^R C_{b_r}^{\circ}.
\]
The sets $C_{b_r}^{\circ}$ are pairwise disjoint, so
\[
 |P|=Rq^{n-i}.
\]
Moreover, each selected flat from the $r$th cell contains all
$q^{n-i}$ points of $C_{b_r}^{\circ}$ and no point of
$C_{b_s}^{\circ}$ for $s\neq r$.  Hence
\[
 I(P,\cL)=Mq^{n-i}.
\]
Consequently,
\begin{align}
 \frac{I(P,\cL)-q^{-d}|P|M}{|P|^{1/2}}
 &=
 (1-Rq^{-d})\frac{Mq^{(n-i)/2}}{R^{1/2}}.
 \label{eq:star-core-discrepancy}
\end{align}
Since $d>n\geq1$ and $R\leq q$, the factor $1-Rq^{-d}$ is bounded
below by a positive absolute constant.  Also
\[
 R\leq 2\frac{M}{q^{id}},
\]
so \eqref{eq:star-core-discrepancy} gives
\begin{equation*}
 \frac{I(P,\cL)-q^{-d}|P|M}{|P|^{1/2}}
 \gg q^{(id+n-i)/2}M^{1/2}=A_i(M).
\end{equation*}

For the second term, observe that all flats of the form $\ell_{A,b}$
contain the same $(n-i-1)$-flat $K$ at infinity, namely the flat of
directions of the $z$-coordinate subspace.  Thus
\[
 |K|=\frac{q^{n-i}-1}{q-1}\asymp q^{n-i-1}.
\]
There are $q^{(i+1)d}$ flats of the form $\ell_{A,b}$.  Hence, for any
$M\leq q^{(i+1)d}$, choose any $M$ of them and take $P=K$.  Then
$I(P,\cL)=|P|M$, and therefore
\begin{align*}
 \frac{I(P,\cL)-q^{-d}|P|M}{|P|^{1/2}}
 &=(1-q^{-d})M|P|^{1/2}\notag\\
 &\asymp q^{(n-i-1)/2}M=B_i(M).
\end{align*}
Together with \eqref{eq:A-over-B}, this proves
\eqref{eq:construction-lower-bound} when $d>n$.

To prove \eqref{eq:construction-terminal-bound}, use the same
construction with $i=n$, so that the affine cores are the single
points $(0,b)$. Put
\[
 R=\left\lceil\frac{M}{q^{nd}}\right\rceil.
\]
Since
\[
 |\cG|=\qbinom{n+d+1}{n+1}
       =(1+O(q^{-1}))q^{(n+1)d},
\]
the assumption on $M$ gives
\[
 \frac{R}{q^d}\leq\frac12+O(q^{-1}).
\]
Thus, for all sufficiently large $q$, we can choose distinct
$b_1,\ldots,b_R\in\mathbb F_q^d$. Select exactly $M$ flats from the
corresponding cells and put $P=\{(0,b_1),\ldots,(0,b_R)\}$.
Every selected flat contains exactly one point of $P$, so
$|P|=R$ and $I(P,\cL)=M$. Using also $R\leq2M/q^{nd}$, we obtain
\[
 \frac{I(P,\cL)-q^{-d}|P|M}{|P|^{1/2}}
 =\left(1-\frac{R}{q^d}\right)\frac{M}{\sqrt R}
 \gg q^{nd/2}M^{1/2},
\]
as required.

\medskip
\noindent
\emph{The case $d\leq n$.}
Here $\nu=d-1$ and $\delta=n+1$, so $i\leq d-2$.  Work in an affine
chart with coordinates
\[
 (x,y_1,\ldots,y_d)\in\mathbb F_q^n\times\mathbb F_q^d.
\]
For $b\in\mathbb F_q$, let
\[
 H_b=\{(x,y):y_{i+1}=b,\ y_{i+2}=\cdots=y_d=0\}.
\]
This is an affine $(n+i)$-flat.  If
$f_1,\ldots,f_i:\mathbb F_q^n\to\mathbb F_q$ are affine functions,
let $\ell_{f,b}$ be the projective closure of
\[
 \{(x,f_1(x),\ldots,f_i(x),b,0,\ldots,0):x\in\mathbb F_q^n\}.
\]
For each fixed $b$, there are exactly $q^{i(n+1)}$ such projective $n$-flats,
all contained in the projective closure of $H_b$.

First suppose that
$q^{i(n+1)}\leq M\leq q^{i(n+1)+1}$, and put
\[
 R=\left\lceil\frac{M}{q^{i(n+1)}}\right\rceil.
\]
Again $1\leq R\leq q$.  Choose distinct
$b_1,\ldots,b_R\in\mathbb F_q$, choose exactly $M$ flats from the
corresponding $R$ collections, with at least one from each, and let
$P$ be the union of the affine point sets of
$H_{b_1},\ldots,H_{b_R}$.  These affine flats are pairwise disjoint,
so
\[
 |P|=Rq^{n+i},
 \qquad
 I(P,\cL)=Mq^n.
\]
It follows that
\begin{equation}\label{eq:dual-union-discrepancy}
 \frac{I(P,\cL)-q^{-d}|P|M}{|P|^{1/2}}
 =
 (1-Rq^{i-d})\frac{Mq^{(n-i)/2}}{R^{1/2}}.
\end{equation}
Since $i\leq d-2$ and $R\leq q$,
\[
 1-Rq^{i-d}\geq1-q^{-1}\gg1.
\]
Also
\[
 R\leq 2\frac{M}{q^{i(n+1)}}.
\]
Therefore \eqref{eq:dual-union-discrepancy} gives
\begin{align*}
 \frac{I(P,\cL)-q^{-d}|P|M}{|P|^{1/2}}
 &\gg q^{\{i(n+1)+n-i\}/2}M^{1/2}\notag\\
 &=q^{(i+1)n/2}M^{1/2}\notag\\
 &=q^{(i\delta+n-i)/2}M^{1/2}
 =A_i(M).
\end{align*}

For the second term, let
\[
 W=\{(x,y):y_{i+2}=\cdots=y_d=0\}.
\]
This is an affine $(n+i+1)$-flat.  The graphs of arbitrary
$(i+1)$-tuples of affine functions $\mathbb F_q^n\to\mathbb F_q$
give exactly $q^{(i+1)(n+1)}$ distinct $n$-flats contained in $W$.
Thus, for any $M\leq q^{(i+1)(n+1)}$, choose any $M$ of these flats
and take $P$ to be the affine point set of $W$.  Then
\[
 |P|=q^{n+i+1},
 \qquad
 I(P,\cL)=Mq^n,
\]
and consequently
\begin{align*}
 \frac{I(P,\cL)-q^{-d}|P|M}{|P|^{1/2}}
 &=(1-q^{i+1-d})q^{(n-i-1)/2}M\notag\\
 &\gg q^{(n-i-1)/2}M=B_i(M).
 \label{eq:dual-second-term}
\end{align*}
This proves \eqref{eq:construction-lower-bound} when $d\leq n$.
Finally, \eqref{eq:A-over-B} shows that in either case one of the two
constructions attains the larger of $A_i(M)$ and $B_i(M)$; this is at
least half their sum.  Substituting $M=q^m$ gives
\eqref{eq:construction-exponent}.
\end{proof}

When $\delta\geq2\nu\geq2$, the lower bound in
\cref{prop:lower-bound-constructions} agrees, up to the implied
constant, with the upper bound in
\cref{thm:main} throughout
\[
 1\leq M\leq q^{\nu\delta}.
\]
In the original parameters, the condition $\delta \geq 2 \nu \geq 2$ is $d\geq2n$ for the star
construction and $n\geq2d-3$ for the contained-flat construction.
Thus the standard bound is sharp throughout $1\leq M\leq q^{nd}$
when $d\geq2n$, and the dual-standard bound is sharp throughout
$1\leq M\leq q^{(n+1)(d-1)}$ when $n\geq2d-3$ and $d\geq2$.
For $d\geq2n$, \eqref{eq:construction-terminal-bound} also matches
\cref{thm:vinh} whenever $q^{nd}\leq M<|\cG|/2$, where $\cG$
is the set of all $n$-flats. Consequently, the minimum of
\cref{cor:standard} and \cref{thm:vinh} is sharp for every integer
$1\leq M<|\cG|/2$, for all sufficiently large $q$.

\section{Applications}\label{sec:applications}

\subsection{Rich flats}

First, we prove a bound on the number of rich flats.
For a nonempty set of points $P\subseteq\PG(n+d,q)$, put
\[
 \mathcal R_t(P)
 =\{\ell:\ell\text{ is an $n$-flat and }|\ell\cap P|\geq t\},
 \qquad R_t(P)=|\mathcal R_t(P)|,
\]
and write
\[
 \Delta=\Delta_t^P=t-q^{-d}|P|.
\]
We assume throughout this subsection that $\Delta>0$.
For comparison, we first record the standard consequence of
\cref{thm:vinh}.  
The corresponding estimate for rich lines in the
affine plane was explicitly proved by Iosevich, Rudnev, and Zhai
\cite{IosevichRudnevZhai2015}.

\begin{corollary}[Vinh-type bound for rich flats]\label{cor:rich-vinh}
With the notation above,
\begin{equation}\label{eq:rich-vinh}
 R_t(P)\precsim\frac{q^{nd}|P|}{\Delta^2}.
\end{equation}
\end{corollary}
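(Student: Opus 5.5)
Take $\cL=\mathcal R_t(P)$ and apply \cref{thm:vinh} to the pair $(P,\cL)$. If $\cL$ is empty there is nothing to prove, so assume $R_t(P)\geq1$.

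Every $\ell\in\cL$ satisfies $|\ell\cap P|\geq t$. Summing over $\ell$ gives the lower bound
\[
 I(P,\cL)\geq t\,R_t(P),
\]
and hence
\[
 I(P,\cL)-q^{-d}|P|\,|\cL|\geq \bigl(t-q^{-d}|P|\bigr)R_t(P)=\Delta\,R_t(P).
\]
Because $\Delta>0$, the left side is positive, so it equals the absolute value in \eqref{eq:vinh}. \cref{thm:vinh} then yields
\[
 \Delta\,R_t(P)\leq \incdev{P}{\cL}\precsim q^{dn/2}|P|^{1/2}R_t(P)^{1/2}.
\]

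Divide by $\Delta R_t(P)^{1/2}>0$ and square. This gives $R_t(P)\precsim q^{nd}|P|/\Delta^2$, which is \eqref{eq:rich-vinh}.

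The only point needing care is the meaning of $\precsim$. Here it means an inequality with a factor $1+o(1)$ as $q\to\infty$, where the $o(1)$ comes from \cref{thm:vinh} and is uniform over $P$ and $\cL$. Squaring turns $(1+o(1))$ into $(1+o(1))^2=1+o(1)$, so the relation is preserved. Beyond checking this, the argument is direct.
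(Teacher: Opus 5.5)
Your proposal is correct and follows essentially the paper's own proof. You apply \cref{thm:vinh} to $\cL=\mathcal R_t(P)$, use $\Delta R_t(P)\leq I(P,\cL)-q^{-d}|P|R_t(P)$, and then divide by $\Delta R_t(P)^{1/2}$ and square. Your remark that the $1+o(1)$ factor is uniform and survives squaring is a correct justification of the step the paper leaves implicit.
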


\begin{proof}
Write $R=R_t(P)$.  Every member of $\mathcal R_t(P)$ is $t$-rich, so
\begin{equation}\label{eq:rich-basic}
 \Delta R
 \leq I(P,\mathcal R_t(P))-q^{-d}|P|R.
\end{equation}
Applying \cref{thm:vinh} gives
$\Delta R\precsim q^{nd/2}|P|^{1/2}R^{1/2}$.
If $R>0$, division by $\Delta R^{1/2}$ and squaring prove the claim;
the case $R=0$ is immediate.
\end{proof}

The next corollary of \cref{thm:main} bounds the number of rich flats
by inverting the short pieces of $F_{\nu,\delta}$.  A piece of width one
gives a bound with $\Delta^2$ in the denominator, while a piece of width
two gives a bound with $\Delta^4$ in the denominator.

\begin{samepage}
\begin{corollary}[Rich flats]\label{cor:rich-main}
There is a constant $C=C(n,d)>0$ such that, for every $1\leq j\leq\nu$,
\begin{equation}\label{eq:rich-threshold}
 \Delta\geq Cq^{(\rho+j-1)/2}|P|^{1/2}
\end{equation}
implies
\begin{equation}\label{eq:rich-main}
 R_t(P)\ll
 q^{b_j}\left(\frac{q^{\rho+j}|P|}{\Delta^2}\right)^{w_j}.
\end{equation}
In particular, the latter bound is
\begin{equation*}
 R_t(P)\ll
 \begin{cases}
  \displaystyle\frac{q^{b_j+\rho+j}|P|}{\Delta^2},&w_j=1,\\[3mm]
  \displaystyle\frac{q^{b_j+2\rho+2j}|P|^2}{\Delta^4},&w_j=2.
 \end{cases}
\end{equation*}
\end{corollary}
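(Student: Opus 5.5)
The plan is to follow the proof of \cref{cor:rich-vinh}, with \cref{thm:main} in place of \cref{thm:vinh}. Put $R=R_t(P)$ and take $\cL=\mathcal R_t(P)$. If $R=0$ there is nothing to prove, so assume $R\geq1$ and write $R=q^m$. Inequality \eqref{eq:rich-basic} still holds, and \cref{thm:main} then gives
\[
 \Delta R\leq C_0|P|^{1/2}q^{\Omega_{n,d}(m)/2}\leq C_0|P|^{1/2}q^{(\rho+F_{\nu,\delta}(m))/2}.
\]
Squaring, we obtain the basic inequality
\[
 q^{2m}\Delta^2\leq C_0^2|P|\,q^{\rho+F_{\nu,\delta}(m)} .
\]
This is an inequality in the single variable $m$, and the idea is to invert it using the piecewise-linear shape of $F_{\nu,\delta}$.

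The key geometric input is a comparison of $F_{\nu,\delta}$ with a line through the left endpoint of the short piece. For a fixed $1\leq j\leq\nu$, I would show that for all $m\geq b_j$,
\[
 F_{\nu,\delta}(m)\leq 2m+j-\frac{m-b_j}{w_j}\qquad\text{when } b_j\leq m\leq b_j+w_j,
\]
and $F_{\nu,\delta}(m)\leq 2m+j-1$ for $m\geq b_j+w_j$. Both claims follow from the description of the graph.
\begin{itemize}[nosep]
 \item The value at $A_j$ is $2b_j+j$.
 \item The segment $A_jB_j$ has slope $2-1/w_j$ and ends at $B_j$, which lies on the line $y=2m+j-1$.
 \item Afterwards the graph has slopes at most $2$. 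On the pieces $A_{k}B_{k}$ with $k<j$ the slope is strictly below $2$, and $A_k$ lies on $y=2m+k\leq 2m+j-1$.
 \item The terminal pieces have slopes $1$ and $3/2$.
\end{itemize}
So $F_{\nu,\delta}(m)-2m$ is nonincreasing from $B_j$ on and is at most $j-1$ there. The one step needing care is that the breakpoints are ordered, $b_{j-1}\geq b_j+w_j$, so that the polygon is well defined. I take this as given by the construction of $F_{\nu,\delta}$.

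To finish, consider two cases.
\begin{itemize}[nosep]
 \item If $m\leq b_j$, then $R\leq q^{b_j}$. This is at most the right side of \eqref{eq:rich-main}, provided $q^{\rho+j}|P|/\Delta^2\gg1$. That condition should hold because the hypothesis is needed anyway; alternatively, if it fails we have $\Delta^2>q^{\rho+j}|P|$, and then I would show directly that $R$ is small. This trivial regime needs checking, since the bound ought to be meaningful only when the ratio is at least $1$. I expect to use the crude fact $\Delta\leq t\leq q^n$, or to absorb this case into the constant.
 \item If $m\geq b_j+w_j$, the basic inequality gives $\Delta^2\leq C_0^2|P|q^{\rho+j-1}$. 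This contradicts the threshold \eqref{eq:rich-threshold} once $C>C_0$, so this case cannot occur.
\end{itemize}
In the middle range $b_j\leq m\leq b_j+w_j$, the basic inequality becomes
\[
 \Delta^2\leq C_0^2|P|\,q^{\rho+j}\,q^{-(m-b_j)/w_j}.
\]
Hence
\[
 q^{m-b_j}\leq\bigl(C_0^2q^{\rho+j}|P|/\Delta^2\bigr)^{w_j},
\]
which is exactly \eqref{eq:rich-main}. The two explicit forms follow by substituting $w_j\in\{1,2\}$. I expect the main obstacle to be the shape claim about $F_{\nu,\delta}$ in the second paragraph (monotonicity of $F_{\nu,\delta}(m)-2m$ past $B_j$); everything else is routine.
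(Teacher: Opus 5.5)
Your handling of the ranges $m\ge b_j+w_j$ and $b_j\le m\le b_j+w_j$ is correct. Ruling out the first range via monotonicity of $F_{\nu,\delta}(m)-2m$ is equivalent to the paper's passage to a subfamily of size $q^{b_j+w_j}$. The case $m\le b_j$, however, is a genuine gap. There you only get $R\le q^{b_j}$. Writing $X:=q^{\rho+j}|P|/\Delta^2$, this falls short of \eqref{eq:rich-main} by the factor $X^{-w_j}$, so you need $X\gg1$, and nothing provides it.

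Neither of your suggested fixes works:
\begin{itemize}
\item The threshold \eqref{eq:rich-threshold} is a \emph{lower} bound on $\Delta$, so it gives $X\le q/C^2$, which is the wrong direction.
\item The crude facts $\Delta\le t\le\theta_n$ and $t\le|P|$ only give $\Delta^2\ll q^{n}|P|=q^{\rho+\nu}|P|$, that is, $X\gg q^{j-\nu}$.
\end{itemize}
So for $j<\nu$, $X$ can be of order $q^{j-\nu}$ while the hypotheses hold. For example, take $P$ a single $n$-flat and $t=|P|$; then $\Delta\asymp q^n$ and the threshold holds for large $q$. The loss $X^{-w_j}$ is then a positive power of $q$ and cannot be absorbed into the constant.

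The missing ingredient is that the line through $A_j$ and $B_j$, namely $L_j(m)=(2-1/w_j)m+b_j/w_j+j$, dominates $F_{\nu,\delta}$ on all of $[0,b_j+w_j]$, not only on $[b_j,b_j+w_j]$. On $[0,b_j]$ the graph is the polygon with vertices $A_\nu,B_\nu,\ldots,A_{j+1},B_{j+1},A_j$, so it suffices to check the vertices $A_k,B_k$ with $k>j$. These checks reduce to
\[
 b_j-b_k\ge w_j(k-j)
 \quad\text{and}\quad
 b_j-b_k-w_k\ge w_j(k-j-1).
\]
Both follow by summing consecutive gaps, using $b_{k-1}-b_k\ge w_k$ and $b_{i-1}-b_i\ge 2\ge w_j$. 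Each gap equals $\delta$, is at least $\delta$, or equals $2(\delta-\nu)+4i$, as in the proof of \cref{lem:breakpoint-coefficients}.

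With this bound, your middle-range computation goes through verbatim for every $0\le m\le b_j+w_j$; for $m<b_j$ the factor $q^{-(m-b_j)/w_j}$ simply exceeds one. The case split at $b_j$ then disappears, and the argument becomes the paper's.
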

\end{samepage}

Subject to \eqref{eq:rich-threshold}, the ratio of the displayed bound
in \eqref{eq:rich-main} to the Vinh-type bound in
\eqref{eq:rich-vinh}, with implied constants suppressed, is
\begin{equation*}
 \frac{q^{b_j}(q^{\rho+j}|P|/\Delta^2)^{w_j}}
      {q^{nd}|P|/\Delta^2}
 =
 \begin{cases}
  q^{b_j+\rho+j-nd},&w_j=1,\\[1mm]
  \displaystyle q^{b_j+2\rho+2j-nd}\frac{|P|}{\Delta^2},&w_j=2.
 \end{cases}
\end{equation*}
Thus the width-one estimate is asymptotically smaller exactly when
$b_j+\rho+j<nd$.  For width two, the displayed expression is smaller
exactly when $\Delta>q^{(b_j+2\rho+2j-nd)/2}|P|^{1/2}$;
the improvement is asymptotic when the ratio of $\Delta$ to this
threshold tends to infinity.

\begin{proof}[Proof of \cref{cor:rich-main}]
Write $R=R_t(P)$ and assume $R>0$.  Fix $1\leq j\leq\nu$, and
choose $C$ sufficiently large.
If $R\geq q^{b_j+w_j}$, take a subfamily
$\cL_0\subseteq\mathcal R_t(P)$ with $|\cL_0|=q^{b_j+w_j}$.
The identity
\[
 F_{\nu,\delta}(b_j+w_j)=2(b_j+w_j)+j-1
\]
and \cref{thm:main} give
\[
 \Delta|\cL_0|
 \ll |P|^{1/2}q^{(\rho+j-1)/2}|\cL_0|,
\]
contrary to \eqref{eq:rich-threshold}.  Thus, writing $m=\log_qR$,
we have $0\leq m<b_j+w_j$.

We use the following affine upper bound:
\begin{equation}\label{eq:rich-affine-extension}
 F_{\nu,\delta}(m)
 \leq\left(2-\frac1{w_j}\right)m+\frac{b_j}{w_j}+j,
 \qquad 0\leq m\leq b_j+w_j.
\end{equation}
To verify it, first note that the breakpoint formulas give
\[
 b_{k-1}-b_k\geq2\qquad(1\leq k\leq\nu).
\]
Indeed, the gap is $\delta$ for $k=\nu$; for $k<\nu$ it is at
least $\delta$ when $r<3k$, and equals
$2(\delta-\nu)+4k$ otherwise.  Since $w_j,w_k\leq2$, for $k>j$
we obtain
\[
 b_j-b_k\geq w_j(k-j),
 \qquad
 b_j-b_k-w_k\geq w_j(k-j-1).
\]
These inequalities say precisely that the affine function in
\eqref{eq:rich-affine-extension} lies above the vertices $A_k$ and
$B_k$.  It passes through $A_j$ and $B_j$, so linear interpolation
along the polygonal graph proves \eqref{eq:rich-affine-extension}.

Applying \cref{thm:main} to \eqref{eq:rich-basic}, and then using
\eqref{eq:rich-affine-extension}, gives
\[
 \Delta R
 \ll |P|^{1/2}q^{(\rho+j+b_j/w_j)/2}
 R^{1-1/(2w_j)}.
\]
Divide by $R^{1-1/(2w_j)}$ and raise to the power $2w_j$ to obtain
\eqref{eq:rich-main}.
\end{proof}

One may take the minimum of \eqref{eq:rich-vinh} and
\eqref{eq:rich-main} over all indices satisfying
\eqref{eq:rich-threshold}.
When $\nu=0$, there are no such indices and
\eqref{eq:rich-vinh} is the conclusion of the main theorem.

\subsubsection*{The standard and dual-standard ranges}

\cref{cor:rich-main} has simpler formulations in the standard and dual-standard ranges of \cref{cor:standard,cor:dual-standard}.
Let $0\leq i<\nu$ with $i\leq\delta-\nu$, and put $j=\nu-i$.
Then $b_j=i\delta$, $w_j=1$, and $\rho+j=n-i$.
Consequently,
\begin{equation}\label{eq:rich-nonexceptional}
 \Delta\geq Cq^{(n-i-1)/2}|P|^{1/2}
 \quad\Longrightarrow\quad
 R_t(P)\ll\frac{q^{i\delta+n-i}|P|}{\Delta^2}.
\end{equation}
For $i=\nu$, the bound on the right is the Vinh bound, since
$\nu\delta+n-\nu=nd$, and holds without the richness threshold.

In particular, suppose $d\geq n$ and
$0\leq i\leq\min\{n,d-n\}$.  Under the richness hypothesis in
\eqref{eq:rich-nonexceptional},
\begin{equation*}
 R_t(P)\ll\frac{q^{id+n-i}|P|}{\Delta^2}.
\end{equation*}
Similarly, if $n\geq d-2$ and
$0\leq i\leq\min\{d-1,n+2-d\}$, the same hypothesis gives
\begin{equation*}
 R_t(P)\ll\frac{q^{(i+1)n}|P|}{\Delta^2}.
\end{equation*}

\subsection{Exceptional orthogonal projections}
\label{sec:exceptional-projections}

Let $G(d,n+d)$ denote the set of $d$-dimensional linear subspaces of
$\Fq^{n+d}$.  For $V\in G(d,n+d)$, define the orthogonal projection by
$\pi_V(x)=x+V^\perp$, where orthogonality is with respect to the standard
dot product; thus $\pi_V$ takes values in $\Fq^{n+d}/V^\perp$ and its
fibers are affine $n$-flats.  For $A\subseteq\Fq^{n+d}$ with $|A|=q^a$
and $0<s<\min\{a,d\}$, write
\[
 \mathcal E_s(A)
 =\{V\in G(d,n+d):|\pi_V(A)|<q^s\}.
\]
The exceptional-set estimates originating with
Chen~\cite[Theorem~1.2]{Chen2018Projections} give
\begin{equation}\label{eq:proj-chen}
 |\mathcal E_s(A)|\ll q^{C(a,s)},
 \qquad C(a,s)=nd+s-\max\{a,d\}.
\end{equation}
Bright and Gan~\cite{BrightGanFinite2023} proved the further estimate
\begin{equation}\label{eq:proj-bright-gan}
 |\mathcal E_s(A)|\ll (\log q)q^{G(a,s)},
 \qquad G(a,s)=\max\{nd+2(s-a),\,n(d-1)\},
\end{equation}
provided $a-2s>n-d$.

These questions originate in Euclidean projection theory.
Marstrand's projection theorem \cite{Marstrand1954}, extended to higher
dimensions by Mattila~\cite{Mattila1975}, states that the orthogonal
projection of a Borel set $E\subseteq\mathbb R^{n+d}$ onto almost every
$d$-dimensional subspace has Hausdorff dimension
$\min\{d,\dim_H E\}$.  Refinements due to Kaufman, Falconer, and
Peres--Schlag~\cite{Kaufman1968,Falconer1982,PeresSchlag2000} bound the
Hausdorff dimension of the exceptional directions, rather than merely
showing that they have measure zero.

Influence runs between the continuous and finite settings in both directions.  Chen's work adapts these
continuous exceptional-set estimates to finite fields.  Conversely,
finite-field results led Lund, Pham, and Thu~\cite{LundPhamThu2025} to
formulate a conjecture for continuous radial projections, subsequently
proved by Bright and Gan~\cite{BrightGanRadial2023} in all dimensions
and by Orponen, Shmerkin, and Wang~\cite{OrponenShmerkinWang2024} in the
plane.

We prove a new bound on the number of exceptional orthogonal projections that follows easily from \cref{thm:main}.
Our new estimate improves earlier bounds when $s$ is much smaller than $a/2$.
The precise (and somewhat complicated) range for which we obtain improvements is discussed after the proof.

\begin{corollary}[Exceptional orthogonal projections]
\label{cor:exceptional-projections}
Fix $0<a<n+d$ and $0<s<\min\{a,d\}$, and let
$A\subseteq\Fq^{n+d}$ satisfy $|A|=q^a$.  Retain the parameters
$\nu,\delta,\rho,b_j,w_j$ of \cref{thm:main}.
For every $1\leq j\leq\nu$ such that
\begin{equation}\label{eq:proj-hypothesis}
 a-2s>\rho+j-1,
\end{equation}
we have, for all sufficiently large $q$,
\begin{equation}\label{eq:proj-main}
 |\mathcal E_s(A)|\ll q^{B_j(a,s)},
 \qquad
 B_j(a,s)=b_j+w_j(\rho+j-a)+(2w_j-1)s.
\end{equation}
\end{corollary}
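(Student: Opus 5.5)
The plan is to reduce the statement to \cref{cor:rich-main}, the rich-flat bound. For each $V\in\mathcal E_s(A)$, the fibers of $\pi_V$ are the affine $n$-flats $x+V^\perp$. The set $A$ is covered by fewer than $q^s$ of these fibers, so some fibers are very rich. We pass to $\PG(n+d,q)$ by taking projective closures of these fibers, with $P=A$ regarded as a set of affine points. Distinct subspaces $V$ give distinct directions $V^\perp$ and hence distinct projective $n$-flats, so every flat we pick below comes from exactly one $V$.

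Fix $V\in\mathcal E_s(A)$. The fibers meeting $A$ number fewer than $q^s$, and their incidences with $A$ sum to $q^a$. We keep the fibers containing at least $t:=\tfrac12 q^{a-s}$ points of $A$. The poor fibers contain fewer than $q^s\cdot t=\tfrac12q^a$ points in total, so the rich fibers contain at least $\tfrac12 q^a$ points. Each fiber has at most $q^n$ points, so there is at least one rich fiber. We select exactly one rich flat $\ell_V$ for each $V$. This gives a family $\cL$ with $|\cL|=|\mathcal E_s(A)|$, and every member of $\cL$ is $t$-rich with respect to $P$. Therefore $|\mathcal E_s(A)|\le R_t(P)$.

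We now apply \cref{cor:rich-main} with $\Delta=t-q^{-d}|P|=\tfrac12q^{a-s}-q^{a-d}$. Since $s<d$, we have $\Delta\gg q^{a-s}$ once $q$ is large. The threshold \eqref{eq:rich-threshold} asks for $q^{a-s}\gg Cq^{(\rho+j-1)/2}q^{a/2}$, which is the same as $q^{(a-2s-\rho-j+1)/2}\gg C$. This holds for all sufficiently large $q$, by the strict inequality \eqref{eq:proj-hypothesis} and the fact that the exponents are fixed. Then \eqref{eq:rich-main} gives
\[
 |\mathcal E_s(A)|\le R_t(P)\ll q^{b_j}\bigl(q^{\rho+j+a-2(a-s)}\bigr)^{w_j}
 =q^{b_j+w_j(\rho+j-a)+2w_js}.
\]
This exponent has $2w_js$ in place of the $(2w_j-1)s$ in the statement, so it is too weak by a factor $q^{s}$.

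The main obstacle is recovering this factor $q^s$. The fix is to stop selecting one rich fiber per $V$ and instead use all rich fibers of each $V$, working with \cref{thm:main} directly. We let $\cL$ be the set of all pairs (rich fiber, $V$), so that $|\cL|=q^m$ with $q^m\le q^s|\mathcal E_s(A)|$. The total incidence count is $I(P,\cL)\ge\tfrac12 q^a|\mathcal E_s(A)|$, while the expected count is $q^{-d}|P||\cL|\le q^{a+s-d}|\mathcal E_s(A)|$, which is negligible since $s<d$. Hence the discrepancy is $\gg q^{a-s}|\cL|$.

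We then argue as in the proof of \cref{cor:rich-main}. First, \eqref{eq:proj-hypothesis} forces $m<b_j+w_j$, because otherwise we could pass to a subfamily and contradict \cref{thm:main}. Next, the affine bound \eqref{eq:rich-affine-extension} gives $q^{a-s}|\cL|\ll q^{a/2}q^{(\rho+j+b_j/w_j)/2}|\cL|^{1-1/(2w_j)}$, which yields $|\cL|\ll q^{b_j}q^{w_j(\rho+j-a+2s)}$. Finally, $|\mathcal E_s(A)|\le|\cL|$, since each $V$ contributes at least one rich fiber. This again produces the exponent $2w_js$, so the missing factor $q^s$ is still not recovered.

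To obtain $(2w_j-1)s$, I would instead bound $|\cL|$ from below by $q^{-s}$ times the incidence mass, without losing the factor $q^s$ in the conversion. The point is to compare $|\mathcal E_s(A)|$ with $q^{-s}|\cL|$ rather than with $|\cL|$, and to use all fibers of each $V$ together with the discrepancy $\gg q^{a}|\mathcal E_s(A)|$. That is, I would set $E=|\mathcal E_s(A)|$ and bound $|\cL|\le q^sE$, so that $q^aE\ll q^{a/2}q^{\Omega/2}$ with $\Omega$ evaluated at $m\le s+\log_qE$.

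Substituting the affine bound gives $q^{a}E\ll q^{a/2}q^{(\rho+j+b_j/w_j)/2}(q^sE)^{1-1/(2w_j)}$. Solving for $E$ gives $E^{1/(2w_j)}\ll q^{(\rho+j-a)/2+b_j/(2w_j)+s(1-1/(2w_j))}$, that is, $E\ll q^{b_j+w_j(\rho+j-a)+(2w_j-1)s}$, which is exactly $B_j(a,s)$. The step needing care is checking the range condition $m<b_j+w_j$ under \eqref{eq:proj-hypothesis}. If it fails, we truncate to a subfamily of size $q^{b_j+w_j}$ in order to reach a contradiction, as in \cref{cor:rich-main}.
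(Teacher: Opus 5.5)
Your final argument is correct and is essentially the paper's proof. You take the fibers over all $V\in\mathcal E_s(A)$, use $|\cL|\le q^sE$ together with a discrepancy $\gg q^aE$, insert the affine bound \eqref{eq:rich-affine-extension}, and solve for $E$. The key point is to lower-bound the discrepancy by $q^aE$ rather than by $q^{a-s}|\cL|$; the latter is what cost you the factor $q^s$ in your first two attempts. Your algebra giving $B_j(a,s)$ is correct.

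The only real difference is how you handle the range $\log_q|\cL|\ge b_j+w_j$. The paper does not truncate. It notes that
$F_{\nu,\delta}(m)\le\max\{(2-1/w_j)m+b_j/w_j+j,\,2m+j-1\}$ for all $m$, because $F_{\nu,\delta}(m)-2m$ is nonincreasing and equals $j-1$ at $m=b_j+w_j$. The resulting extra term $q^{(a+\rho+j-1)/2+s}E$ is $o(q^aE)$ by \eqref{eq:proj-hypothesis}. This works with all fibers meeting $A$ and never uses richness of individual flats.

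Your truncation route is also valid, but only for the family of $\tfrac12q^{a-s}$-rich fibers from your second attempt. That family still carries at least $\tfrac12q^aE$ incidences and still satisfies $|\cL|\le q^sE$. It does not work with literally "all fibers of each $V$", as your last paragraph says. There, a subfamily of size $q^{b_j+w_j}$ may consist of fibers meeting $A$ in a single point, and then it gives no contradiction. So either keep the rich-fiber family throughout, or use the paper's max bound, which avoids the issue.
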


\begin{proof}
Fix $j$, and abbreviate $b=b_j$ and $w=w_j$.
Put $N=|\mathcal E_s(A)|$, and assume $N>0$.
For each $V\in\mathcal E_s(A)$, take all affine $n$-flats parallel to
$V^\perp$ that meet $A$, and let $\cL$ be the union of these families.
Distinct choices of $V$ yield distinct directions, and the intersections of the flats in each family with $A$ form a partition of $A$.
Consequently,
\[
 R:=|\cL|\leq q^sN,
 \qquad I(A,\cL)=q^aN.
\]
Identify these affine flats with their projective closures in
$\PG(n+d,q)$.  Since $s<d$, for sufficiently large $q$ we have
\begin{equation}\label{eq:proj-deviation}
 I(A,\cL)-q^{-d}|A|R
 \geq (1-q^{s-d})q^aN
 \geq \tfrac12q^aN.
\end{equation}
The exponent function in \cref{thm:main} satisfies
\[
 F_{\nu,\delta}(m)
 \leq
 \max\left\{
  \left(2-\frac1w\right)m+\frac bw+j,\,
  2m+j-1
 \right\}.
\]
For $m\leq b+w$, this follows from
\eqref{eq:rich-affine-extension}.  For $m\geq b+w$, it follows because
$F_{\nu,\delta}(m)-2m$ is nonincreasing and takes the value $j-1$ at
$m=b+w$.  Therefore \cref{thm:main},
\eqref{eq:proj-deviation}, and $R\leq q^sN$ give
\[
 q^aN
 \ll
 q^{(a+\rho+j+b/w)/2}(q^sN)^{1-1/(2w)}
 +q^{(a+\rho+j-1)/2+s}N.
\]
By \eqref{eq:proj-hypothesis}, the second term is $o(q^aN)$ and can be
absorbed into the left-hand side.  Dividing by $q^aN^{1-1/(2w)}$ yields
\[
 N^{1/(2w)}
 \ll q^{(\rho+j+b/w-a)/2+s(1-1/(2w))}.
\]
Raising to the power $2w$ proves \eqref{eq:proj-main}.
\end{proof}

\paragraph{Comparison with earlier bounds.}
Condition~\eqref{eq:proj-hypothesis} implies $a-2s>n-d$, so the
Bright--Gan bound applies throughout the range of
\cref{cor:exceptional-projections}.  If $a-s\geq n$, then
$\mathcal E_s(A)=\varnothing$, since each fiber contains at most $q^n$
points.  We may therefore assume $0<\tau:=a-s<n$.
Put $h=a-2s$ and $u_+=\max\{u,0\}$, and for each admissible $j$ define
\[
 D_j=nd-b_j-w_j(\rho+j)+(w_j-1)h.
\]
The three exponents can then be written as
\[
 \begin{aligned}
 B_j(a,s)&=nd+s-a-D_j,\\
 C(a,s)&=nd+s-a-(d-a)_+,\\
 G(a,s)&=nd+s-a-\min\{\tau,n-\tau\}.
 \end{aligned}
\]
Thus the $j$th estimate gives a strict power saving over both
\eqref{eq:proj-chen} and \eqref{eq:proj-bright-gan} exactly when
\begin{equation*}
 D_j>
 \max\bigl\{(d-a)_+,\,\min\{\tau,n-\tau\}\bigr\}.
\end{equation*}
When $B_j(a,s)=G(a,s)$, it still removes the logarithmic factor from
\eqref{eq:proj-bright-gan}.  One may take the minimum over all
admissible $j$, together with the two earlier bounds.

\subsection{Furstenberg sets}\label{sec:furstenberg}

Furstenberg sets over finite fields generalize Kakeya sets by requiring large intersections with affine flats in prescribed directions. More precisely, an $(n,t,\delta)$-Furstenberg set in $\mathbb F_q^{n+d}$ contains at least $t$ points on some affine $n$-flat in each of at least a $\delta$-fraction of the $n$-dimensional directions. The higher-dimensional Kakeya problem, corresponding to $t=q^n$ and $\delta=1$, was studied by Ellenberg, Oberlin, and Tao~\cite{EOT}, while Ellenberg and Erman~\cite{EE} established general lower bounds for sets containing a rich flat in every direction. Dhar, Dvir, and Lund~\cite{DharDvirLund2021} subsequently gave elementary proofs and improved bounds, including estimates approaching the natural scale $tq^d$ when $t$ is sufficiently large. More recently, Kumar and Mon~\cite[Theorem~2.20]{KM} obtained bounds for a fraction of directions with linear dependence on $\delta$ and an error factor controlled by the codimension and richness rather than the ambient dimension, with applications to linear hashing and list decoding. Here we apply our incidence estimates both directly and through descent to smaller flats, obtaining further improvements in ranges where the fraction of rich directions is small.

Throughout this subsection, $\delta$ denotes the fraction of directions, consistently with its use as a failure probability in \cite{LundTwoSided}; the normalized codimension parameter
in the definition of $\Omega$ is unrelated. All implied constants
depend only on $n,d$. Affine flats are identified with their projective
closures when applying our incidence theorems.

Our first result is a direct application of \cref{thm:main}.
It does not depend on the directions of the rich flats, and instead only uses the fact that there are  many rich flats relative to a Furstenberg set.

\begin{proposition}[Direct incidence bound]\label{prop:furst-direct}
Let $K$ be an $(n,t,\delta)$-Furstenberg set, and put
\[
 m_0=\max\{0,nd+\log_q\delta\},\qquad
 Q_0=q^{2m_0-\Omega_{n,d}(m_0)}.
\]
Then
\begin{equation}\label{eq:furst-direct}
 |K|\gg\min\{tq^d,t^2Q_0\}.
\end{equation}
\end{proposition}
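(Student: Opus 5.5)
For each of the rich directions, choose one affine $n$-flat in that direction meeting $K$ in at least $t$ points. This gives a family $\cL$ of $n$-flats, pairwise distinct because their directions differ. The number of $n$-dimensional directions in $\mathbb F_q^{n+d}$ is $\qbinom{n+d}{n}\asymp q^{nd}$, so $|\cL|\gg\delta q^{nd}$.

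The key step is to discard flats so that $|\cL|\asymp q^{m_0}$ exactly. If $nd+\log_q\delta\geq0$, keep about $\delta q^{nd}=q^{m_0}$ flats. Otherwise keep a single flat, so $m_0=0$. Passing to a subfamily is harmless, because each remaining flat is still $t$-rich. Since $\Omega_{n,d}$ is a minimum of continuous piecewise-linear functions of bounded slope, changing $m$ by $O(1)$ in the logarithm, i.e.\ changing $|\cL|$ by a constant factor, changes $q^{\Omega/2}$ only by a constant factor. So we may treat $|\cL|=q^{m_0}$.

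Take $P=K$. Then $I(K,\cL)\geq t|\cL|$. Suppose $|K|\leq c\,tq^d$ for a small constant $c$; otherwise the first alternative in \eqref{eq:furst-direct} holds. Then $q^{-d}|K||\cL|\leq ct|\cL|$, and so
\[
 \tfrac12 t|\cL|\leq I(K,\cL)-q^{-d}|K||\cL|\ll |K|^{1/2}q^{\Omega_{n,d}(m_0)/2}
\]
by \cref{thm:main}. Squaring and using $|\cL|\asymp q^{m_0}$ gives
\[
 |K|\gg t^2q^{2m_0-\Omega_{n,d}(m_0)}=t^2Q_0 .
\]
Combining the two cases proves \eqref{eq:furst-direct}.

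The only delicate point is justifying the reduction to a family of size exactly $q^{m_0}$ when $|\cL|$ might be larger; truncation handles this. We should choose $m_0$ rather than the full $\log_q|\cL|$ because $2m-\Omega(m)$ is nondecreasing: its slopes are at least $0$, since $\Omega$ has slope at most $2$. Truncating to a smaller family therefore never helps, and using exactly $m_0$ costs nothing.
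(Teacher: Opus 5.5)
Your argument is correct and is essentially the paper's proof: take $\lceil\delta q^{nd}\rceil\asymp q^{m_0}$ witnessing flats, apply \cref{thm:main} with $P=K$ under the assumption $|K|<tq^d/2$ (so the discrepancy is at least $t|\cL|/2$), square, and absorb the constant-factor gap between $|\cL|$ and $q^{m_0}$ using that every slope of $\Omega_{n,d}$ lies between one and two. One wording slip: a constant-factor change $c$ in $|\cL|$ shifts $m=\log_q|\cL|$ by $\log_q c=O(1/\log q)$, not by an arbitrary $O(1)$ amount, which could cost a power of $q$; the slope bound $\leq 2$ then turns this shift into at most a factor $c$ in $q^{\Omega_{n,d}/2}$.
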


\begin{proof}
Write $s=|K|$. There are $\qbinom{n+d}{n}\geq q^{nd}$ directions,
so we can choose a family $\cL$ of $\lceil\delta q^{nd}\rceil$
witnessing flats. If $s<tq^d/2$, Theorem~\ref{thm:main} gives
\[
 \frac t2|\cL|\leq
 I(K,\cL)-q^{-d}s|\cL|
 \ll s^{1/2}q^{\Omega_{n,d}(\log_q|\cL|)/2}.
\]
Squaring proves the claim. Replacing $|\cL|$ by $q^{m_0}$ costs only
a constant factor, since these sizes differ by at most a factor two
and all slopes of $\Omega$ lie between one and two.
\end{proof}

Our second result uses \cref{thm:main} together with the incidence-based approach to the Furstenberg problem developed in \cite{DharDvirLund2021}.

\begin{proposition}[Incidence descent]\label{prop:furst-descent}
Let $K$ be an $(n,t,\delta)$-Furstenberg set. For each integer
$1\leq a<n$ with $t\geq16q^a$, put
\[
 m_a=\max\{0,(n-a+1)(d+a)+\log_q\delta\},\qquad
 Q_a=q^{2m_a-\Omega_{n-a,d+a}(m_a)-2a}.
\]
Then
\begin{equation}\label{eq:furst-descent}
 |K|\gg\min\{tq^d,t^2Q_a\}.
\end{equation}
\end{proposition}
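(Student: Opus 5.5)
The plan is to follow the descent strategy of \cite{DharDvirLund2021}. We pass from the rich $n$-flats to a family of rich $(n-a)$-flats, which is much larger, and then apply \cref{thm:main} with parameters $(n-a,d+a)$ in place of $(n,d)$. Write $s=|K|$ and assume $s<tq^d/c$ for a suitable constant $c$; otherwise the first term of the minimum holds and we are done.

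\textbf{Step 1: choose the witnessing flats.} For each of at least $\delta q^{nd}$ directions, pick one affine $n$-flat $\ell$ with $|\ell\cap K|\geq t$. Inside each $\ell$, consider the affine $(n-a)$-flats it contains. The points of $K\cap\ell$ have density at least $t/q^n\geq 16q^{a-n}$ in $\ell$. Apply \cref{thm:vinh} (or a direct second-moment count) inside $\ell\cong\mathbb F_q^n$ to the $(n-a)$-flats of a fixed direction class. This shows that a positive proportion of the $(n-a)$-flats in $\ell$ contain at least $\gg tq^{-a}$ points of $K$. The hypothesis $t\geq16q^a$ is exactly what makes the expected count $tq^{-a}$ large enough to dominate the error term here.

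\textbf{Step 2: control multiplicity and count.} A fixed $(n-a)$-flat $\lambda$ lies in many $n$-flats, but the $n$-flats chosen above have distinct directions. So we count how many of the chosen directions can contain the direction of $\lambda$; this number is at most the number of $n$-dimensional subspaces containing a fixed $(n-a)$-dimensional one, namely $\asymp q^{ad}$. Counting pairs ($\ell$, rich $(n-a)$-subflat of $\ell$), with each $\ell$ contributing $\gg q^{a(n-a+1)}$ rich subflats, gives a family $\cL'$ of $(n-a)$-flats with
\[
 |\cL'|\gg \delta q^{nd}q^{a(n-a+1)}q^{-ad}=\delta q^{(n-a+1)(d+a)-a(a-1)-\cdots}.
\]
Here I would verify by exact bookkeeping that the exponent is $(n-a+1)(d+a)+\log_q\delta$ up to the $2a$ correction that appears in $Q_a$. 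I expect any discrepancy to be absorbed into the $-2a$ in the definition of $Q_a$, so that after truncating we may take $|\cL'|\asymp q^{m_a}$. Each flat in $\cL'$ is $\gg tq^{-a}$-rich. Since $s<tq^d/c$, the deviation for each is $tq^{-a}-q^{-d-a}s\gg tq^{-a}$.

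\textbf{Step 3: apply the main theorem.} Apply \cref{thm:main} in $\PG(n+d,q)$ with $n$-dimension $n-a$ and codimension $d+a$:
\[
 tq^{-a}q^{m_a}\ll s^{1/2}q^{\Omega_{n-a,d+a}(m_a)/2}.
\]
Squaring gives $s\gg t^2q^{2m_a-\Omega_{n-a,d+a}(m_a)-2a}=t^2Q_a$. As in \cref{prop:furst-direct}, replacing $|\cL'|$ by $q^{m_a}$ costs only a constant, because the slopes of $\Omega$ lie in $[1,2]$, so $2m-\Omega(m)$ is nondecreasing. The case $m_a=0$ is handled by taking a single rich flat.

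The main obstacle is Steps 1–2. We must produce a genuinely large family of distinct rich $(n-a)$-flats, with the correct power of $q$ and losing only constants. This requires careful counting of overlaps between the subflat families of different chosen $n$-flats, and a clean within-flat richness lemma that uses $t\geq16q^a$. I would first try a Cauchy–Schwarz/second-moment argument inside each $\ell$ over all $(n-a)$-subflats simultaneously, rather than over one parallel class.
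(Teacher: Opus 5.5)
Your Step 2 loses a factor $q^{d}$, and nothing later in the argument recovers it. If you carry out your bookkeeping exactly, you have $\asymp\delta q^{nd}$ witnessing $n$-flats, each containing $\asymp q^{a(n-a+1)}$ rich $(n-a)$-flats. Dividing by the multiplicity bound $\qbinom{d+a}{a}\asymp q^{ad}$ gives only
\[
 |\cL'|\gg\delta q^{nd+a(n-a+1)-ad}=\delta q^{(n-a+1)(d+a)-d}=q^{m_a-d}.
\]
The $-2a$ in $Q_a$ cannot absorb this deficit. It is already used up in your Step 3 by the richness level $\asymp tq^{-a}$, since $(tq^{-a})^2q^{2m_a-\Omega_{n-a,d+a}(m_a)}=t^2Q_a$ exactly. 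Because $2m-\Omega(m)$ can increase with slope $1$, the deficit carries through to the final bound. For example, take $(n,d,a)=(3,7,1)$ and $\delta=1$. Then $m_1=24$ and $Q_1=q^{48-40-2}=q^{6}$. With your family size $m_1-d=17$, you get only $q^{34-\Omega_{2,8}(17)-2}=q^{-1}$. That is weaker even than the classical bound $|K|\gg\min\{tq^d,\delta t^2\}$.

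The missing idea is that $\qbinom{d+a}{a}$ is a worst-case multiplicity, whereas you need an average-case statement. Fix the direction $V$ of a subflat and pass to $\mathbb F_q^{n+d}/V\cong\mathbb F_q^{d+a}$. There, rich $(n-a)$-flats with direction $V$ become points. Witnessing $n$-flats whose directions contain $V$ become $a$-flats in distinct directions, each containing $\gg q^a$ of those points. Your bound is sharp only if all these $a$-flats pass through a single point. To get the right count, you must show the point set has size $\gg(\text{fraction of directions})\cdot q^{d+a}$. That is a Kakeya/Furstenberg-type lower bound in the quotient, not something a multiplicity count provides.

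The paper does not count overlaps at all. It first does the local Chebyshev step, which is your Step 1 applied to a random $(n-a)$-subflat of the witnessing flat rather than to a single parallel class; the failure probability is $\le4q^a/t\le1/4$. It then invokes the quotient-and-average counting step \cite[Claim~2.1]{LundTwoSided}. For the family $\mathcal B$ of $t/(2q^a)$-rich $(n-a)$-flats, this gives
\[
 |\mathcal B|\geq c_a\delta\,q^{d+a}\qbinom{n+d}{n-a}\gg q^{m_a},
\]
with $c_a$ depending only on $d$ and $a$. With that input replacing your Step 2, your Step 3 goes through as written.
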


\begin{proof}
We modify the proof of \cite[Theorem~1.5]{LundTwoSided},
retaining its local Chebyshev estimate and quotient-and-average
counting step, and replacing its global Chebyshev estimate by
Theorem~\ref{thm:main}. In that paper's notation, the ambient dimension
is $n+d$, the output dimension is $b=d$, and the descent dimension is $a$.

Let $\mathcal B$ be the family of affine $(n-a)$-flats containing at
least $t/(2q^a)$ points of $K$. In a witnessing $n$-flat, a random
$(n-a)$-flat fails this condition with probability at most $4q^a/t$.
Consequently, for at least half the contained $(n-a)$-directions,
at least half the parallel translates belong to $\mathcal B$.
Apply the first branch of \cite[Claim~2.1]{LundTwoSided}, with
ambient dimension $n+d$, parent dimension $k=n$, subflat dimension
$\ell=n-a$, flag fraction $\alpha=\delta/2$, and density parameter
$\gamma=1/2$. This gives
\[
 \frac{|\mathcal B|}{q^{d+a}\qbinom{n+d}{n-a}}
 \geq c_a\delta,\qquad
 c_a=\frac12\left(\frac{1}{2(1+q^{1-a})}\right)^{d+a}
 \geq\frac12\,4^{-(d+a)}.
\]
Thus $\mathcal B$ contains a family $\cL$ of size comparable to
$q^{m_a}$, with constants depending only on the dimensions.
If $s=|K|<tq^d/4$, its incidence discrepancy is at least
$t|\cL|/(4q^a)$. Apply Theorem~\ref{thm:main} to $(n-a)$-flats of
codimension $d+a$, square, and absorb the constant factor in the
family size as in the preceding proof.
\end{proof}

\paragraph{Comparison with earlier bounds.}
The classical incidence estimate, Theorem~\ref{thm:vinh}, gives
\begin{equation}\label{eq:furst-classical}
 |K|\gg\min\{tq^d,\delta t^2\}.
\end{equation}
For $\delta=1$ and $n>d$, Dhar--Dvir--Lund
\cite[Theorem~3]{DharDvirLund2021} obtain
$(1-q^{d-n}-\sqrt{q^d/t})tq^d$. Their argument extends to a fraction
$\delta$ of the directions, giving \eqref{eq:furst-classical}; more
precisely, Theorem~\ref{thm:vinh} gives
\[
 |K|\geq\left(1-(1+o(1))\sqrt{\frac{q^d}{\delta t}}\right)tq^d.
\]
Since $\Omega_{n,d}(m_0)\leq m_0+nd$,
\eqref{eq:furst-direct} includes \eqref{eq:furst-classical}, up to constants.

For $n\geq2$ and $t\geq16q$, the master theorem
\cite[Theorem~1.5]{LundTwoSided}, with descent dimension one, gives
\begin{equation}\label{eq:furst-lund}
 |K|\gg\min\{tq^d,\delta t^2q^{d-1}\}.
\end{equation}
Indeed, unless $s\geq tq^d/4$, set
$\tau=tq^d/s-1$ and $E=s/q^{d+1}$ in that theorem. Its density
parameter satisfies
$\gamma=1-8tq/(t-s/q^d)^2\geq1/9$, so its DD branch yields
\eqref{eq:furst-lund}. Threshold equality can be handled by a limit
from below. For fixed dimensions and $t/q\to\infty$, this is the
strongest power of $q$ obtained from that master theorem by choosing
the descent dimension: descent by $a$ gives the coefficient
$\delta q^{d-a}$ in place of $\delta q^{d-1}$.
Similarly, $\Omega_{n-a,d+a}(m_a)\leq m_a+(n-a)(d+a)$ shows that
\eqref{eq:furst-descent} contains the corresponding Chebyshev-descent
bound, since $Q_a\geq\delta q^{d-a}$.

The additional gains are principally for small direction fractions.
For fixed dimensions, \eqref{eq:furst-lund} already reaches the scale
$tq^d$ when $\delta t\gg q$, so an improvement in the power of $q$
requires leaving that range. When $\delta$ is small, the witnessing
families enter the ranges where our incidence estimate improves on
the classical one.

\paragraph{The range $d>2n$.}
The direct bound has a particularly simple form here. For
$1\leq j\leq n$ and $q^{-jd}\leq\delta\leq q^{-(j-1)d}$,
\begin{equation*}
 |K|\gg\min\left\{tq^d,
 t^2\min\{\delta q^{j(d-1)},q^{1-j}\}\right\}.
\end{equation*}
This follows by substituting $m_0\in[(n-j)d,(n-j+1)d]$ into the
regular pieces of $\Omega$. In particular, its quadratic coefficient
is larger than that of \eqref{eq:furst-lund} by an unbounded factor
when $\delta=o(q^{-d})$, until the bound is limited by $tq^d$ or
superseded by the trivial bound $|K|\geq t$.
In this range, substitution in the same formulas shows
$Q_a\leq\max\{Q_0,\delta q^{d-1}\}$ for every $1\leq a<n$.
Thus descent does not add a power improvement beyond the larger of
\eqref{eq:furst-direct} and \eqref{eq:furst-lund} when $d>2n$.

\paragraph{Neither new bound dominates the other in general.}
The following examples exhibit a strict gain for each of our bounds.
We take $q\to\infty$ with dimensions fixed. Every entry includes the
trivial bound $|K|\geq t$; the descent column is optimized over $a$.
The first comparison column uses the $\delta$-extension of the
argument behind \cite[Theorem~3]{DharDvirLund2021}, not that theorem
outside its stated hypotheses.
\begin{center}
\renewcommand{\arraystretch}{1.15}
\begin{tabular}{c|c|c|c|c}
$(n,d,t,\delta)$ & Classical & Lund master & Direct & Descent\\ \hline
$(3,7,q^3,q^{-9})$ & $q^3$ & $q^3$ & $q^5$ & $q^4$\\
$(5,3,q^5,q^{-9})$ & $q^5$ & $q^5$ & $q^6$ & $q^7$
\end{tabular}
\end{center}
For the first row,
$\Omega_{3,7}(12)=25$ and $\Omega_{2,8}(15)=30$, giving
$Q_0=q^{-1}$ and $Q_1=q^{-2}$.
For the second,
$\Omega_{5,3}(6)=16$ and $\Omega_{4,4}(11)=23$, giving
$Q_0=q^{-4}$ and $Q_1=q^{-3}$.
Thus the direct argument and incidence descent should both be retained,
together with \eqref{eq:furst-lund} and $|K|\geq t$.

\section{A graph-theoretic lemma}\label{sec:graph-lemma}

For a graph $G$ and a vertex $v\in V(G)$, denote the open neighborhood of $v$ by $N(v)$ and its degree by $d(v)=|N(v)|$. For $L,R\subseteq V(G)$, write
\[
 E(L,R)=\#\{(u,v)\in L\times R:u\sim v\}.
\]
The next lemma follows from an observation of Thomason that several pseudorandom properties of graphs can be established by bounding the number of copies of $C_4$; see \cite[Theorem~2]{thomason1989dense} and \cite[Theorem~11]{KohLundXuYoo} for related statements.

\begin{lemma}\label{lem:graph}
Let $G$ be a bipartite graph with vertex classes $L$ and $U$, and let $p\geq0$ satisfy
\[
 p\leq |L|^{-1}|U|^{-1}E(L,U).
\]
Then, for every $R\subseteq U$,
\begin{equation*}
 \bigl(E(L,R)-p|L||R|\bigr)^2
 \leq |R|
 \sum_{\substack{v_1,v_2\in L\\|N(v_1)\cap N(v_2)|>p^2|U|}}
 |N(v_1)\cap N(v_2)|.
\end{equation*}
\end{lemma}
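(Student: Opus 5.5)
We expand the discrepancy as a sum over the vertices of $R$ and apply Cauchy--Schwarz once; the key is to centre by $p$ and then discard pairs whose codegree is small. For $u\in U$ write $d_L(u)=|N(u)\cap L|$. Then
\[
 E(L,R)-p|L||R|=\sum_{u\in R}\bigl(d_L(u)-p|L|\bigr),
\]
and Cauchy--Schwarz gives
\[
 \bigl(E(L,R)-p|L||R|\bigr)^2\leq |R|\sum_{u\in R}\bigl(d_L(u)-p|L|\bigr)^2 .
\]
The next step is to enlarge $R$ to $U$ in the last sum. This is valid because every summand is nonnegative.

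Next we expand the square over all of $U$:
\[
 \sum_{u\in U}\bigl(d_L(u)-p|L|\bigr)^2=\sum_{u\in U}d_L(u)^2-2p|L|\,E(L,U)+p^2|L|^2|U|.
\]
Double counting gives $\sum_u d_L(u)^2=\sum_{v_1,v_2\in L}|N(v_1)\cap N(v_2)|$, where ordered pairs are counted and the diagonal $v_1=v_2$ is included. The hypothesis $p\le E(L,U)/(|L||U|)$ gives $E(L,U)\ge p|L||U|$. Since $p\ge0$, the middle term is then at most $-2p^2|L|^2|U|$. Hence
\[
 \sum_{u\in U}\bigl(d_L(u)-p|L|\bigr)^2\leq\sum_{v_1,v_2\in L}\Bigl(|N(v_1)\cap N(v_2)|-p^2|U|\Bigr),
\]
because the pair sum of $p^2|U|$ equals $p^2|L|^2|U|$.

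Finally, we drop every pair with $|N(v_1)\cap N(v_2)|\le p^2|U|$, since each such term is nonpositive. For each remaining pair we bound $|N(v_1)\cap N(v_2)|-p^2|U|\le |N(v_1)\cap N(v_2)|$. Combining this with the Cauchy--Schwarz step gives exactly the stated inequality.

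No step is a real obstacle. The only point needing care is the sign bookkeeping: the hypothesis on $p$ must be used in the direction $E(L,U)\ge p|L||U|$, which is what makes the cross term dominate. The sums must also be taken over ordered pairs including the diagonal, so that the constant term matches the double-counted $\sum_u d_L(u)^2$ exactly.
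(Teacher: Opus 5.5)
Your proposal is correct and follows essentially the same argument as the paper: Cauchy--Schwarz over $R$, enlarging the sum to $U$, using $E(L,U)\geq p|L||U|$ with $p\geq0$ to get $\sum_{u}(d(u)-p|L|)^2\leq\sum_{u}d(u)^2-p^2|L|^2|U|$, double counting over ordered pairs, and then discarding the nonpositive terms. Your sign bookkeeping in the cross term matches the paper's (implicit) step exactly.
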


\begin{proof}
Since $\sum_{u\in U}d(u)=E(L,U)\geq p|L||U|$, Cauchy--Schwarz gives
\begin{align*}
 \bigl(E(L,R)-p|L||R|\bigr)^2
 &=\left(\sum_{u\in R}(d(u)-p|L|)\right)^2\\
 &\leq |R|\sum_{u\in R}(d(u)-p|L|)^2\\
 &\leq |R|\sum_{u\in U}(d(u)-p|L|)^2\\
 &\leq |R|\left(\sum_{u\in U}d(u)^2-p^2|L|^2|U|\right)\\
 &=|R|\sum_{v_1,v_2\in L}
 \bigl(|N(v_1)\cap N(v_2)|-p^2|U|\bigr)\\
 &\leq |R|
 \sum_{\substack{v_1,v_2\in L\\|N(v_1)\cap N(v_2)|>p^2|U|}}
 |N(v_1)\cap N(v_2)|.
\end{align*}
\end{proof}

The lemma will be used twice: first for the point--flat incidence graph and then, in a zero-density form, for the graph that records a prescribed intersection dimension between two flats.

\section{New proofs of \texorpdfstring{\cref{thm:vinh,thm:kong-tamo}}{Theorems 1 and 2}}
\label{sec:classical-proofs}
In this section, we show how to use \cref{lem:graph} in place of spectral arguments to prove \cref{thm:vinh,thm:kong-tamo}.

\begin{proof}[Proof of \cref{thm:vinh}]
Put

$$
 \theta_j=\frac{q^{j+1}-1}{q-1},
 \qquad
 p=\frac{\theta_n}{\theta_{n+d}},
$$
and let $\cG$ denote the set of all $n$-flats in
$\PG(n+d,q)$.  Apply \cref{lem:graph} to the incidence graph with
left class $P$, right class $\cG$, and $R=\cL$.

Every point lies on
$$
 r=\qbinom{n+d}{n}
$$
$n$-flats, while two distinct points lie on
$$
 \lambda=\qbinom{n+d-1}{n-1}
$$
$n$-flats.  The incidence structure is a $2$-design, so
$$
 \frac{r}{|\cG|}=p,
 \qquad
 \lambda
 =r\frac{\theta_n-1}{\theta_{n+d}-1}
 <r\frac{\theta_n}{\theta_{n+d}}
 =p^2|\cG|.
$$

Thus only the diagonal pairs of points occur in the sum in
\cref{lem:graph}, and hence
$$
 \bigl|I(P,\cL)-p|P||\cL|\bigr|^2
 \leq r|P||\cL|.
$$
Since
$$
 r=(1+O(q^{-1}))q^{dn},
$$
we obtain
$$
 \bigl|I(P,\cL)-p|P||\cL|\bigr|
 \precsim q^{dn/2}|P|^{1/2}|\cL|^{1/2}.
$$

Finally,
$$
 p=q^{-d}+O(q^{-d-n-1}),
$$
and the resulting change in the main term is
$o\bigl(q^{dn/2}|P|^{1/2}|\cL|^{1/2}\bigr)$, using the trivial
bounds $|P|\leq\theta_{n+d}$ and
$|\cL|\leq\qbinom{n+d+1}{n+1}$.  This proves the theorem.
\end{proof}

\begin{proof}[Proof of \cref{thm:kong-tamo}]
Use the same notation $p=\theta_n/\theta_{n+d}$, but now apply
\cref{lem:graph} to the point--flat incidence graph with left class
$\cL$, right class the full point set of $\PG(n+d,q)$, and $R=P$.
A flat has $\theta_n$ common point-neighbors with itself, while two
distinct $n$-flats have at most $\theta_{n-1}$ common point-neighbors.
Therefore,
$$
 \bigl|I(P,\cL)-p|P||\cL|\bigr|^2
 \leq
 |P|\bigl(\theta_n|\cL|
 +\theta_{n-1}|\cL|^2\bigr).
$$

Since
$$
 \theta_n\asymp q^n,
 \qquad
 \frac{\theta_{n-1}}{\theta_n}<q^{-1},
$$
it follows that
$$
 \bigl|I(P,\cL)-p|P||\cL|\bigr|
 \ll
 q^{n/2}|P|^{1/2}|\cL|^{1/2}
 \bigl(1+q^{-1}|\cL|\bigr)^{1/2}.
$$

As in the preceding proof, replacing $p$ by $q^{-d}$ changes the
left-hand side by a term absorbed by the displayed bound.  This proves
the theorem.
\end{proof}

\section{Initial bounds}\label{sec:initial-bounds}

We now convert the incidence problem into estimates for ordered pairs of flats with a fixed intersection dimension. The first proposition controls the incidence error by these pair counts; the remaining results prepare the recursive inequalities that will be solved in \cref{sec:main-proof}.

Put
\[
 \theta_t=\frac{q^{t+1}-1}{q-1},
 \qquad
 p=\frac{\theta_n}{\theta_{n+d}}.
\]
For $-1\leq k\leq n$, let
\[
 N_k(\cL)=\#\{(\ell_1,\ell_2)\in\cL^2:\dim(\ell_1\cap\ell_2)=k\},
\]
where $\dim(\varnothing)=-1$.

\begin{proposition}[The first Cauchy--Schwarz step]\label{prop:first-cs}
Let $P$ be a set of points in $\PG(n+d,q)$ and let $\cL$ be a set of $n$-flats. Then
\begin{equation}\label{eq:first-cs}
 \bigl|I(P,\cL)-p|P||\cL|\bigr|^2
 \ll |P|
 \sum_{k=\max\{0,n-d+1\}}^n q^k N_k(\cL).
\end{equation}
In particular, the diagonal term $k=n$ contributes $q^{m+n}$ when $|\cL|=M=q^m$.
\end{proposition}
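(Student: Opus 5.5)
We apply \cref{lem:graph} exactly as in the proof of \cref{thm:kong-tamo}. The graph is the point--flat incidence graph with left class $\cL$, right class $U$ equal to the full point set of $\PG(n+d,q)$, and $R=P$. We take $p=\theta_n/\theta_{n+d}$. Every $n$-flat has $\theta_n$ points, so $E(\cL,U)=|\cL|\theta_n$, and therefore $p=|\cL|^{-1}|U|^{-1}E(\cL,U)$. The hypothesis of the lemma thus holds with equality. The lemma gives
\[
 \bigl(I(P,\cL)-p|P||\cL|\bigr)^2\leq |P|\sum_{\substack{\ell_1,\ell_2\in\cL\\ |\ell_1\cap\ell_2|>p^2\theta_{n+d}}}|\ell_1\cap\ell_2|.
\]

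Next we sort the ordered pairs $(\ell_1,\ell_2)$ by $k=\dim(\ell_1\cap\ell_2)$, with $-1\leq k\leq n$. For such a pair, $|\ell_1\cap\ell_2|=\theta_k\leq 2q^k$, with the convention $\theta_{-1}=0$. The number of pairs with intersection dimension $k$ is $N_k(\cL)$ by definition. The sum above is therefore at most $2\sum_k q^kN_k(\cL)$, where $k$ ranges over those dimensions with $\theta_k>p^2\theta_{n+d}$.

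It remains to restrict the range of $k$. First, the dimension formula in $\PG(n+d,q)$ gives $\dim(\ell_1\cap\ell_2)\geq 2n-(n+d)=n-d$, so $N_k(\cL)=0$ for $k<n-d$. Second, we have $p^2\theta_{n+d}=\theta_n^2/\theta_{n+d}$, which equals $(1+O(q^{-1}))q^{n-d}$. This exceeds $\theta_{k}$ whenever $k\leq n-d-1$. The remaining question is whether $k=n-d$ itself is excluded, and this is the one place needing care. If $n-d\geq 0$, we need the inequality $\theta_{n-d}\theta_{n+d}\leq\theta_n^2$. Writing $\theta_t(q-1)=q^{t+1}-1$, this becomes $(q^{n-d+1}-1)(q^{n+d+1}-1)\leq(q^{n+1}-1)^2$, which simplifies to $q^{n-d+1}+q^{n+d+1}\geq 2q^{n+1}$. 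That holds by AM--GM. So the pairs with $k=n-d$ fail the strict condition $|\ell_1\cap\ell_2|>p^2|U|$ and drop out. If $n-d<0$, the pairs with $k=-1$ contribute $\theta_{-1}=0$ anyway, and the pairs with $0\leq k<\max\{0,n-d+1\}$ are those we must handle. When $n<d$ we have $\max\{0,n-d+1\}=0$, so all $k\geq0$ are already in the allowed range. In either case, only $k\geq\max\{0,n-d+1\}$ survive, and \eqref{eq:first-cs} follows.

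Finally, the diagonal term $k=n$ consists exactly of the pairs with $\ell_1=\ell_2$, so $N_n(\cL)=|\cL|=q^m$. Its contribution is therefore $q^{n}q^m=q^{m+n}$, as claimed. The only potential obstacle is the borderline exclusion of $k=n-d$, and the AM--GM inequality above settles it.
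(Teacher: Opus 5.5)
Your proof is correct and is essentially the paper's argument. You apply \cref{lem:graph} with left class $\cL$, right class the full point set and $R=P$, group the ordered pairs by intersection dimension, and exclude the layer $k=n-d$ via $\theta_{n-d}\theta_{n+d}\leq\theta_n^2$. Your AM--GM step is the same computation as the paper's identity $\theta_n^2-\theta_{n-d}\theta_{n+d}=q^{n-d+1}(q^d-1)^2/(q-1)^2$, and the non-strict inequality suffices because the lemma's threshold is strict. Your asymptotic justification for the layers $k\leq n-d-1$ is unnecessary, since those layers are already empty by the dimension formula.
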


\begin{proof}
Let $\cG$ be the set of all $n$-flats and let $U$ be the point set of $\PG(n+d,q)$. In the point--flat incidence graph with vertex classes $\cG$ and $U$, every $n$-flat contains $\theta_n$ points, while $|U|=\theta_{n+d}$. Hence the edge density between any subfamily $\cL\subseteq\cG$ and the full point set $U$ is exactly
\[
 \frac{E(\cL,U)}{|\cL||U|}
 =\frac{\theta_n}{\theta_{n+d}}=p.
\]
We may therefore apply \cref{lem:graph} with left class $\cL$ and right subset $P$.

If two flats $\ell_1,\ell_2\in\cL$ meet in a $k$-flat, then their common point-neighborhood has size $\theta_k$; if they are disjoint, the common neighborhood is empty. The threshold in \cref{lem:graph} is
\[
 p^2|U|=\frac{\theta_n^2}{\theta_{n+d}}.
\]
If $n\geq d$, the exact identity
\[
 \theta_n^2-\theta_{n-d}\theta_{n+d}
 =\frac{q^{n-d+1}(q^d-1)^2}{(q-1)^2}>0
\]
shows that $\theta_{n-d}<p^2|U|$. Since $\theta_k$ increases with
$k$, no layer with $k\leq n-d$ contributes. If $n<d$, every
nonempty intersection already has $k\geq0>n-d$.
Thus in both cases the contributing nonempty layers satisfy
\[
 k\geq\max\{0,n-d+1\}.
\]
Grouping the ordered pairs $(\ell_1,\ell_2)$ according to their intersection dimension, \cref{lem:graph} gives
\[
 \bigl|I(P,\cL)-p|P||\cL|\bigr|^2
 \leq |P|
 \sum_{k=\max\{0,n-d+1\}}^n\theta_k N_k(\cL).
\]
Finally, $\theta_k\ll q^k$ for fixed $k$, which proves \eqref{eq:first-cs}. The identity $N_n(\cL)=|\cL|=M=q^m$ gives the stated diagonal contribution.
\end{proof}

For $-1\leq k\leq n$, let $N_{k,n,d}(M)$ be the maximum of $N_k(\cL)$ over all families of $M$ $n$-flats in $\PG(n+d,q)$.

\begin{lemma}[Projective duality]\label{lem:duality}
For all admissible $k,n,d,M$,
\[
 N_{k,n,d}(M)=N_{k+d-n-1,\,d-1,\,n+1}(M).
\]
\end{lemma}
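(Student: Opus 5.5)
Projective duality gives a bijection between the $n$-flats of $\PG(n+d,q)$ and the $(d-1)$-flats of the same space, since a projective $n$-flat corresponds to an $(n+1)$-dimensional subspace of $\Fq^{n+d+1}$ and its annihilator has dimension $d$. Note that $(d-1)+(n+1)=n+d$, so the target parameters $(d-1,\,n+1)$ describe $(d-1)$-flats in the same ambient space $\PG(n+d,q)$. I will fix a nondegenerate bilinear form on $V=\Fq^{n+d+1}$ and send each $n$-flat $\ell=\mathbb P(W)$ to $\ell^\ast=\mathbb P(W^\perp)$. This map is a bijection, so it carries families of exactly $M$ $n$-flats bijectively to families of exactly $M$ $(d-1)$-flats.

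The key step is to track intersection dimensions. For subspaces $W_1,W_2$ of dimension $n+1$, I will use $(W_1\cap W_2)^\perp=W_1^\perp+W_2^\perp$ together with the dimension formula. Suppose $\dim(W_1\cap W_2)=k+1$, meaning projective dimension $k$, with $k=-1$ for trivial intersection. Then $\dim(W_1^\perp+W_2^\perp)=n+d+1-(k+1)=n+d-k$. Hence
\[
\dim(W_1^\perp\cap W_2^\perp)=2d-(n+d-k)=d-n+k,
\]
so the projective dimension of $\ell_1^\ast\cap\ell_2^\ast$ is $k+d-n-1$. This matches the index shift in the statement, and it is the shift $j\mapsto j-\rho$ mentioned in the introduction when $d\le n$. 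Since the map is a bijection on ordered pairs, $N_k(\cL)=N_{k+d-n-1}(\cL^\ast)$ for every family $\cL$. Taking maxima over all families of size $M$ on both sides gives the claimed equality.

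The only potential subtlety is the meaning of "admissible." When $k+d-n-1<-1$, the corresponding count is zero on both sides, because such intersection dimensions cannot occur by the dimension formula. Likewise, the range of $k$ for $(d-1)$-flats is $-1\le k'\le d-1$, which corresponds exactly to $n-d\le k\le n$, with $k\ge -1$. I would remark that outside these ranges both quantities vanish, or are simply undefined and excluded. No real obstacle is expected; the main care is the bookkeeping between vector and projective dimensions.
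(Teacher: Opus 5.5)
Your proof is correct and is essentially the paper's argument: the paper computes $\dim\langle\ell_1,\ell_2\rangle=2n-k$ and uses $\ell_1^*\cap\ell_2^*=\langle\ell_1,\ell_2\rangle^*$, while you dualize the intersection via $(W_1\cap W_2)^\perp=W_1^\perp+W_2^\perp$ and then apply the dimension formula on the dual side, which is the same computation done in the opposite order. Your bookkeeping of the admissible ranges, with $-1\le k+d-n-1\le d-1$ corresponding to $n-d\le k\le n$, is also correct and is slightly more explicit than what the paper writes.
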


\begin{proof}
Projective duality in $\PG(n+d,q)$ sends $n$-flats to $(d-1)$-flats. If $\cL$ is a family of $M$ $n$-flats, then
\[
 \cL^*=\{\ell^*:\ell\in\cL\}
\]
is a family of $M$ $(d-1)$-flats in
\[
 \PG(n+d,q)=\PG\bigl((d-1)+(n+1),q\bigr).
\]
If $\dim(\ell_1\cap\ell_2)=k$, then $\dim\langle\ell_1,\ell_2\rangle=2n-k$. Duality reverses joins and intersections, so
\[
 \ell_1^*\cap\ell_2^*=\langle\ell_1,\ell_2\rangle^*
\]
and therefore
\[
 \dim(\ell_1^*\cap\ell_2^*)=(n+d)-(2n-k)-1=k+d-n-1.
\]
This gives the claimed bijection between ordered pairs.
\end{proof}

We next work in the normalized model of $\nu$-flats in $\PG(\nu+\delta,q)$, where $\delta>\nu$. Given a family $\cL$ of $M=q^m$ $n$-flats in $\PG(n+d,q)$, define the normalized family
\begin{equation*}
 \widetilde{\cL}
 =
 \begin{cases}
  \cL,&d>n,\\
  \cL^*,&d\leq n.
 \end{cases}
 \qquad
 |\widetilde{\cL}|=|\cL|=M=q^m.
\end{equation*}
Thus $\widetilde{\cL}$ is always a family of $\nu$-flats in $\PG(\nu+\delta,q)$. Define
\[
 N_h(\widetilde{\cL})
 =\#\{(\ell_1,\ell_2)\in\widetilde{\cL}^{\,2}:\dim(\ell_1\cap\ell_2)=h\},
 \qquad -1\leq h\leq \nu.
\]
In particular, $N_\nu(\widetilde{\cL})=M$.

Fix two $\nu$-flats $\ell_1,\ell_2$ with $\dim(\ell_1\cap\ell_2)=k$, and let
\[
 J_{\nu,\delta}(k,h)
 =\#\{\ell:\dim(\ell\cap\ell_1)=\dim(\ell\cap\ell_2)=h\},
\]
where $\ell$ ranges over all $\nu$-flats in $\PG(\nu+\delta,q)$.
Note that $J_{\nu,\delta}(k,h)$ does not depend on the specific choice of $\ell_1,\ell_2$.

\begin{proposition}[The second Cauchy--Schwarz step]\label{prop:second-cs}
For every $0\leq h<\nu$,
\begin{equation}\label{eq:second-cs}
 N_h(\widetilde{\cL})^2
 \leq M\sum_{k=-1}^\nu N_k(\widetilde{\cL})J_{\nu,\delta}(k,h).
\end{equation}
\end{proposition}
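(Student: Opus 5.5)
This is the "zero-density form" of \cref{lem:graph} announced after its proof. Fix $0\leq h<\nu$ and let $\cG$ be the set of all $\nu$-flats in $\PG(\nu+\delta,q)$. Form the bipartite graph $G_h$ with left class $L=\widetilde{\cL}$ and right class $U=\cG$. Join $\ell\in\widetilde{\cL}$ to $\ell'\in\cG$ exactly when $\dim(\ell\cap\ell')=h$. Take $R=\widetilde{\cL}\subseteq\cG$ and $p=0$, which is allowed since $0\leq |L|^{-1}|U|^{-1}E(L,U)$.

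With $p=0$, the left side of \cref{lem:graph} becomes $E(\widetilde{\cL},\widetilde{\cL})^2$. By definition of the graph,
\[
 E(\widetilde{\cL},\widetilde{\cL})=\#\{(\ell_1,\ell_2)\in\widetilde{\cL}^{\,2}:\dim(\ell_1\cap\ell_2)=h\}=N_h(\widetilde{\cL}).
\]
The right side is $|R|=M$ times a sum over ordered pairs $(\ell_1,\ell_2)\in\widetilde{\cL}^{\,2}$. The threshold $p^2|U|=0$ only discards pairs with no common neighbours, so we may keep every pair. This only enlarges the bound, since all terms are nonnegative.

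For such a pair, the common neighbourhood in $G_h$ is
\[
 N(\ell_1)\cap N(\ell_2)=\{\ell\in\cG:\dim(\ell\cap\ell_1)=\dim(\ell\cap\ell_2)=h\},
\]
whose size is $J_{\nu,\delta}(k,h)$ with $k=\dim(\ell_1\cap\ell_2)$. This uses the paper's remark that the count depends only on $k$; that follows from transitivity of $\mathrm{PGL}$ on pairs of $\nu$-flats with a given intersection dimension. Grouping the pairs by $k\in\{-1,\dots,\nu\}$ turns the sum into $\sum_k N_k(\widetilde{\cL})J_{\nu,\delta}(k,h)$, which gives \eqref{eq:second-cs}.

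There is no real obstacle here. The only points needing care are that $R=\widetilde{\cL}$ lies inside the right class $\cG$, and that the lemma's sum runs over ordered pairs including the diagonal $v_1=v_2$. The diagonal is exactly the $k=\nu$ term $N_\nu(\widetilde{\cL})J_{\nu,\delta}(\nu,h)=M\,J_{\nu,\delta}(\nu,h)$. The restriction $h<\nu$ is not needed for the inequality itself, which also holds trivially for $h=\nu$; it reflects how the bound will be used later.
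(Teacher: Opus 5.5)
Your proposal is correct and follows the paper's proof: both apply \cref{lem:graph} with $p=0$ to the $h$-intersection graph between $\widetilde{\cL}$ and all $\nu$-flats $\cG$, taking $R=\widetilde{\cL}$ so that $E(\widetilde{\cL},\widetilde{\cL})=N_h(\widetilde{\cL})$, and then group the ordered pairs by $k=\dim(\ell_1\cap\ell_2)$. The only difference is bookkeeping: the paper stops at the intermediate bound $M\sum_{\ell\in\cG}d_h(\ell)^2$ from the lemma's proof and expands the square by hand, while you quote the lemma's final pair-sum form directly; for $p=0$ these are the same computation.
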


\begin{proof}
Let $\cG$ be the set of all $\nu$-flats in $\PG(\nu+\delta,q)$. For $\ell\in\cG$, define
\[
 d_h(\ell)=\#\{\ell_1\in\widetilde{\cL}:\dim(\ell_1\cap\ell)=h\}.
\]
Since the right-hand member of each ordered pair counted by $N_h(\widetilde{\cL})$ belongs to $\widetilde{\cL}$, we have
\[
 N_h(\widetilde{\cL})=\sum_{\ell\in\widetilde{\cL}}d_h(\ell).
\]
\cref{lem:graph} applied with $p=0$ gives
\begin{align*}
 N_h(\widetilde{\cL})^2
 &\leq M\sum_{\ell\in\widetilde{\cL}}d_h(\ell)^2
 \leq M\sum_{\ell\in\cG}d_h(\ell)^2.
\end{align*}
Expanding the square and interchanging the order of summation, we obtain
\begin{align*}
 \sum_{\ell\in\cG}d_h(\ell)^2
 &=\sum_{\ell_1,\ell_2\in\widetilde{\cL}}
   \#\{\ell\in\cG:\dim(\ell\cap\ell_1)=\dim(\ell\cap\ell_2)=h\}\\
 &=\sum_{k=-1}^{\nu}N_k(\widetilde{\cL})J_{\nu,\delta}(k,h).
\end{align*}
Substitution into the preceding inequality proves \eqref{eq:second-cs}.
\end{proof}

Recall that $\ell_1,\ell_2$ are $\nu$-flats with $\dim(\ell_1\cap\ell_2)=k$.
For fixed $k$ and $h$, if a third $\nu$-flat meets both $\ell_1$ and $\ell_2$ in $h$-flats and the triple intersection has dimension $\sigma$, then the feasible values for $\sigma$ are
\begin{equation}\label{eq:S-def}
 \cS(k,h)=\bigl\{\sigma\in\mathbb Z:
 \max(-1,k+h-\nu,2h-\nu)\leq\sigma\leq\min(k,h)\bigr\}.
\end{equation}
Define
\begin{equation}\label{eq:e-def}
 e_{\nu,\delta}(k,h,\sigma)
 = (\sigma+1)(k-\sigma)
 +2(h-\sigma)(\nu-h)
 +\delta(\nu-2h+\sigma)
\end{equation}
and
\begin{equation}\label{eq:kappa-def}
 \kappa_{kh}=\max_{\sigma\in\cS(k,h)}e_{\nu,\delta}(k,h,\sigma),
\end{equation}
with the convention that $(k,h)$ is absent when $\cS(k,h)=\varnothing$.

For fixed \(h\), \(J_{\nu,\delta}(k,h)\) is the common-neighbor kernel of the \(h\)-intersection graph: it is the number of common \(h\)-neighbors of two \(\nu\)-flats whose intersection has dimension \(k\).

\begin{proposition}[The exact common-neighbor kernel]\label{prop:common-neighbor}
For every feasible pair $(k,h)$,
\[
 J_{\nu,\delta}(k,h)\ll q^{\kappa_{kh}}.
\]
More precisely, for fixed $\nu$ and $\delta$,
\[
 J_{\nu,\delta}(k,h)=(1+o(1))
 \sum_{\sigma\in\cS(k,h)}q^{e_{\nu,\delta}(k,h,\sigma)}.
\]
\end{proposition}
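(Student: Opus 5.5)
We count the $\nu$-flats $\ell$ meeting both $\ell_1$ and $\ell_2$ in $h$-flats by first fixing the triple intersection. Write $\ell_1,\ell_2$ as $(\nu+1)$-dimensional subspaces of $V=\Fq^{\nu+\delta+1}$ whose intersection $K=\ell_1\cap\ell_2$ has vector dimension $k+1$, and put $\sigma=\dim(\ell\cap K)$ projectively. We partition the flats $\ell$ counted by $J_{\nu,\delta}(k,h)$ according to $\sigma$, and show that each class has size $(1+o(1))q^{e_{\nu,\delta}(k,h,\sigma)}$. Since $\cS(k,h)$ has boundedly many elements, summing over it gives the asymptotic formula, and the bound $J_{\nu,\delta}(k,h)\ll q^{\kappa_{kh}}$ follows immediately. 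The feasibility constraints defining \eqref{eq:S-def} come from the dimension inequalities used below: $\sigma\le\min(k,h)$, $\dim(S_1+S_2)\le\nu$ giving $2h-\sigma\le\nu$, and $S_i\supseteq S\cap K$ inside $\ell_i$ giving $\sigma\geq k+h-\nu$.

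For a fixed $\sigma$, we build $\ell$ in stages and count the leading power of $q$ at each stage. Each choice is a point of a Grassmannian or a Schubert cell, and its size is $(1+o(1))q^{\text{dimension}}$.

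\emph{Step 1.} Choose $T=\ell\cap K$, a $\sigma$-flat inside the $k$-flat $K$. This contributes $q^{(\sigma+1)(k-\sigma)}$.

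\emph{Step 2.} Choose $S_1=\ell\cap\ell_1$ with $S_1\cap K=T$. In the quotient $\ell_1/T$, of vector dimension $\nu-\sigma$, this means choosing an $(h-\sigma)$-dimensional subspace meeting the $(k-\sigma)$-dimensional subspace $K/T$ trivially. This is the generic Grassmannian cell, of size about $q^{(h-\sigma)(\nu-h)}$. Choose $S_2$ in $\ell_2$ in the same way, with the same count. Together these give the term $2(h-\sigma)(\nu-h)$. The constraint $\sigma\ge k+h-\nu$ is exactly the condition that such a transversal subspace exists.

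\emph{Step 3.} Let $W=S_1+S_2$, which has vector dimension $2h-\sigma+1$. Extend $W$ to $\ell$ by choosing a $(\nu+1)$-dimensional subspace containing $W$ with $\ell\cap\ell_1=S_1$ and $\ell\cap\ell_2=S_2$ exactly. Modulo $W$, this means choosing a $(\nu-2h+\sigma)$-dimensional subspace of $V/W$, which has dimension $\nu+\delta-2h+\sigma$, avoiding $(\ell_1+\ell_2)/W$ in the appropriate sense. The total count of such subspaces is about $q^{(\nu-2h+\sigma)\delta}$, which matches the last term of \eqref{eq:e-def}. What remains is to show that the exact-intersection condition removes only a $o(1)$ fraction.

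The main obstacle is this genericity claim in Step 3. The span $\ell_1+\ell_2$ has projective dimension $2\nu-k$, which may be large relative to the ambient space. We need the subspaces meeting $(\ell_1\cap\ell)$-type constraints nontrivially to be a negligible proportion of the Grassmannian $\mathrm{Gr}(\nu-2h+\sigma,\ V/W)$. We would handle this by a union bound. The set of subspaces $X$ with $X\cap(\ell_i/S_i)\neq0$ is a proper Schubert variety, because $\ell_i/S_i$ has dimension $\nu-h$ and $\dim X+(\nu-h)\le\nu+\delta-2h+\sigma$, using $\delta>\nu$. Its size is therefore $O(q^{-1})$ times the full Grassmannian, since the codimension is at least $\delta-\nu+h-\sigma+1\ge1$.

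A subtler point is that $\ell\cap\ell_1$ could also grow through vectors of $\ell_1\cap(W+X)$ that lie in neither $\ell_1/S_1$ nor $\ell_2/S_2$ individually. We would handle this uniformly by requiring $X\cap(\ell_1+\ell_2)/W=0$ whenever this is dimensionally possible, and in general by requiring a generic intersection with $(\ell_1+\ell_2)/W$. The condition $\delta>\nu$ should ensure that the exceptional loci have positive codimension. We would also check that each $\ell$ is counted exactly once, since $T$, $S_1$ and $S_2$ are determined by $\ell$. Finally, the fibres of $\ell\mapsto(T,S_1,S_2)$ all have the same size, so no overcounting constants arise. All of these counts are polynomials in $q$ with leading coefficient $1$, which gives the $(1+o(1))$ factors.
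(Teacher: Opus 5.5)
Your proposal is essentially the paper's proof. It uses the same partition by $\sigma$ and the same three stages: choose $T$ (the paper's $A$), then $S_i$ (the paper's $H_i$) transversal to $K$ modulo $T$, then a $t$-dimensional $X$ in $V/(S_1+S_2)$ with $t=\nu-2h+\sigma$. It also uses the same union bound, showing that the bad $X$ form an $o(1)$ fraction of the $(1+o(1))q^{\delta t}$ candidates.

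The one thing to fix is your ``subtler point''. It is not a real obstacle, and the remedy you propose for it should be dropped.

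\emph{Why the worry is vacuous.} Put $W=S_1+S_2$. Since $S_2\cap\ell_1\subseteq S_2\cap K=T\subseteq S_1$, you get $W\cap\ell_i=S_i$: if $a+b\in\ell_1$ with $a\in S_1$ and $b\in S_2$, then $b\in S_2\cap\ell_1\subseteq S_1$. You implicitly used this when you treated $\ell_i/S_i$ as a $(\nu-h)$-dimensional subspace of $V/W$.

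Now let $\ell$ be the preimage of $X$. Any $v\in(\ell\cap\ell_1)\setminus S_1$ lies outside $W$, so its image is a nonzero vector of $X\cap(\ell_1+W)/W$. Conversely, a nonzero vector there lifts to some $u\in\ell_1$ with $u\in\ell\setminus W$.

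Hence $\ell\cap\ell_i=S_i$ holds exactly when $X\cap(\ell_i+W)/W=0$, for $i=1,2$. These are precisely the two Schubert conditions you already union-bounded, so nothing else can enlarge the intersection. This equivalence is the step the paper records as $(H_1+H_2)\cap U_i=H_i$.

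\emph{Why the proposed remedy fails.} Requiring $X\cap(\ell_1+\ell_2)/W=0$ is strictly stronger than needed. The space $(\ell_1+\ell_2)/W$ has dimension $2\nu-k-2h+\sigma$ inside an ambient space of dimension $\delta+t$. So for $t\geq1$ the condition is unsatisfiable whenever $2\nu-k-2h+\sigma>\delta$. For example, with $k=-1$, $h=0$, $\sigma=-1$ and $\nu<\delta<2\nu$, every $X$ meets that space, and the count would be zero. This is exactly the exceptional regime the paper cares about.

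\emph{Minor slip.} The codimension of $\{X: X\cap(\ell_i+W)/W\neq0\}$ is $\delta-\nu+h+1$, not $\delta-\nu+h-\sigma+1$. This is harmless, since both are at least $1$.
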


\begin{proof}
Let $V$ be the $(\nu+\delta+1)$-dimensional vector space underlying $\PG(\nu+\delta,q)$, and let $U_1,U_2\leq V$ be the $(\nu+1)$-dimensional vector subspaces corresponding to $\ell_1,\ell_2$. Thus
\[
 \dim(U_1\cap U_2)=k+1.
\]
For a third $\nu$-flat $\ell$, represented by a $(\nu+1)$-dimensional subspace $W\leq V$, put
\[
 A=W\cap U_1\cap U_2,
 \qquad \dim A=\sigma+1.
\]
We count such subspaces $W$ for each fixed feasible value of $\sigma$.

First choose $A$ inside $U_1\cap U_2$. The number of choices is
\[
 \qbinom{k+1}{\sigma+1}
 =(1+o(1))q^{(\sigma+1)(k-\sigma)}.
\]
Next put $H_i=W\cap U_i$ for $i=1,2$. Each $H_i$ has vector dimension $h+1$, contains $A$, and satisfies
\[
 H_i\cap(U_1\cap U_2)=A.
\]
After quotienting by $A$, one must choose an $(h-\sigma)$-dimensional subspace of $U_i/A$ disjoint from $(U_1\cap U_2)/A$. The standard formula for subspaces disjoint from a fixed subspace gives
\[
 q^{(h-\sigma)(k-\sigma)}
 \qbinom{\nu-k}{h-\sigma}
 =(1+o(1))q^{(h-\sigma)(\nu-h)}
\]
choices for each $H_i$. The two choices therefore contribute
\[
 (1+o(1))q^{2(h-\sigma)(\nu-h)}.
\]
The condition $H_i\cap(U_1\cap U_2)=A$ also implies $H_1\cap H_2=A$, so
\[
 \dim(H_1+H_2)=2h-\sigma+1.
\]
It remains to extend $H_1+H_2$ to a $(\nu+1)$-dimensional subspace $W$ without creating any additional intersection with either $U_1$ or $U_2$. In the quotient $V/(H_1+H_2)$, this amounts to choosing a subspace of dimension
\[
 \nu-2h+\sigma.
\]
Set
\[
 t=\nu-2h+\sigma
 \qquad\text{and}\qquad
 \widetilde V=V/(H_1+H_2).
\]
Then $\dim\widetilde V=\delta+t$. For $i=1,2$, let
\[
 A_i=(U_i+H_1+H_2)/(H_1+H_2)\leq\widetilde V.
\]
Because $(H_1+H_2)\cap U_i=H_i$, one has $\dim A_i=\nu-h$. The desired extension $W$ is equivalent to a $t$-dimensional subspace $X=W/(H_1+H_2)$ of $\widetilde V$ satisfying
\[
 X\cap A_1=X\cap A_2=\{0\}.
\]
If $t=0$, the unique choice $X=\{0\}$ satisfies these conditions
and contributes exactly $1=q^{\delta t}$ extension. Suppose now
that $t\geq1$.
The total number of $t$-dimensional subspaces of $\widetilde V$ is
\[
 \qbinom{\delta+t}{t}=(1+o(1))q^{\delta t}.
\]
For a fixed one-dimensional subspace $L\leq A_i$, the number of $t$-subspaces containing $L$ is
\[
 \qbinom{\delta+t-1}{t-1}\ll q^{(t-1)\delta}.
\]
There are $O_{\nu,\delta}(q^{\nu-h-1})$ possible choices for $L$. Since $\delta>\nu\geq \nu-h$, the number of $t$-subspaces meeting $A_i$ nontrivially is therefore
\[
 O_{\nu,\delta}\bigl(q^{\nu-h-1+(t-1)\delta}\bigr)=o(q^{\delta t}).
\]
The same estimate applies to $A_2$, so a union bound gives the
number of admissible $X$. Including the case $t=0$, this number is
\[
 (1+o(1))q^{\delta t}
 =(1+o(1))q^{\delta(\nu-2h+\sigma)}.
\]
Multiplying the three contributions gives
\[
 (1+o(1))q^{e_{\nu,\delta}(k,h,\sigma)}.
\]
The inequalities defining $\cS(k,h)$ express exactly the requirements that all dimensions in the preceding construction be nonnegative and that the indicated subspaces exist. Summing over the finitely many feasible values of $\sigma$ proves the asymptotic formula. The upper bound by $q^{\kappa_{kh}}$ follows immediately because the number of summands depends only on $\nu$ and $\delta$.
\end{proof}

\begin{lemma}[A useful form of the kernel]\label{lem:kernel-form}
Let $r=2\nu-\delta$. For every feasible pair $(k,h)$,
\begin{equation}\label{eq:kernel-form}
 \kappa_{kh}=\delta(\nu-h)+(h+1)(k-h)+\gamma_{kh},
\end{equation}
where
\begin{equation}\label{eq:gamma-def}
 \gamma_{kh}
 =\max_{\substack{u\in\mathbb Z\\
 \max(0,h-k)\leq u\leq\min(h+1,\nu-h,\nu-k)}}
 u(r-k+1-u).
\end{equation}
\end{lemma}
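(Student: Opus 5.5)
The identity should follow from the substitution $u=h-\sigma$ in the definition \eqref{eq:kappa-def} of $\kappa_{kh}$: under it, $e_{\nu,\delta}(k,h,\sigma)$ becomes the right-hand side of \eqref{eq:kernel-form} with $u$ in place of the maximizing variable, and $\cS(k,h)$ becomes the range of $u$ in \eqref{eq:gamma-def}. Since $\sigma\mapsto h-\sigma$ is a bijection of $\mathbb Z$, maximizing over $\sigma\in\cS(k,h)$ is then the same as maximizing over the corresponding set of $u$. The proof has two steps: an algebraic identity and a translation of the constraints.

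\emph{Step 1: the algebraic identity.} Substitute $\sigma=h-u$ into \eqref{eq:e-def}:
\[
 e_{\nu,\delta}(k,h,h-u)=(h+1-u)(k-h+u)+2u(\nu-h)+\delta(\nu-h-u).
\]
Expanding the first product gives
\[
 (h+1-u)(k-h+u)=(h+1)(k-h)+u(2h+1-k)-u^2 .
\]
Collecting the remaining terms that are linear in $u$ yields
\[
 u(2h+1-k)-u^2+2u(\nu-h)-\delta u=u\bigl(2\nu-\delta-k+1-u\bigr)=u(r-k+1-u).
\]
Hence
\[
 e_{\nu,\delta}(k,h,h-u)=\delta(\nu-h)+(h+1)(k-h)+u(r-k+1-u).
\]
The first two terms do not depend on $u$, so they can be pulled out of the maximum.

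\emph{Step 2: translating the constraints.} Each inequality defining $\cS(k,h)$ in \eqref{eq:S-def} becomes a bound on $u$:
\begin{itemize}[nosep]
 \item $\sigma\le\min(k,h)$ is equivalent to $u\ge\max(0,h-k)$;
 \item $\sigma\ge-1$ is equivalent to $u\le h+1$;
 \item $\sigma\ge k+h-\nu$ is equivalent to $u\le\nu-k$;
 \item $\sigma\ge 2h-\nu$ is equivalent to $u\le\nu-h$.
\end{itemize}
So $\sigma\in\cS(k,h)$ exactly when $u$ lies in the integer range of \eqref{eq:gamma-def}. Combining this with Step 1 gives $\kappa_{kh}=\delta(\nu-h)+(h+1)(k-h)+\gamma_{kh}$.

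\emph{Feasibility.} A pair $(k,h)$ is feasible precisely when $\cS(k,h)\neq\varnothing$. By Step 2 this is the same as the $u$-range being nonempty, so $\gamma_{kh}$ is a maximum over a nonempty finite set and is well defined.

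No step is a genuine obstacle. The only place an error could creep in is the sign bookkeeping when collecting the $u$-terms in Step 1, which comes down to checking $2h+1-k+2\nu-2h-\delta=r-k+1$.
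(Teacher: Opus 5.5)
Your proposal is correct and follows essentially the same route as the paper: substitute $u=h-\sigma$, translate the defining inequalities of $\mathcal S(k,h)$ into the stated integer range for $u$, and collect the $u$-dependent terms into $u(r-k+1-u)$. Your expansion and constraint translation both check out, and they spell out the details that the paper's proof leaves implicit.
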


\begin{proof}
Set $u=h-\sigma$. The inequalities in \eqref{eq:S-def} become
\[
 \max(0,h-k)\leq u\leq\min(h+1,\nu-h,\nu-k).
\]
Substituting $\sigma=h-u$ into \eqref{eq:e-def} and collecting the terms independent of $u$ gives
\[
 e_{\nu,\delta}(k,h,h-u)
 =\delta(\nu-h)+(h+1)(k-h)+u(2\nu-\delta-k+1-u),
\]
which is \eqref{eq:kernel-form}--\eqref{eq:gamma-def}.
\end{proof}

If $h \geq \max\{0,2\nu - \delta\}$, then $e_{\nu,\delta}(k,h,\sigma)$ is maximized for $\sigma=\min(k,h)$; this leads to the following particularly simple form of $J_{\nu,\delta}(k,h)$.

\begin{corollary}[The nonexceptional kernel]\label{cor:nonexceptional-kernel}
If $h\geq\max\{0,2\nu-\delta\}$, then
\[
 J_{\nu,\delta}(k,h)\ll
 \begin{cases}
 q^{(k-h)(h+1)+\delta(\nu-h)},&k>h,\\
 q^{\delta(\nu-h)},&k=h,\\
 q^{2(\nu-h)(h-k)+\delta(\nu-2h+k)},&k<h.
 \end{cases}
\]
\end{corollary}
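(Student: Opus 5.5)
We will deduce the corollary from \cref{prop:common-neighbor} together with the reformulation in \cref{lem:kernel-form}. By \cref{prop:common-neighbor}, $J_{\nu,\delta}(k,h)\ll q^{\kappa_{kh}}$, and by \cref{lem:kernel-form},
\[
 \kappa_{kh}=\delta(\nu-h)+(h+1)(k-h)+\gamma_{kh},
 \qquad
 \gamma_{kh}=\max_{u} u(r-k+1-u),
\]
where $r=2\nu-\delta$ and $u$ ranges over the integers with $\max(0,h-k)\leq u\leq\min(h+1,\nu-h,\nu-k)$. The whole task is to identify the maximizing $u$ under the hypothesis $h\geq\max\{0,r\}$. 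We will show that the maximizer is the smallest admissible value $u_0=\max(0,h-k)$, which corresponds to $\sigma=\min(k,h)$. Substituting it then gives the three cases.

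The key observation is that the quadratic $g(u)=u(r-k+1-u)$ is concave with vertex at $(r-k+1)/2$. It is therefore nonincreasing on integers $u\geq\lceil (r-k+1)/2\rceil$; more precisely, $g(u+1)-g(u)=r-k-2u\leq0$ whenever $2u\geq r-k$. We check that $2u_0\geq r-k$ in each case.
\begin{itemize}[leftmargin=*]
\item If $k\geq h$, then $u_0=0$, and we need $r\leq k$. This holds because $r\leq h\leq k$.
\item If $k<h$, then $u_0=h-k$, and we need $2h-2k\geq r-k$, that is, $2h-k\geq r$. This holds because $2h-k>h\geq r$.
\end{itemize}
Since $g(u+1)-g(u)$ decreases as $u$ grows, $g$ is nonincreasing on the whole admissible range, and so $\gamma_{kh}=g(u_0)$. (Feasibility guarantees that the range is nonempty and hence contains $u_0$.)

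It then remains to substitute $u_0$.
\begin{itemize}[leftmargin=*]
\item For $k>h$, $\gamma_{kh}=0$, so $\kappa_{kh}=\delta(\nu-h)+(h+1)(k-h)$.
\item For $k=h$, the middle term also vanishes, leaving $\delta(\nu-h)$.
\item For $k<h$, put $u_0=h-k$. Then
\[
 \kappa_{kh}=\delta(\nu-h)-(h+1)(h-k)+(h-k)(r+1-h),
\]
which simplifies to $\delta(\nu-h)+(h-k)(2\nu-\delta-2h)$. Regrouping gives $2(\nu-h)(h-k)+\delta(\nu-2h+k)$, as claimed.
\end{itemize}
This last algebraic check is the only place where care is needed. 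Alternatively, one can plug $\sigma=k$ directly into \eqref{eq:e-def}: the first term vanishes, and the remaining terms read off the stated exponent immediately.
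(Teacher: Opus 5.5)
Your proof is correct and follows essentially the same route as the paper. The paper differences $e_{\nu,\delta}(k,h,\sigma)$ in $\sigma$ and shows the increments are nonnegative for $\sigma<\min(k,h)$, which is exactly your monotonicity of $g(u)$ in the variable $u=h-\sigma$, followed by the same substitution at $\sigma=\min(k,h)$ (your algebra for the case $k<h$ checks out).
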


\begin{proof}
For consecutive feasible values of $\sigma$,
\begin{align*}
 &e_{\nu,\delta}(k,h,\sigma+1)-e_{\nu,\delta}(k,h,\sigma)\\
 &\hspace{2cm}=k+2h+\delta-2\nu-2\sigma-2.
\end{align*}
The hypothesis $h\geq2\nu-\delta$ implies
\[
 k+2h+\delta-2\nu-2\sigma-2
 \geq k+h-2\sigma-2.
\]
If $\sigma<\min(k,h)$, the right-hand side is nonnegative. Hence the maximum is attained at $\sigma=\min(k,h)$, although the preceding value may tie with it in a boundary case. Substituting $\sigma=h$ when $k\geq h$, and $\sigma=k$ when $k<h$, gives the three displayed exponents. The possible tie only changes the implied constant, which is why the conclusion is stated with $\ll$.
\end{proof}

% Replace the whole of Section 7 by this snippet.
% No additions to the current preamble are required.

\section{Bounds for intersecting pairs of flats}\label{sec:main-proof}

We first obtain upper bounds for the extremal pair counts
$N_{k,n,d}(M)$ defined in the preceding section.  These bounds do not
involve a point set.
The main theorem follows by inserting these pair-counting bounds into
 \cref{prop:first-cs}.

\subsection{Coordinatewise fixed-point bounds}

Work first in normalized dimensions $\delta>\nu\geq1$, and write
$M=q^m$. The coordinates below describe upper bounds for the
weighted pair counts $q^hN_h(\widetilde{\cL})$: a coordinate $x_h$
represents a candidate exponent in
$q^hN_h(\widetilde{\cL})\ll q^{x_h}$. The weight $q^h$ is the
contribution of an $h$-dimensional intersection to the squared
incidence error in \cref{prop:first-cs}. Thus the fixed-point
coordinate $\beta_h$ will bound the exponent of this weighted
quantity, rather than that of $N_h$ alone.

For $x\in\mathbb R^\nu$, extend its coordinates by
\begin{equation*}
 \overline x_{-1}=2m-1,
 \qquad \overline x_\nu=m+\nu,
 \qquad \overline x_k=x_k\quad(0\leq k<\nu).
\end{equation*}
The boundary value $2m-1$ comes from
$q^{-1}N_{-1}\leq q^{-1}M^2$, while $m+\nu$ is the exponent of
the exact diagonal contribution $q^\nu N_\nu=q^\nu M$.
Likewise, $N_h\leq M^2$ gives the cap $2m+h$ for each internal
coordinate. The second Cauchy--Schwarz step improves these caps
through the common-neighbor exponents $\kappa_{kh}$.
Define $T_m:\mathbb R^\nu\to\mathbb R^\nu$ by
\begin{equation*}
 (T_mx)_h=
 \min\left\{2m+h,\;
 \frac12\max_{\substack{-1\leq k\leq\nu\\
                        \cS(k,h)\neq\varnothing}}
 \bigl(m+\overline x_k+\kappa_{kh}+2h-k\bigr)\right\},
 \qquad 0\leq h<\nu.
\end{equation*}
Here $\kappa_{kh}$ and the feasible sets $\cS(k,h)$ are as in
\eqref{eq:kappa-def} and \eqref{eq:S-def}.

\begin{theorem}[Fixed-point bounds for intersecting pairs]
\label{thm:pair-count-fixed}
The map $T_m$ is monotone and $1/2$-contractive in the
$\ell^\infty$ norm.  Let
\[
 \beta^{\nu,\delta}(m)
 =(\beta_0^{\nu,\delta}(m),\ldots,\beta_{\nu-1}^{\nu,\delta}(m))
\]
be its unique fixed point. When the dimensions are fixed, abbreviate
$\beta_h^{\nu,\delta}(m)$ to $\beta_h(m)$, or to $\beta_h$ when
$m$ is also understood. For every admissible integer $M=q^m$,
\begin{equation}\label{eq:pair-count-fixed-bound}
 N_{h,\nu,\delta}(M)\ll q^{\beta_h(m)-h},
 \qquad 0\leq h<\nu.
\end{equation}
The implied constant is independent of $M$ and $q$.
\end{theorem}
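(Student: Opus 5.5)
The plan is to treat \cref{prop:second-cs} as an approximate sub-fixed-point inequality for $T_m$, and then use contractivity to compare any actual family with the fixed point $\beta$. Everything reduces to three facts about $T_m$, proved first, plus a short comparison argument.

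\emph{Structural properties of $T_m$.} For each $0\le h<\nu$, the coordinate $(T_mx)_h$ is the minimum of the constant $2m+h$ and half of a maximum of affine functions. Each of these affine functions has the form $m+\overline x_k+\kappa_{kh}+2h-k$. It depends on at most one internal coordinate $x_k$, with coefficient $1$; when $k=-1$ or $k=\nu$, $\overline x_k$ is a constant.
\begin{itemize}
\item \emph{Monotonicity.} Maxima and minima of nondecreasing functions are nondecreasing, so $x\le y$ coordinatewise implies $T_mx\le T_my$.
\item \emph{Half-translation inequality.} For every $c\ge0$, replacing $x$ by $x+c\mathbf 1$ raises each affine function by at most $c$. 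Hence the maximum rises by at most $c$, half of it by at most $c/2$, and taking the minimum with a constant preserves this. So
\[
 T_m(x+c\mathbf 1)\le T_mx+\tfrac c2\mathbf 1 .
\]
\item \emph{Contractivity.} Set $c=\|x-y\|_\infty$. Then $x\le y+c\mathbf 1$, and monotonicity with the previous item gives $T_mx\le T_my+\tfrac c2\mathbf 1$. By symmetry $\|T_mx-T_my\|_\infty\le\tfrac12\|x-y\|_\infty$.
\end{itemize}
Banach's fixed-point theorem on the complete space $(\mathbb R^\nu,\ell^\infty)$ then gives the unique fixed point $\beta^{\nu,\delta}(m)$.

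\emph{From the pair counts to a sub-fixed-point inequality.} Fix a family $\widetilde{\cL}$ of $M=q^m$ $\nu$-flats in $\PG(\nu+\delta,q)$. It is enough to treat a family realizing $N_{h,\nu,\delta}(M)$. Define $y\in\mathbb R^\nu$ by $y_h=\log_q(q^hN_h(\widetilde{\cL}))$. If some $N_h=0$, replace $y_h$ by a very negative number; this only helps, since all the inequalities used are upper bounds.

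The boundary values match the extension $\overline y$:
\[
 q^{-1}N_{-1}\le q^{2m-1},
 \qquad
 q^{\nu}N_\nu=q^{m+\nu}.
\]
We also have the trivial cap $q^hN_h\le q^{2m+h}$.

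Now insert \cref{prop:common-neighbor} into \cref{prop:second-cs}. The kernel $J_{\nu,\delta}(k,h)$ vanishes unless $\cS(k,h)\neq\varnothing$, so only feasible pairs appear, and we get
\[
 (q^hN_h)^2\le q^{m+2h}\sum_k N_kJ_{\nu,\delta}(k,h)
 \le C\,q^{m}\sum_{k:\ \cS(k,h)\ne\varnothing}q^{\overline y_k+\kappa_{kh}+2h-k}.
\]
Here $C=C(\nu,\delta)$. Bounding the sum by $(\nu+2)$ times its largest term, taking $\log_q$, and combining with the cap gives
\[
 y\le T_my+\varepsilon\mathbf 1,
 \qquad
 \varepsilon=\tfrac12\log_q\bigl((\nu+2)C\bigr)\ll 1 .
\]

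\emph{Comparison with $\beta$.} Let $s=\max_h(y_h-\beta_h)^+$, so that $y\le\beta+s\mathbf 1$. By monotonicity and the half-translation inequality,
\[
 y\le T_m(\beta+s\mathbf 1)+\varepsilon\mathbf 1\le\beta+\bigl(\tfrac s2+\varepsilon\bigr)\mathbf 1 .
\]
Hence $s\le \tfrac s2+\varepsilon$, i.e.\ $s\le2\varepsilon$. Exponentiating, $q^hN_h\le q^{2\varepsilon}q^{\beta_h}=(\nu+2)C\,q^{\beta_h}$. This is \eqref{eq:pair-count-fixed-bound}, with a constant independent of $M$ and $q$.

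\emph{Main obstacle.} The delicate point is the bookkeeping in the second step. One must check that the exponents in \cref{prop:second-cs}, after multiplying by $q^{2h}$, are exactly $m+\overline x_k+\kappa_{kh}+2h-k$ with the correct boundary conventions. One must also check that the implicit constants in $J\ll q^{\kappa}$ depend only on $(\nu,\delta)$. Both follow from the uniform asymptotics in \cref{prop:common-neighbor}, because the number of feasible $\sigma$ is bounded in terms of $\nu$ alone.
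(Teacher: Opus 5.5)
Your proposal is correct and is essentially the paper's argument. The paper takes the maximal ratio $R=\max\{1,\max_h N_h/Q_h\}$ with $Q_h=q^{\beta_h-h}$ and applies the fixed-point equation at a maximizing index to get $R^2\ll R$; this is your estimate $s\le s/2+\varepsilon$ written multiplicatively, with $s=\log_q R$. Your derivation of contractivity from monotonicity and the half-translation inequality also supplies the detail that the paper dismisses as immediate.
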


\begin{proof}
Monotonicity is immediate, and
\[
 \|T_mx-T_my\|_\infty\leq\tfrac12\|x-y\|_\infty,
\]
so the Banach fixed-point theorem gives a unique fixed point.
Put
\[
 Q_h=q^{\beta_h-h}\quad(0\leq h<\nu),
 \qquad Q_{-1}=M^2,\qquad Q_\nu=M.
\]
The fixed-point equation becomes
\begin{equation}\label{eq:pair-Q-fixed}
 Q_h^2=
 \min\left\{M^4,\;
 M\max_{\substack{-1\leq k\leq\nu\\
                 \cS(k,h)\neq\varnothing}}
 q^{\kappa_{kh}}Q_k\right\}.
\end{equation}
For a family $\widetilde{\cL}$ of $M$ normalized flats, abbreviate
$N_h(\widetilde{\cL})$ to $N_h$.  By
\cref{prop:second-cs,prop:common-neighbor},
\begin{equation*}
 N_h^2\ll M\sum_{\substack{-1\leq k\leq\nu\\
                          \cS(k,h)\neq\varnothing}}
 q^{\kappa_{kh}}N_k,
 \qquad N_h\leq M^2.
\end{equation*}
Let
\[
 R=\max\left\{1,\max_{0\leq h<\nu}\frac{N_h}{Q_h}\right\}.
\]
If $R>1$, choose an index $h$ attaining the maximum.  Then
$Q_h<M^2$, so the second branch in \eqref{eq:pair-Q-fixed} is active.
Since $N_k\leq RQ_k$ also at the boundary indices,
\[
 R^2Q_h^2=N_h^2
 \ll RM\sum_k q^{\kappa_{kh}}Q_k
 \ll RQ_h^2.
\]
There are at most $\nu+2$ summands, and hence $R\ll1$.  Taking the
maximum over all families proves \eqref{eq:pair-count-fixed-bound}.
\end{proof}

Returning to the original dimensions, \cref{lem:duality} gives
\begin{equation}\label{eq:pair-count-original}
N_{k,n,d}(M)
\ll
\begin{cases}
q^{\beta_k^{n,d}(m)-k},
& d>n,\quad 0\leq k<n, \\
q^{\beta_{k+d-n-1}^{d-1,n+1}(m)-(k+d-n-1)},
& d\leq n,\quad n-d+1\leq k<n.
\end{cases}
\end{equation}
The remaining layers satisfy
$$
 N_{n,n,d}(M)=M,
$$
together with
 \begin{align*}
 N_{-1,n,d}(M)&\leq M(M-1), \quad d>n,\\
 N_{n-d,n,d}(M)&\leq M(M-1), \quad d\leq n,
 \end{align*}
and
$$
 N_{k,n,d}(M)=0
 \qquad\text{for }k<n-d
$$
when $d\leq n$.

When $d=1$, there are no internal coordinates: any two distinct
$n$-flats in $\PG(n+1,q)$ meet in an $(n-1)$-flat, so
$$
 N_{n-1,n,1}(M)=M(M-1),
 \qquad
 N_{n,n,1}(M)=M,
$$
and all other pair counts vanish.  We assume $d\geq2$ until the
application to incidences below.

\subsection{Coefficient bounds for the supersolution}

Our eventual goal is to give explicit upper bounds for the fixed point
$\beta^{\nu,\delta}(m)$.  In the next subsection, for each range of
$m$, we will do this by constructing a simple vector $C$ satisfying

$$
 T_m(C)\leq C.
$$
The comparison principle will then give $\beta\leq C$, and the largest
coordinate of $C$ will recover the piecewise-linear exponent
$F_{\nu,\delta}(m)$.

The purpose of the present subsection is only to prepare the algebra
needed to verify the inequality $T_m(C)\leq C$.  Each coordinate of
$T_m$ is a maximum over intersection layers $k$, with coefficients
determined by $\kappa_{kh}$.  We show that these coefficients satisfy
two uniform estimates relative to the breakpoints $b_j$: one for
layers $k\geq j$ and one for layers $k<j$.  We also identify the
largest coefficient coming from a lower layer.  These estimates explain
the role of the breakpoints $b_j$ and widths $w_j$, and will allow all
branches of $T_m$ to be checked simultaneously in the next subsection.

Recall the definition of $\kappa_{kj}$ in \cref{eq:kappa-def}. 
Put
\begin{equation*}
 \Lambda_j=\kappa_{j-1,j},
 \qquad f_j(t)=t(r-j+1-t),
 \qquad 0\leq j<\nu.
\end{equation*}
In particular, $\Lambda_0=\kappa_{-1,0}$.

\begin{lemma}[Diagonal coefficients, adjacent layers, and breakpoints]
\label{lem:breakpoint-coefficients}
For $0\leq j<\nu$,
\begin{equation}\label{eq:diagonal-adjacent}
 \kappa_{jj}=b_j,
 \qquad
 \Lambda_j=
 \max_{\substack{-1\leq k<j\\\cS(k,j)\neq\varnothing}}
 \kappa_{kj}\leq b_j.
\end{equation}
For $1\leq j<\nu$, equality $\Lambda_j=b_j$ holds exactly when
$w_j=2$; otherwise $\Lambda_j\leq b_j-1$.  Moreover,
\begin{equation}\label{eq:adjacent-zero}
 \Lambda_0=\delta(\nu-1)+2\nu=2b_0-b_\ast,
\end{equation}
and the breakpoints satisfy
\begin{equation}\label{eq:threshold-order}
 b_j+w_j\leq b_{j-1}\quad(1\leq j\leq\nu),
 \qquad b_0\leq b_\ast\leq(\nu+1)\delta.
\end{equation}
\end{lemma}

\begin{proof}
By \cref{lem:kernel-form},
\[
 \kappa_{jj}=\delta(\nu-j)+
 \max_{0\leq t\leq\min\{j+1,\nu-j\}}f_j(t)=b_j.
\]
Indeed, $r-j+1\leq\nu-j$, so the additional constraint
$t\leq\nu-j$ cannot exclude every maximizer of $c_j$.
All maxima here and below are over integers.

For a feasible lower layer $k=j-v$, where $v\geq1$, the same formula
gives
\begin{equation}\label{eq:lower-layer-max-form}
 \kappa_{j-v,j}=\delta(\nu-j)+
 \max_{v\leq t\leq\min\{j+1,\nu-j\}}
 \bigl\{f_j(t)-v(j+1-t)\bigr\}.
\end{equation}
Increasing $v$ shrinks the interval and increases the nonnegative
penalty.  Thus the largest lower-layer coefficient occurs at $v=1$
and is at most $b_j$.  Equality requires $t=j+1$ to maximize $f_j$.
Since
\[
 f_j(j+1)-f_j(j)=r-3j,
\]
this occurs exactly when $r\geq3j$.  In that case $t=j+1$ is
feasible, since $\nu\geq r+1\geq3j+1$.  Otherwise integrality gives
a loss of at least one.  At $j=0$, \eqref{eq:lower-layer-max-form}
has only $t=1$, giving \eqref{eq:adjacent-zero}, because
$c_0=\max\{0,r\}$.

For the ordering, $b_\nu=0$, $w_\nu=1$, and
$b_{\nu-1}=\delta$.  If $1\leq j<\nu$ and $r<3j$, a maximizer
for $c_j$ can be chosen with $t\leq j$; using
$f_{j-1}(t)=f_j(t)+t$ gives $c_{j-1}\geq c_j$.  If $r\geq3j$,
both maxima are attained at their right endpoints.  Consequently,
\[
 b_{j-1}-b_j\geq\delta\quad(r<3j),\qquad
 b_{j-1}-b_j=2(\delta-\nu)+4j\quad(r\geq3j),
\]
which proves the first part of \eqref{eq:threshold-order}.
Finally, $b_0=\nu\delta+\max\{0,r\}$ and
$b_\ast=b_0+\max\{0,-r\}$ give the terminal ordering.
\end{proof}

\begin{lemma}[Coefficient comparisons]\label{lem:coefficient-comparisons}
Let $0\leq j\leq h<\nu$.  For every feasible pair $(k,h)$,
\begin{align}
 \kappa_{kh}+(j+1)(2h-k-j)&\leq b_j,
 &&j\leq k\leq\nu,\label{eq:coefficient-upper}\\
 \kappa_{kh}+2(j+1)(h-j)&\leq\Lambda_j,
 &&-1\leq k<j.\label{eq:coefficient-lower}
\end{align}
\end{lemma}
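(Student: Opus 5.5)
The plan is to work entirely in the $u$-parametrisation of \cref{lem:kernel-form}. There
\[
 \kappa_{kh}=\delta(\nu-h)+(h+1)(k-h)+\max_{u}u(r-k+1-u),
\]
with $u$ ranging over $\max(0,h-k)\le u\le\min(h+1,\nu-h,\nu-k)$. For each fixed feasible $u$, we compare the resulting expression directly with $b_j$ in the first inequality and with $\Lambda_j$ in the second. Since both sides are explicit quadratics in the integer parameters, the argument amounts to showing that certain differences are nonnegative. We fix $j\le h$ and regard the left-hand sides as functions of $(k,h,u)$. We then reduce to the extreme case $h=j$ by a monotonicity argument, after which \cref{lem:breakpoint-coefficients} finishes the job.

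\emph{Step 1 (upper layers, $k\ge j$).} Set
\[
 E(k,h,u)=\delta(\nu-h)+(h+1)(k-h)+u(r-k+1-u)+(j+1)(2h-k-j).
\]
First we show that $E$ is nonincreasing in $h$ when $k$ and $u$ are fixed. We have $\partial_h E=-\delta+k-2h-1+2(j+1)$. Since $k\le\nu<\delta$ and $h\ge j$, this derivative is at most $-\delta+\nu-2h+2j+1\le -(\delta-\nu)+1\le 0$. The feasible $u$-range only grows as $h$ decreases, because the lower bound $h-k$ shrinks; the constraints $u\le\nu-h$ and $u\le h+1$ need a little care. So we pass to $h=j$, where $E=\delta(\nu-j)+(j+1)(k-j)+u(r-k+1-u)-(j+1)(k-j)=\delta(\nu-j)+u(r-k+1-u)$. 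Because $k\ge j$, we have $u(r-k+1-u)\le u(r-j+1-u)=f_j(u)$, and $u\le j+1$. Hence $E\le\delta(\nu-j)+c_j=b_j$. To make the reduction in $h$ rigorous, we will treat each $u$ at fixed $k$, lower $h$ by one at a time, and check that $u$ stays feasible. If instead $u>h+1$ at the new $h$, we clip $u$ to $h+1$, using $f$-concavity together with the negative derivative.

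\emph{Step 2 (lower layers, $k<j$).} The argument follows the same pattern, with
\[
 E'=\kappa_{kh}+2(j+1)(h-j).
\]
Its $h$-derivative, at fixed $u$, is $-\delta+k-2h-1+2(j+1)\le -\delta+j-2h+2j+1$. With $h\ge j$ this is at most $-\delta+j+1\le 0$, since $j<\nu<\delta$. So we may again reduce to $h=j$, where $E'=\kappa_{kj}$. By \eqref{eq:diagonal-adjacent}, $\kappa_{kj}\le\Lambda_j$ for $k<j$. The lower constraint $u\ge h-k$ only relaxes as $h$ decreases, so feasibility behaves well here.

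\emph{Main obstacle.} The delicate point is the interaction between the shrinking upper constraint $u\le\min(h+1,\nu-h,\nu-k)$ and the reduction $h\to j$. A maximiser $u$ that is feasible at level $h$ might exceed $j+1$ at level $j$; it cannot violate $\nu-j$, since $\nu-j\ge\nu-h$. I would handle this by performing the reduction directly at the level of the pair $(h,u)$. Write $\sigma=h-u$. Replacing $(h,u)$ by $(h-1,u-1)$ when $u=h+1$ keeps $\sigma$ fixed. We then verify that the resulting change in $E$ is still $\le0$ using $\delta>\nu$. This is exactly the slack computation behind $b_{j-1}-b_j\ge2$ in \cref{lem:breakpoint-coefficients}, so I expect it to close by the same kind of integrality estimate.
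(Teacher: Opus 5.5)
Your descent in $h$ at fixed $u$ is a different route from the paper's, and it can be completed. As written, though, it fails at exactly the point you flag. Step~2 is also wrong to say that feasibility takes care of itself: the constraint $u\le h+1$ tightens as $h$ decreases for every $k$.

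For $k=-1$ the only feasible value is $u=h+1$. So for $h>j$ the held $u$ is never feasible for $(k,j)$, and the bound $E'(k,j,u)\le\kappa_{kj}$ is unavailable. This is not a formality. Write $E'(k,h,u)$ for $\kappa_{kh}+2(j+1)(h-j)$ with $\gamma_{kh}$ replaced by $u(r-k+1-u)$, and take $(\nu,\delta)=(3,4)$, $j=0$, $h=1$, $k=-1$. Then $E'(-1,0,2)=15>\Lambda_0=14$, although \eqref{eq:coefficient-lower} holds there ($\kappa_{-1,1}+2=10$). Similarly in Step~1, $(\nu,\delta)=(4,5)$, $j=k=0$, $h=1$, $u=2$ give $E(0,0,2)=24>b_0=23$. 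So the shift in your last paragraph is needed in both steps, and you leave it unverified (``I expect it to close'').

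It does close, and no integrality input is needed. Since $E-E'=(j+1)(j-k)$ is independent of $h$ and $u$, both satisfy, for $h\geq j+1$,
\[
 E(k,h,u)-E(k,h-1,u)=k-\delta-2h+2j+2\leq\nu-\delta<0,
 \qquad
 E(k,h,u)-E(k,h-1,u-1)=2(\nu-\delta)-2(h+u)+2j+4.
\]
The second difference is at most $2(\nu-\delta)-2j-2<0$ when $u=h+1$. The descent rule is then:
\begin{itemize}
\item keep $u$ if $u\leq h$, since it remains feasible for $(k,h-1)$;
\item replace $u$ by $u-1=h$ if $u=h+1$, which is feasible for $(k,h-1)$ because $k\geq-1$ and $h+1\leq\min(\nu-h,\nu-k)$.
\end{itemize}
At $h=j$ the tracked value is feasible. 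This gives $f_j(u)\leq c_j$ in \eqref{eq:coefficient-upper} and $\kappa_{kj}\leq\Lambda_j$ in \eqref{eq:coefficient-lower}.

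The paper instead makes the $\sigma$-preserving comparison in one stroke. For \eqref{eq:coefficient-lower} it uses $\cS(k,h)\subseteq\cS(k,j)$ and the identity $e_{\nu,\delta}(k,h,\sigma)+2(j+1)(h-j)=e_{\nu,\delta}(k,j,\sigma)-2(h-j)(\delta-\nu+h-\sigma-1)$; this is your shift applied at every level. For \eqref{eq:coefficient-upper} it substitutes $x=t-(h-j)$ for the kernel variable $t$ and obtains an explicit nonnegative difference. The price is a separate case $x<0$, handled through $t\leq\nu-k$. Your rule of shifting only when forced avoids that negative case, at the cost of an induction on $h$.
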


\begin{proof}
For \eqref{eq:coefficient-upper}, first let $k<\nu$ and write
$h=j+u$, $k=j+v$.  Fix a feasible integer $t$ in
\eqref{eq:gamma-def}, and put $x=t-u$.  After using
\cref{lem:kernel-form}, the required nonnegative difference is
\begin{align*}
 &\delta u+c_j-
 \bigl\{t(r-j-v+1-t)+u(v+j-u+1)\bigr\}\\
 &\qquad=c_j-f_j(x)+vx+2u(\delta-\nu+x+u-1).
\end{align*}
If $x\geq0$, feasibility gives $x\leq j+1$, and every term on the
last line is nonnegative.  If $x=-y<0$, then $0<y\leq u$ and
$t\leq\nu-k$ gives $v\leq\nu-j-u+y$.  Substitution bounds the
last line below by
\[
 c_j+(2u-y)(\delta-\nu+u-1)\geq0.
\]
Taking the maximum over $t$ proves the inequality for $k<\nu$.
For $k=\nu$, the only feasible value of $\sigma$ is $h$, so
\[
 \kappa_{\nu h}+(j+1)(2h-\nu-j)
 =\delta(\nu-j)-u(\delta-\nu+u-1)\leq b_j.
\]

For \eqref{eq:coefficient-lower}, write $h=j+u$ and $k<j$.
Every $\sigma\in\cS(k,j+u)$ also belongs to $\cS(k,j)$, and
\begin{align*}
 e_{\nu,\delta}(k,j+u,\sigma)+2(j+1)u
 &=e_{\nu,\delta}(k,j,\sigma)\\
 &\quad-2u(\delta-\nu+j-\sigma+u-1)\\
 &\leq e_{\nu,\delta}(k,j,\sigma).
\end{align*}
Here $\sigma\leq k<j$ and $\delta>\nu$.  Maximizing over $\sigma$
and using \eqref{eq:diagonal-adjacent} proves the claim.
\end{proof}

\subsection{Explicit bounds and the scalar exponent}

A vector $C$ satisfying $T_mC\leq C$ is a supersolution.
Monotonicity and contraction imply $\beta\leq C$, by iterating $T_m$
from $C$.  We use this observation to bound both the individual
coordinates and their scalar maximum
\begin{equation*}
 S(m)=\max\left\{m+\nu,\max_{0\leq h<\nu}\beta_h(m)\right\}.
\end{equation*}
This maximum combines the diagonal term with the weighted
intersection layers, and hence bounds the exponent of their sum in
the first Cauchy--Schwarz estimate.
For $1\leq j<\nu$, define
\begin{equation}\label{eq:H-interior}
 H_j(m)=\max\left\{
 m+b_j+j,\;\frac{3m+\Lambda_j}{2}+j,\;2m+j-1
 \right\},
\end{equation}
and define the boundary expressions by
\begin{align*}
 H_0(m)&=\max\left\{m+b_0,\frac{3m+\Lambda_0}{2}\right\},\\
 H_\nu(m)&=\max\{m+\nu,2m+\nu-1\}.
\end{align*}
The three terms in \eqref{eq:H-interior} correspond to the diagonal
branch at $j$, the adjacent layer feeding into $j$, and the cap at
$j-1$, respectively.

\begin{proposition}[Evaluation on breakpoint intervals]\label{prop:scalar-evaluation}
For $1\leq j\leq\nu$,
\begin{align}
 S(m)&=H_j(m),
 \qquad b_j\leq m\leq b_{j-1}, \label{eq:scalar-stages}\\
  S(m)&=H_0(m),
 \qquad m\geq b_0. \label{eq:scalar-terminal}
 \end{align}
In particular, on $0\leq m\leq(\nu+1)\delta$,
\begin{equation}\label{eq:F-equals-fixed}
 S(m)=F_{\nu,\delta}(m).
\end{equation}
\end{proposition}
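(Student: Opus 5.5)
Fix an interval $b_j\leq m\leq b_{j-1}$ (or $m\geq b_0$). I will prove $S(m)\leq H_j(m)$ and $S(m)\geq H_j(m)$ separately, and then read off \eqref{eq:F-equals-fixed} by comparing $H_j$ with the polygon $A_\nu,B_\nu,\dots,A_0$.

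\textbf{Upper bound.} I will construct an explicit supersolution $C=C^{(j)}(m)\in\mathbb R^\nu$, defined by
\[
 C_h=2m+h \quad (h<j),
 \qquad
 C_h=\min\{2m+h,\;H_j(m)-(j+1)(h-j)\cdot\epsilon_h\}\quad(h\geq j),
\]
with the slopes $\epsilon_h$ to be fixed. The guiding idea is that layers below $j$ are saturated at their caps. Layers at or above $j$ are governed by the diagonal branch at $j$. I expect the correct choice to be $C_h=\min\{2m+h,\;m+b_j+j+\dots\}$, chosen so that the combination $\overline C_k+\kappa_{kh}+2h-k$ matches the left-hand sides of \cref{lem:coefficient-comparisons}.

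I will then verify $T_mC\leq C$ coordinatewise by splitting the maximum over $k$ into two ranges. For $k\geq j$, inequality \eqref{eq:coefficient-upper} bounds $m+\overline C_k+\kappa_{kh}+2h-k$ by twice the target value. For $k<j$, I substitute $\overline C_k=2m+k$ (with $\overline C_{-1}=2m-1$) and use \eqref{eq:coefficient-lower}; this bounds the branch by $(3m+\Lambda_j)/2+j$ minus the appropriate linear correction in $h$. The boundary term $k=\nu$ with $\overline x_\nu=m+\nu$ is absorbed into the diagonal $m+\nu$ part of $S$. The comparison principle then gives $\beta\leq C$, hence $S(m)\leq\max\{m+\nu,\max_h C_h\}\leq H_j(m)$.

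\textbf{Lower bound.} I will show that each of the three terms of $H_j$ is attained, using monotonicity of the fixed point. Write $\beta_{j-1}\geq\ldots$ and argue as follows.
\begin{itemize}
 \item The cap term $2m+j-1$: at the fixed point, $\beta_{j-1}=\min\{2m+j-1,\ \tfrac12(\ldots)\}$. Its second branch, using only $k=j-1$ with the diagonal coefficient $\kappa_{j-1,j-1}=b_{j-1}\geq m$, is already $\geq 2m+j-1$.
 \item The diagonal term $m+b_j+j$: take $k=h=j$, giving $\beta_j\geq\min\{2m+j,\ \tfrac12(m+\beta_j+b_j+j)\}$. Solving this self-referential inequality yields $\beta_j\geq\min\{2m+j,\,m+b_j+j\}$.
 \item The adjacent term: take $k=j-1$, $h=j$, and feed in $\beta_{j-1}=2m+j-1$ from the first item. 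This gives $(3m+\Lambda_j)/2+j$.
\end{itemize}
Comparing with the upper bound, these three estimates yield equality. The endpoint cases $j=0$ and $j=\nu$ use $\overline x_{-1}=2m-1$ and $\overline x_\nu=m+\nu$ in place of the corresponding internal coordinates.

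\textbf{Identification with $F_{\nu,\delta}$.} I will use \cref{lem:breakpoint-coefficients}. If $w_j=1$, then $\Lambda_j\leq b_j-1$, so the middle term of $H_j$ never strictly wins. $H_j$ is then the maximum of a slope-$1$ and a slope-$2$ line crossing at $b_j+1$, which reproduces $A_j\to B_j\to A_{j-1}$. If $w_j=2$, then $\Lambda_j=b_j$, and the slope-$3/2$ piece runs from $b_j$ to $b_j+2$. The terminal formula follows from $\Lambda_0=2b_0-b_\ast$, and \eqref{eq:threshold-order} ensures the pieces appear in order.

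\textbf{Main obstacle.} I expect the hardest step to be choosing the supersolution profile $C_h$ for $h>j$ so that both comparison inequalities close simultaneously. My first attempt is $C_h=\min\{2m+h,\;m+b_j+j+(j+1)(h-j)\}$. If that fails, I will try a version with an intermediate $3/2$-slope piece tied to $\Lambda_j$.
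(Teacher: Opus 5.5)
Your lower-bound argument and your identification of $H_j$ with the polygon match the paper's, with two small repairs. First, in the cap bullet the $k=h=j-1$ term contains $\beta_{j-1}$ itself, so you need the same rearrangement as in your diagonal bullet to get $\beta_{j-1}\geq\min\{2m+j-1,\,m+b_{j-1}+j-1\}=2m+j-1$. Second, for the adjacent term you must check that the cap $2m+j$ does not truncate $(3m+\Lambda_j)/2+j$; this holds because $\Lambda_j\leq b_j\leq m$.

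The genuine gap is the upper bound. You never fix the supersolution, and your explicit first attempt $C_h=\min\{2m+h,\,m+b_j+j+(j+1)(h-j)\}$ is not one, for two independent reasons.

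The first reason is the level. For $m>b_j$ your candidate gives $C_j=m+b_j+j$. But with $\overline C_{j-1}=2m+j-1$, the $k=j-1$ branch of $(T_mC)_j$ equals $(3m+\Lambda_j)/2+j$. This exceeds $C_j$ once $m>2b_j-\Lambda_j$. That happens for every $m>b_j$ when $w_j=2$. When $\delta\geq2\nu$, where $\Lambda_j=b_j+r-2j$, it happens on $(b_j+\delta-2\nu+2j,\,b_{j-1}]$. By your own lower bound, no vector with $C_j<(3m+\Lambda_j)/2+j$ can dominate $\beta$.

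The second reason is the upward slope. For $\delta\geq2\nu$ and $j\leq\nu-2$ one has $\kappa_{j+1,j}\geq b_j+j+1$. Whichever branch of the minimum defines $C_{j+1}$, the $k=j+1$ branch of $(T_mC)_j$ then exceeds $C_j$ for every $m>b_j$. For instance, $\nu=3$, $\delta=7$, $j=1$, $m=15$ gives $(T_mC)_1=31>30=C_1$.

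The missing idea is to take the whole of $B=H_j(m)$ as the level and let the profile decrease: $C_h=2m+h$ for $h<j$ and $C_h=B-j(h-j)$ for $h\geq j$.
\begin{itemize}
\item The slope $-j$ is exactly what makes the $k$-dependence in \eqref{eq:coefficient-upper} cancel, since $-j(k-j)+(j+1)k-k=j^2$. For $k\geq j$ this gives $m+\overline C_k+\kappa_{kh}+2h-k\leq m+B+b_j+j-2j(h-j)\leq2C_h$, using $B\geq m+b_j+j$.
\item For $k<j$, where $\overline C_k=2m+k$, \eqref{eq:coefficient-lower} bounds the branch by $3m+\Lambda_j+2j-2j(h-j)\leq2C_h$, using $2B\geq3m+\Lambda_j+2j$. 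This is where the middle term of $H_j$ enters.
\item The $k=\nu$ branch is not ``absorbed into the $m+\nu$ part of $S$''. It occurs in every coordinate of $T_mC$ and must be checked. The paper notes $m+\nu\leq B-j(\nu-j)$, which follows from $B\geq m+b_j+j$ and $b_j\geq\delta(\nu-j)\geq(j+1)(\nu-j)$. So $\overline C_\nu$ lies on the same affine profile, and the case $k=\nu$ of \eqref{eq:coefficient-upper} applies.
\end{itemize}
The same construction with $j=0$, a constant vector at level $H_0(m)$, handles $m\geq b_0$. The caps alone handle $j=\nu$.
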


\begin{proof}
\emph{Upper bounds.}
Fix $0\leq j<\nu$, put $B=H_j(m)$, and define
\begin{equation}\label{eq:C-supersolution}
 C_h=
 \begin{cases}
  2m+h,&h<j,\\
  B-j(h-j),&h\geq j.
 \end{cases}
\end{equation}
The definition of $H_j$ gives
\begin{equation}\label{eq:H-constraints}
 B\geq m+b_j+j,
 \qquad 2B\geq3m+\Lambda_j+2j,
\end{equation}
and $B\geq2m+j-1$ if $j\geq1$.  Also
\[
 \overline C_\nu=m+\nu\leq B-j(\nu-j),
\]
because $b_j\geq\delta(\nu-j)\geq(j+1)(\nu-j)$.
Thus the upper boundary is bounded by the same affine expression
as the internal coordinates $k\geq j$.

For $h<j$, the cap gives $(T_mC)_h\leq C_h$.  For $h\geq j$,
\eqref{eq:coefficient-upper} and \eqref{eq:H-constraints} give
\[
 m+\overline C_k+\kappa_{kh}+2h-k
 \leq m+B+b_j+j-2j(h-j)\leq2C_h
 \qquad(k\geq j).
\]
Similarly, \eqref{eq:coefficient-lower} gives
\[
 m+\overline C_k+\kappa_{kh}+2h-k
 \leq3m+\Lambda_j+2j-2j(h-j)\leq2C_h
 \qquad(k<j).
\]
Hence $T_mC\leq C$.  Every coordinate of $C$, as well as $m+\nu$,
is at most $B$, so $S(m)\leq H_j(m)$.  The bound
$S(m)\leq H_\nu(m)$ follows directly from the caps.

\medskip
\noindent\emph{Matching lower bounds.}
Keeping the diagonal branch in the fixed-point equation gives
\begin{equation}\label{eq:self-loop-lower}
 \beta_h\geq\min\{2m+h,m+b_h+h\}.
\end{equation}
Indeed, if $\beta_h<2m+h$, the recursive branch is active and the
$k=h$ term can be rearranged to give $\beta_h\geq m+b_h+h$.
For $b_j\leq m\leq b_{j-1}$ with $1\leq j<\nu$, this implies
\[
 \beta_j\geq m+b_j+j,
 \qquad \beta_{j-1}=2m+j-1.
\]
The adjacent branch $k=j-1$ now gives
\[
 \beta_j\geq\frac{3m+\Lambda_j}{2}+j,
\]
since $\Lambda_j\leq b_j\leq m$ ensures that the cap does not
truncate this value.  These are the three terms of $H_j(m)$.
On $0\leq m\leq b_{\nu-1}=\delta$, the term $m+\nu$ and
\eqref{eq:self-loop-lower} at $h=\nu-1$ give $S(m)=H_\nu(m)$.
For $m\geq b_0$, the diagonal and $k=-1$ branches at $h=0$ give
\[
 \beta_0\geq m+b_0,
 \qquad \beta_0\geq\frac{3m+\Lambda_0}{2}.
\]
Neither value exceeds the cap $2m$, since $\Lambda_0\leq b_0\leq m$.
Thus $S(m)=H_0(m)$.  The ordering in \eqref{eq:threshold-order}
proves \eqref{eq:scalar-stages}--\eqref{eq:scalar-terminal}.

\medskip
\noindent\emph{Identification with the polygonal curve.}
If $w_j=1$, then $\Lambda_j\leq b_j-1$, so the middle term of
$H_j$ is at most the average of the first and third terms.  Those
two lines meet at $m=b_j+1$.  If $w_j=2$, then $\Lambda_j=b_j$;
for $m\geq b_j$, the middle term dominates the first and meets the
third at $m=b_j+2$.  Thus, on $[b_j,b_{j-1}]$, the graph has the
vertices $A_j,B_j,A_{j-1}$ from \eqref{eq:AB-def}.
The same description holds initially because
$b_\nu=0$, $w_\nu=1$, and $b_{\nu-1}=\delta$.
Finally, the two terms of $H_0$ meet at
$m=2b_0-\Lambda_0=b_\ast$ and are the two terminal pieces of $F$.
This proves \eqref{eq:F-equals-fixed}.
\end{proof}

The supersolutions give explicit bounds for each intersection layer,
not just for the scalar maximum.

\begin{corollary}[Explicit bounds for intersecting pairs]
\label{cor:explicit-pair-bounds}
Let $\delta>\nu\geq1$, $M=q^m$, and $0\leq h<\nu$.
If $1\leq j<\nu$ and $b_j\leq m\leq b_{j-1}$, then
\begin{equation}\label{eq:explicit-stage-pairs}
 N_{h,\nu,\delta}(M)
 \ll q^{\min\{2m,\,F_{\nu,\delta}(m)+j^2-(j+1)h\}}.
\end{equation}
If $b_0\leq m\leq(\nu+1)\delta$, then
\begin{equation*}
 N_{h,\nu,\delta}(M)\ll q^{F_{\nu,\delta}(m)-h}.
\end{equation*}
On $0\leq m\leq\delta$, the trivial bound $N_{h,\nu,\delta}(M)\leq M^2$
applies.  Bounds in the original dimensions follow by replacing
$h$ with $k-\rho$, as in \eqref{eq:pair-count-original}.
\end{corollary}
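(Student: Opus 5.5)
The corollary should follow by reading off the coordinates of the supersolution $C$ built in the proof of \cref{prop:scalar-evaluation}. Combine \cref{thm:pair-count-fixed} with the comparison principle $\beta\le C$ and the identity $H_j(m)=F_{\nu,\delta}(m)$ from \eqref{eq:scalar-stages}--\eqref{eq:F-equals-fixed}.

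\emph{Interior stages.} Fix $1\leq j<\nu$ and $b_j\leq m\leq b_{j-1}$. Put $B=H_j(m)=S(m)=F_{\nu,\delta}(m)$, and let $C$ be the vector \eqref{eq:C-supersolution}. We already checked that $T_mC\leq C$, so iterating $T_m$ from $C$ gives $\beta_h(m)\leq C_h$ for each $h$. \Cref{thm:pair-count-fixed} then yields $N_{h,\nu,\delta}(M)\ll q^{C_h-h}$, and we split by $h$.
\begin{itemize}
\item If $h<j$, then $C_h-h=2m$. Since $F_{\nu,\delta}(m)\ge 2m+j-1$ on this interval (the third term of $H_j$), we get $F+j^2-(j+1)h\ge 2m+j-1+j^2-(j+1)(j-1)=2m+j\ge 2m$. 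So the minimum in \eqref{eq:explicit-stage-pairs} equals $2m$ and the bound holds.
\item If $h\geq j$, then $C_h-h=B-j(h-j)-h=F_{\nu,\delta}(m)+j^2-(j+1)h$. The trivial bound $N_h\leq M^2=q^{2m}$ supplies the other term of the minimum.
\end{itemize}

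\emph{Terminal range.} For $b_0\leq m\leq(\nu+1)\delta$, apply the same construction with $j=0$. Then $C_h=B$ for all $h$, with $B=H_0(m)=F_{\nu,\delta}(m)$ by \eqref{eq:scalar-terminal} and \eqref{eq:F-equals-fixed}, so $N_h\ll q^{F-h}$.

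\emph{Small $m$ and original dimensions.} For $0\leq m\leq\delta$ only the trivial bound is claimed, so there is nothing to prove. The translation to the original dimensions comes from \cref{lem:duality} and \eqref{eq:pair-count-original}: when $d\leq n$, replace the normalized index $h$ by $k-\rho$ with $\rho=n-d+1$, and when $d>n$ the index is unchanged since $\rho=0$.

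The only point needing care is confirming that the supersolution argument in \cref{prop:scalar-evaluation} was carried out with $B$ equal to exactly $H_j(m)$, which equals $F_{\nu,\delta}(m)$ on the stated interval. This is the content of \eqref{eq:scalar-stages} together with the polygon identification. It also includes the case $j=0$, where $\Lambda_0$ and $b_\ast$ enter.
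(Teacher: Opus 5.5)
Your proposal is correct and follows essentially the same route as the paper. Apply the comparison principle to the supersolution \eqref{eq:C-supersolution} with $B=H_j(m)=F_{\nu,\delta}(m)$, which gives $\beta_h\le F_{\nu,\delta}(m)-j(h-j)$ for $h\ge j$; combine this with \cref{thm:pair-count-fixed} and the trivial cap $N_h\le M^2$; for $h<j$ use $F_{\nu,\delta}(m)\ge 2m+j-1$ (you spell out the resulting $F_{\nu,\delta}(m)+j^2-(j+1)h\ge 2m+j$), and on the terminal interval use the constant supersolution $j=0$.
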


\begin{proof}
On the $j$th interval, \eqref{eq:C-supersolution} gives
$\beta_h\leq F_{\nu,\delta}(m)-j(h-j)$ for $h\geq j$.
Combine this with \cref{thm:pair-count-fixed} and the trivial cap.
For $h<j$, the second exponent in \eqref{eq:explicit-stage-pairs}
is at least $2m$, since $F_{\nu,\delta}(m)\geq2m+j-1$.
On the terminal interval use the constant supersolution $j=0$.
\end{proof}

The coordinatewise fixed-point estimate
\eqref{eq:pair-count-fixed-bound} can be stronger than these explicit
bounds.  Also, the equality \eqref{eq:F-equals-fixed} evaluates the
recursive system; it does not assert that a family of flats attains
every fixed-point coordinate.

\subsection{Application to point--flat incidences}

\begin{proof}[Proof of \cref{thm:main}]
If $\nu=0$, then $d=1$ and $\rho+F_{0,\delta}(m)=n+m$, so the
claim is \cref{thm:vinh}. Suppose that $\nu\geq1$.
For $m>(\nu+1)\delta$, the affine continuation in the statement gives
\[
 \rho+F_{\nu,\delta}(m)-(m+nd)
 =\nu+\tfrac12\bigl(m-(\nu+1)\delta\bigr)>0.
\]
The minimum in \eqref{eq:main-incidence} is therefore $m+nd$, and
\cref{thm:vinh} again proves the claim. We may thus assume
$0\leq m\leq(\nu+1)\delta$. Put
$p=\theta_n/\theta_{n+d}$.  By \cref{prop:first-cs,lem:duality},
\begin{align*}
 \bigl|I(P,\cL)-p|P|M\bigr|^2
 &\ll |P|q^\rho\left(q^{m+\nu}
       +\sum_{h=0}^{\nu-1}q^hN_{h,\nu,\delta}(M)\right)\\
 &\ll |P|q^\rho\left(q^{m+\nu}
       +\sum_{h=0}^{\nu-1}q^{\beta_h(m)}\right)\\
 &\ll |P|q^{\rho+F_{\nu,\delta}(m)},
\end{align*}
where the last two steps use \cref{thm:pair-count-fixed} and
\eqref{eq:F-equals-fixed}.

To replace $p$ by $q^{-d}$, note that
\begin{equation*}
 F_{\nu,\delta}(m)\geq2m+\nu-\delta,
 \qquad 0\leq m\leq(\nu+1)\delta.
\end{equation*}
Before $b_0$ we have the stronger inequality $F(m)\geq2m$;
afterward use
$F(m)\geq(3m+\Lambda_0)/2$ and
$\Lambda_0=\delta(\nu-1)+2\nu$.
Since $n+d=\nu+\delta$, $|P|\ll q^{n+d}$, and
$|p-q^{-d}|\ll q^{-n-d-1}$,
\[
 |p-q^{-d}|\,|P|M
 \ll |P|^{1/2}q^{m-(\nu+\delta)/2-1}
 \ll |P|^{1/2}q^{(\rho+F_{\nu,\delta}(m))/2}.
\]
Taking the minimum with \cref{thm:vinh} proves the theorem.
\end{proof}

\subsection{The two-term corollaries}

\begin{proof}[Proof of \cref{cor:standard,cor:dual-standard}]
In either corollary, the case $i=0$ follows from
\cref{thm:kong-tamo}.  If $i=n$ in \cref{cor:standard}, or
$i=d-1$ in \cref{cor:dual-standard}, the first term is the Vinh
bound.  We may therefore exclude these cases.

For $1\leq i<\nu$ with $i\leq\delta-\nu$, put $j=\nu-i$.
Then $j\geq r$, so $c_j=0$, $w_j=1$, and $b_j=i\delta$.
The middle term of $H_j$ is redundant, and
\cref{prop:scalar-evaluation} gives
\begin{equation}\label{eq:nonexceptional-stage-bound}
 \rho+F_{\nu,\delta}(m)
 \leq\rho+H_j(m)
 =\max\{m+i\delta+n-i,\,2m+n-i-1\}.
\end{equation}
Taking square roots in the incidence bound yields
\[
 \incdev{P}{\cL}
 \ll |P|^{1/2}\left(
 q^{(i\delta+n-i)/2}M^{1/2}
 +q^{(n-i-1)/2}M\right).
\]

For \cref{cor:standard}, the remaining indices have $d>n$,
$\nu=n$, and $\delta=d$, so this is the required estimate.
For \cref{cor:dual-standard} with $d\leq n$, use
$\nu=d-1$ and $\delta=n+1$; then
$i\delta+n-i=(i+1)n$.
If instead $d>n$, the only remaining possibility is $d=n+1$ and
$i=1<\nu=n$.  Again $\delta=n+1$ and
\eqref{eq:nonexceptional-stage-bound} gives the same estimate.
\end{proof}

\section*{Use of artificial intelligence}

The authors used OpenAI's ChatGPT (GPT-5.6 and GPT-6.0)
primarily for assistance with writing and with choosing parameters
in some proofs in the applications section (\cref{sec:applications}) and in
\cref{sec:main-proof}. The basic approach and the remaining proofs were
developed entirely by the authors. The authors reviewed and
verified the AI-assisted material and take full responsibility
for the content of the paper.

\section*{Acknowledgements}
Tao Zhang is partially supported by the National Natural Science Foundation of China (Grant No.~12571357) and the Natural Science Basic Research Program of Shaanxi (Program No.~2025JC-YBMS-048).

\bibliographystyle{alpha}
\bibliography{FFIncidences}

\end{document}